\newcommand{\RR}{\mathbb{R}}
\newcommand{\NN}{\mathcal{N}}
\renewcommand{\t}{\tilde}
\renewcommand{\SS}{\mathcal{S}}
\newcommand{\hh}{\textbf{h}}
\newtheorem{Rem}[theorem]{Remark}
\title{Asymptotic shallow water models for internal waves in a two-fluid system with a free surface}
\author{Vincent Duch\^ene\thanks{D\'epartement de Math\'ematiques et Applications, UMR 8553, \'Ecole normale sup\'erieure, 45 rue d’Ulm, F 75230
Paris cedex 05, France ; e-mail: {\tt vincent.duchene@ens.fr}}}
\date{\today}
\begin{document}

\maketitle
\begin{abstract}
 In this paper, we derive asymptotic models for the propagation of two and three-dimensional gravity waves at the free surface and the interface between two layers of immiscible fluids of different densities, over an uneven bottom. We assume the thickness of the upper and lower fluids to be of comparable size, and small compared to the characteristic wavelength of the system (shallow water regimes). Following a method introduced by Bona, Lannes and Saut based on the expansion of the involved Dirichlet-to-Neumann operators, we are able to give a rigorous justification of classical models for weakly and strongly nonlinear waves, as well as interesting new ones. In particular, we derive linearly well-posed systems in the so called Boussinesq/Boussinesq regime. Furthermore, we establish the consistency of the full Euler system with these models, and deduce the convergence of the solutions.
\end{abstract}

\begin{keywords} 
Internal waves, free surface, rigid lid configuration, asymptotic models, long waves, shallow water.
\end{keywords}

\begin{AMS}
76B55, 35Q35, 35C20, 35L40.
\end{AMS}

\pagestyle{myheadings}
\thispagestyle{plain}
\markboth{V. DUCHENE}{}

\section{Introduction}
\label{Section1}
\subsection{General settings}

This paper deals with weakly and strongly nonlinear internal waves in a two-fluid system. We consider the case of uneven bottom topography and free surface, though the rigid lid assumption is mentioned. The idealized system studied here consists in two layers of immiscible, homogeneous, ideal, incompressible and irrotationnal fluids under the only influence of gravity.

The mathematical theory of internal waves, following the theory of free-surface water waves, has attracted lots of interest over the past decades. We let the reader refer to the survey article of Helfrich and Melville~\cite{HeM} for a good overview of the ins and outs on this problem. The governing equations, that we call full Euler system, are fully nonlinear, and their direct study and computation remains a real obstacle. In particular, the well-posedness of the equations in Sobolev space is challenging, as discussed in Remark~\ref{RemSurfTension}. An alternative way is to look for approximations through the use of asymptotic models. Such models can be derived from the full Euler system by introducing natural dimensionless parameters of the system, and setting some smallness hypotheses on these parameters (thus reducing the framework to more specific physical regimes).

Many models for a two-fluid system have already been derived and studied. Systems under the rigid-lid assumption have first been investigated (see~\cite{Miyata} or~\cite{Mal} for example). Weakly nonlinear models in the free-surface case have been presented by Camassa and Choi~\cite{CCa2}. Nguyen and Dias~\cite{NgD} presented a great deal of numeric simulations for such Boussinesq-type systems. Strongly nonlinear regimes have been derived by Matsuno~\cite{Mat} and Camassa and Choi~\cite{CCa}, and Barros, Gavrilyuk and Teshukov~\cite{BGT}, generalizing the classical Green-Naghdi model (see~\cite{GN}). A different approach has been carried out by Craig, Guyenne and Kalisch~\cite{CGK}, using the Hamiltonian formulation of the Euler equations. Most of these works are formal, and restricted to two-dimensional flows, or to the flat-bottom case. Finally, we refer to the work of Bona, Lannes and Saut~\cite{BLS} who, following  a strategy initiated in~\cite{BCL,BCS1,BCS2}, rigorously derived a large class of models in different regimes, under the rigid-lid assumption. This paper is concerned with the more complex case where the rigid-lid assumption is removed and replaced by a free surface. 

The strategy consists in rewriting the full system as a system of four evolution equations located on the surface and the interface between the two fluids (as opposed to two equations in the rigid-lid case). The reformulation introduces a Dirichlet-to-Neumann operator $G[\zeta]$ and an interface operator $H[\zeta]$, defined precisely below. The computation of asymptotic expansions of these operators leads to the models presented here. We focus here on shallow water regimes, allowing strongly nonlinear waves.

Our analysis gives a rigorous derivation of most of the models existing in the literature, and also interesting new ones. In particular, we derive a set of models in the Boussinesq/Boussinesq regime, with coefficients that can be chosen so that the system is linearly well-posed. We prove that the full Euler system is consistent with each of our models, which roughly means that any solution of the full system solves the asymptotic systems up to a small error. Then in the case of the shallow water/shallow water model, using energy methods together with consistency, we also prove that the solutions of our models converge toward the solutions of the full Euler system, assuming that such solution exist.

The paper is organized as follows. Section~\ref{Section1} is devoted to the reformulation of the full system, from the Euler equation to the ``Zakharov formulation'', written in dimensionless form. In \S~\ref{linearized}, we focus on the linearized system, and its dispersion relations are derived. From the asymptotic expansion of the operators $G[\zeta]$ and $H[\zeta]$ presented in \S~\ref{asymptexp}, the asymptotic models under different regimes are rigorously obtained, and presented in \S~\ref{asymmod}. The consistency of the full Euler system with each of our models is proved. Then, \S~\ref{Solapp} gives convergence results: we show that the solutions of the full Euler system tend to associated solutions of one of our models in the shallow-water limit. Finally, the links with different models already existing in the literature are presented in \S~\ref{Section3}, for rigid-lid models~\cite{BLS} and layer-mean equations~\cite{CCa,CCa2}. The proof of Proposition~\ref{G1} is given in Appendix.

\begin{figure}[htb]
\centering
\psfrag{z1}[Bc][Br]{\begin{footnotesize}$\zeta_1(t,X)$                          \end{footnotesize}}
\psfrag{z2}[Bc][Br]{\begin{footnotesize}$\zeta_2(t,X)$                          \end{footnotesize}}
\psfrag{b}{\begin{footnotesize}$b(X)$                          \end{footnotesize}}
\psfrag{h1}[Bc][Br]{\begin{footnotesize}$h_{10}$                   \end{footnotesize}}
\psfrag{h2}[Bc][Br]{\begin{footnotesize}$-h_{20}$                   \end{footnotesize}}
\psfrag{0}[Bc][Br]{\begin{footnotesize}$0$                   \end{footnotesize}}
\psfrag{z}{\begin{footnotesize}$z$                   \end{footnotesize}}
\psfrag{X}{\begin{footnotesize}$X$                   \end{footnotesize}}
\psfrag{O1}{\begin{footnotesize}$\Omega^1_t$                   \end{footnotesize}}
\psfrag{O2}{\begin{footnotesize}$\Omega^2_t$                   \end{footnotesize}}
\psfrag{n1}{\begin{footnotesize}$n_1$                          \end{footnotesize}}
\psfrag{n2}{\begin{footnotesize}$n_2$                          \end{footnotesize}}
\psfrag{nb}[Bc][Br]{\begin{footnotesize}$n_b$                          \end{footnotesize}}
\psfrag{g}{\begin{footnotesize}$\overrightarrow{\textbf{g}}$                          \end{footnotesize}}
 \includegraphics[width=1\textwidth]{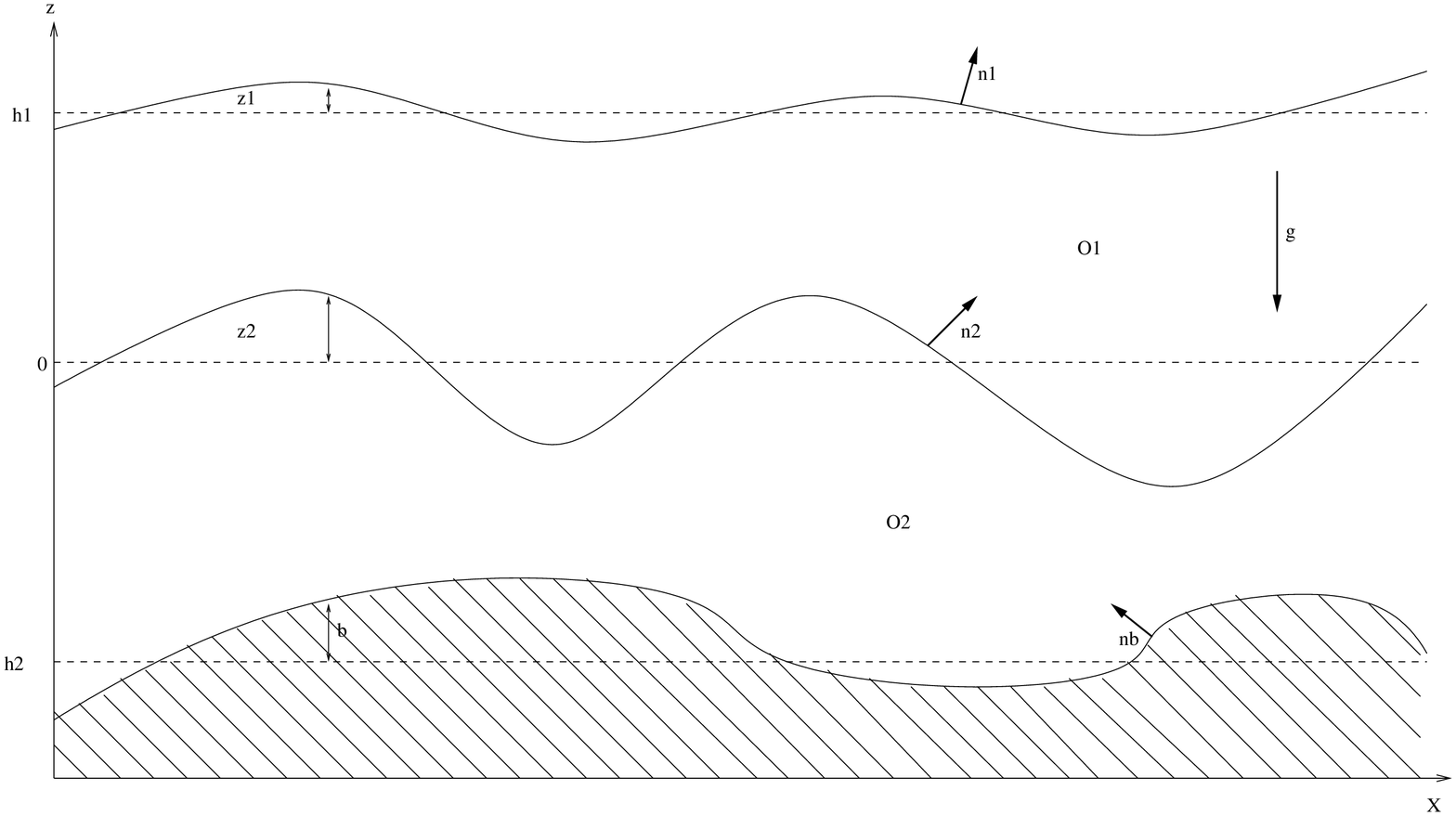}
 \caption{Sketch of the domain}
\label{Sketch}
\end{figure}

\subsection{Notation}
We use the Cartesian coordinates $(X,z)$, where $z$ is the vertical variable, and $X$ is the $d$-dimensional horizontal variable: $X=x$ when $d=1$ and $X=(x,y)$ when $d=2$. 

The symbols $\nabla$ and $\Delta$ denote the gradient and Laplace operators in the horizontal variables, respectively, whereas $\nabla_{X,z}$ and $\Delta_{X,z}$ are their $(d+1)$-variable version. For $\mu>0$, we also define the scaled version of the gradient and Laplace operators, namely ${\nabla^\mu_{X,z}:=(\sqrt\mu\nabla^T,\partial_z)^T}$ and ${\Delta^\mu_{X,z}:=\mu\Delta+\partial_z^2}$, respectively.

Given a surface $\Gamma:=\{(X,z),z=\zeta(X)\}$, we denote by $\partial_n$ the upward normal derivative at $\Gamma$:
\[\partial_n:=n\cdot\nabla_{X,z} \mbox{, with }n:=\frac{1}{\sqrt{1+|\nabla \zeta|^2}}(-\nabla\zeta,1)^T.\]
If we consider an elliptic operator $\textbf{P}=\nabla_{X,z}\cdot P\nabla_{X,z}$, then the co-normal derivative associated to $\textbf{P}$ is
\[\partial^P_n:=n\cdot P\nabla_{X,z},\]
that we simply denote $\partial_n$ when there is no risk of confusion.

For any tempered distribution $u$, we denote by $\widehat{u}$ its Fourier transform. We use the standard Fourier multiplier notation $f(D)u$, defined in terms of Fourier transforms by \[\widehat{f(D)u}:=f\widehat u.\]
The operator $\Lambda=(1-\Delta)^{1/2}$ is equivalently defined using the Fourier multiplier notation to be $\Lambda=(1+|D|^2)^{1/2}$.

We denote by $H^s(\RR^d)$ (or simply $H^s$ if the underlying domain is clear from the context) the $L^2$-based Sobolev spaces. Their norm is written $\big\vert\cdot\big\vert_{H^s}$, and simply $\big\vert\cdot\big\vert_2$ for the $L^2$ norm.

Then for $0 < T \leq \infty$, $q\in\mathbb{N}$, $W^{q,\infty}([0,T];H^s(\RR^d))$ (or simply $W^{q,\infty}H^s$, and $L^\infty H^s$ when $q=0$) denotes the space of the functions $f(t,X)$ defined on $[0,T]\times\RR^d$, whose derivative up to the order $q$ in $t$ are bounded in $H^s(\RR^d)$, uniformly with respect to $t\in [0,T)$. Their norm is written $\big\vert\cdot\big\vert_{W^{q,\infty}H^s}$.

Since it often appears, it is convenient to introduce for $s$ and $T>0$ the space $\mathbf{H}^s([0,T])$, made up of the quadruplets $(\zeta_1,\zeta_2,u_1,u_2)$ such that their components satisfy ${u_1,\ u_2 \in W^{1,\infty} ([0,T];H^{s+7/2}(\RR^d))^d}$, $\zeta_1 \in W^{1,\infty}([0,T];H^{s+3/2}(\RR^d))$ and finally ${\zeta_2 \in W^{1,\infty}([0,T];H^{s+5/2}(\RR^d))}$. Their norm is written $\big|\cdot\big|_{\mathbf{H}^s}$.

Finally, we denote by $\SS^+$ the planar strip $\RR^d\times(0,1)$, and by $\SS^-$ the planar strip $\RR^d\times(-1,0)$. We use the notation $\big\|\cdot\big\|_{H^s}$ for the usual norm of $H^s(\SS^\pm)$, and simply $\big\|\cdot\big\|_2$ for the $L^2(\SS^\pm)$ norm. We also for $s\in\RR$ and $k\in\mathbb{N}$ introduce the spaces
\[H^{s,k}(\SS^\pm)=\{f\in \mathcal D'(\overline{\SS^\pm}):\big\|f\big\|_{H^{s,k}}<\infty\},\]
where $\big\|f\big\|_{H^{s,k}}=\sum_{j=0}^{k}\big\|\Lambda^{s-j}\partial_z^j f\big\|_2$.

\subsection{The basic equations}
We assume that each fluid is irrotational and incompressible, so that we can introduce velocity potentials $\phi_i$ $(i=1,2)$ respectively associated to the upper and lower fluid layer. The velocity potentials satisfy 
\begin{equation}
 \Delta_{X,z} \phi_i=0 \mbox{ in }\Omega^i_t,
\label{laplace}
\end{equation}
where $\Omega^i_t$ denotes the domain of the fluid $i$ at time $t$ (see Figure \ref{Sketch}). Moreover, we assume the fluids to satisfy the Euler equation, and their respective density $\rho_i$ is constant, so that the velocity potentials satisfy the Bernoulli equation:
\begin{equation}
\label{Bernoulli}
 \partial_t \phi_i+\frac{1}{2} |\nabla_{X,z} \phi_i|^2=-\frac{P}{\rho_i}-gz \mbox{ in }\Omega^i_t,
\end{equation}
where $g$ denotes the acceleration of gravity and $P$ is the pressure inside the fluid. The kinematic boundary condition at the known, constant with respect to time, bottom topography ${\Gamma_b:=\{z=-h_{20}+b(X)\}}$ is given by
\begin{equation}
\label{boundarybottom}  \partial_{n}\phi_2 = 0 \mbox{ on } \Gamma_b.
\end{equation}
It is presumed that the surface and the interface are given as the graph of functions (respectively $\zeta_1(t,X)$ and $\zeta_2(t,X)$) which express the deviation from their rest position (respectively $(X,h_{10})$ and $(X,0)$) at the spatial coordinate $X$ and at time $t$. The assumption that no fluid particle crosses the surface or the interface gives the following kinematic boundary conditions:
\begin{equation}\label{boundaries} \begin{array}{rll}
  \partial_t \zeta_1 &=\sqrt{1+|\nabla\zeta_1|^2}\partial_{n}\phi_1  &\mbox{ on } \Gamma_1:=\{z=h_{10}+\zeta_1(t,X)\}, \\
  \partial_t \zeta_2 &=\sqrt{1+|\nabla\zeta_2|^2}\partial_{n}\phi_1 =\sqrt{1+|\nabla\zeta_2|^2}\partial_{n}\phi_2  &\text{ on } \Gamma_2:=\{z=\zeta_2(t,X)\}.
 \end{array}\end{equation}

Finally, we close the set of equations assuming that 
\begin{equation}
\label{pressure}
 P \mbox{ is constant at the surface, and continuous at the interface.}
\end{equation}
\begin{Rem}\label{RemSurfTension}
Unlike the water wave problem (air-water interface), the Cauchy problem associated with waves at the interface of two fluids of positive different densities is known to be ill-posed in Sobolev spaces in the absence of surface tension, as Kelvin-Helmholtz instabilities appear. However, when adding a small amount of surface tension, Lannes~\cite{Lannes6} proved that thanks to a stability criterion, the problem becomes well-posed with a time of existence that does not vanish as the surface tension goes to zero, and thus is consistent with the observations. The Kelvin-Helmholtz instabilities appear for high frequencies, where the regularization effect of the surface tension is relevant, while the main profile of the wave that we want to capture is located in lower frequencies, and is unaffected by surface tension. Therefore adding a small amount of surface tension at the interface\footnote{The study of Lannes focus on the two-layer fluid system with a rigid lid. However, we believe that the theory in the free surface case does not differ much from the one in the rigid lid configuration.} in the Euler system guarantees the well-posedness of the system and does not change our asymptotic models. For simplicity, we decide to omit this surface tension term.
\end{Rem}

\subsection{Reduction of the equations}
In~\cite{Zakharov}, Zakharov remarked that the surface wave system can be fully deduced from the knowledge of the surface elevation, and the trace of the velocity potential at the surface. We extend it here for two fluids in the free-surface case. Indeed, if we introduce the traces
\[\psi_1(t,X):=\phi_1(t,X,h_{10}+\zeta_1(t,X)) \mbox{, and } \psi_2(t,X):=\phi_2(t,X,\zeta_2(t,X)),\]
then $\phi_2$ is uniquely given as the solution of Laplace's equation~\eqref{laplace} in the lower fluid domain, with the Neumann condition~\eqref{boundarybottom} on $\Gamma_b$ and the Dirichlet condition $\phi_2=\psi_2$ on $\Gamma_2$. Then, $\phi_1$ is obtained as the solution of Laplace's equation on the upper fluid domain, with the Neumann condition given by~\eqref{boundaries} $\partial_{n}\phi_2=\partial_{n}\phi_1$ on $\Gamma_2$, and the Dirichlet condition $\phi_1=\psi_1$ on $\Gamma_1$.

Following the formalism introduced by Craig and Sulem in~\cite{CSS}, we first define the Dirichlet-Neumann operators:
\begin{eqnarray*}
 G_1[\zeta_1,\zeta_2,b](\psi_1,\psi_2)&:=&\sqrt{1+|\nabla\zeta_1|^2}\partial_{n}\phi_1{}_{|z=h_{10}+\zeta_1}, \\
 G_2[\zeta_2,b]\psi_2&:=&\sqrt{1+|\nabla\zeta_2|^2}\partial_{n}\phi_2{}_{|z=\zeta_2} .
\end{eqnarray*}
We also define the following operator:
\begin{equation*}
 H[\zeta_1,\zeta_2,b](\psi_1,\psi_2):=\nabla\phi_1{}_{|z=\zeta_2}.
\end{equation*}

Using the chain rule and the last definitions in the relation~\eqref{Bernoulli} evaluated at the surface, we obtain
\begin{equation}
\label{Bernsurf}
 \partial_t \psi_1+g(h_{10}+\zeta_1)+\frac{1}{2} |\nabla \psi_1|^2-\frac{(G_1[\zeta_1,\zeta_2,b](\psi_1,\psi_2)+\nabla\zeta_1\cdot\nabla\psi_1)^2}{2(1+|\nabla\zeta_1|^2)}=-\frac{P_1}{\rho_1},
\end{equation}
where $P_1$ is the constant pressure at the surface. Using again the Bernoulli equation for the upper and the lower fluid evaluated at the interface, we have
\begin{eqnarray}
\partial_t  ({\phi_1}_{|z=\zeta_2})+g\zeta_2+\frac{1}{2} | H[\zeta_1,\zeta_2,b](\psi_1,\psi_2)|^2&& \nonumber \\ \label{Berninterf1} -\frac{(G_2[\zeta_2,b]\psi_2+\nabla\zeta_2\cdot H[\zeta_1,\zeta_2,b](\psi_1,\psi_2))^2}{2(1+|\nabla\zeta_2|^2)}&=&-\frac{P_2}{\rho_1}, \\
\label{Berninterf2}\partial_t \psi_2+g\zeta_2+\frac{1}{2} |\nabla \psi_2|^2-\frac{(G_2[\zeta_2,b]\psi_2+\nabla\zeta_2\cdot\nabla\psi_2)^2}{2(1+|\nabla\zeta_2|^2)}&=&-\frac{P_2}{\rho_2},
\end{eqnarray}
where $P_2$ is the pressure at the interface, identical in~\eqref{Berninterf1} and~\eqref{Berninterf2}, thanks to the continuity assumption in~\eqref{pressure}.

Finally, using~\eqref{boundaries}, the gradient of the equality~\eqref{Bernsurf} and a straightforward combination of~\eqref{Berninterf1} and~\eqref{Berninterf2}, we obtain the system of equations
\begin{equation}
\label{dimsys}
\left\{ \begin{array}{lr}
 \multicolumn{2}{l}{\partial_t\zeta_1-G_1[\zeta_1,\zeta_2,b](\psi_1,\psi_2)=0,}  \\
\multicolumn{2}{l}{\partial_t\zeta_2-G_2[\zeta_2,b]\psi_2=0,}  \\
\multicolumn{2}{l}{\partial_t \nabla\psi_1+g\nabla\zeta_1+\frac{1}{2} \nabla(|\nabla \psi_1|^2)-\nabla\NN_1(\zeta_1,\zeta_2,b,\psi_1,\psi_2)=0,} \\
\multicolumn{2}{l}{\partial_t (\nabla\psi_2-\gamma H[\zeta_1,\zeta_2,b](\psi_1,\psi_2))+g(1-\gamma)\nabla\zeta_2} \\
&+\frac{1}{2} \nabla(|\nabla \psi_2|^2-\gamma |H[\zeta_1,\zeta_2,b](\psi_1,\psi_2)|^2) -\nabla\NN_2(\zeta_1,\zeta_2,b,\psi_1,\psi_2)=0,
\end{array} 
\right. 
\end{equation}
where $\gamma=\frac{\rho_1}{\rho_2}$, and \[\begin{array}{ll} \NN_1(\zeta_1,\zeta_2,b,\psi_1,\psi_2)  &= \frac{(G_1[\zeta_1,\zeta_2,b](\psi_1,\psi_2)+\nabla\zeta_1\cdot\nabla\psi_1)^2}{2(1+|\nabla\zeta_1|^2)},\\
 \NN_2(\zeta_1,\zeta_2,b,\psi_1,\psi_2)  &= \frac{(G_2[\zeta_2,b]\psi_2+\nabla\zeta_2\cdot\nabla\psi_2)^2-\gamma(G_2[\zeta_2,b]\psi_2+\nabla\zeta_2\cdot H[\zeta_1,\zeta_2,b](\psi_1,\psi_2))^2}{2(1+|\nabla\zeta_2|^2)}.\end{array}\]
This is the system of equations that we use in order to derive asymptotic models.

\subsection{Nondimensionalization of the equations}
In this subsection, we rewrite the system~\eqref{dimsys} in dimensionless variables, introducing dimensionless parameters which are crucial to study the asymptotic dynamics. We denote by $a_1$ the typical amplitude of the surface deformation, and by $a_2$ that of the interface. $\lambda$ is a characteristic horizontal length, say the wavelength of the interface. Finally, $B$ is the order of bottom topography variation.

We define the dimensionless variables 
\[\begin{array}{cccc}
 \t X:=\dfrac{X}{\lambda}, & \t z:=\dfrac{z}{h_{10}}, & \t t:=\dfrac{t}{\lambda/\sqrt{gh_{10}}}, & \t b(\t X):=\dfrac{b(X)}{B},
\end{array}\]
and the dimensionless unknowns
\[\begin{array}{cc}
 \t{\zeta_i}(\t X):=\dfrac{\zeta_i(X)}{a_i}, & \t{\psi_i}(\t X):=\dfrac{\psi_i(X)}{a_2\lambda\sqrt{g/h_{10}}}.
\end{array}\]
Five independent parameters of the system are thus added to $\gamma=\frac{\rho_1}{\rho_2}$:
\[\begin{array}{ccccc}
 \epsilon_1:=\dfrac{a_1}{h_{10}}, & \epsilon_2:=\dfrac{a_2}{h_{10}}, & \mu:=\dfrac{h_{10}^2}{\lambda^2}, & \delta:=\dfrac{h_{10}}{h_{20}}, & \beta:=\dfrac{B}{h_{10}}.
\end{array}\]
So, $\epsilon_1$ and $\epsilon_2$ are the nonlinearity parameters and $\mu$ is the shallowness parameter. We also define the convenient notation
\[\alpha:=\dfrac{a_1}{a_2}=\dfrac{\epsilon_1}{\epsilon_2}.\]

\begin{Rem}
 The scaling for nondimensionalization has been chosen considering the solutions of the linearized problem, that can be computed with the physical variables using the method of \S~\ref{linearized} (see~\cite{SeH} for example). Using such a scaling, we implicitly assume that the two layers are of similar depth (i.e. $\delta\sim 1$). Therefore, the choice of $h_{10}$ (and not $h_{20}$) as the reference vertical length and $\sqrt{gh_{10}}$ as the reference velocity are harmless. We refer for example to ~\cite{Kub,KoB} for the investigation of different situations such as the deep-water regime, or the finite-depth regime.
 
 In the same way, we decide to use the same scaling on $\psi_1$ and $\psi_2$ in order to simplify the Definitions~\ref{scale1} and~\ref{scale2}, and especially keep the relation $\partial_{n}\phi_1 =\partial_{n}\phi_2$ on the interface. We choose $a_2$ instead of $a_1$, so that the expansions of \S~\ref{opexpansion} hold for $\alpha$ tending to zero. In that way, we are able to retrieve the Shallow water/Shallow water with rigid-lid model, in \S~\ref{rigidlid}.
 
 Finally, as we choose a unique characteristic horizontal length, we only focus on the case where the internal and the surface waves have length scale of the same order, and hence do not consider phenomenons such as the resonant interaction between a long internal wave and short surface waves, as studied for example in~\cite{FuO}. Moreover, in the case of three-dimensional waves ($d=2$), a unique characteristic horizontal length means that there is no preferential horizontal direction, so that we do not study transverse waves.
\end{Rem}

We now rewrite the system in terms of dimensionless variables. First, we have to define the dimensionless operators, associated to the dimensionless fluid domains:
\begin{eqnarray*}
 \Omega_1&:=&\{(X,z)\in \RR^{d+1}, \epsilon_2{\zeta_2}(X)<z<1+\epsilon_1\zeta_1(X)\},\\
 \Omega_2&:=&\{(X,z)\in \RR^{d+1}, -\frac{1}{\delta}+\beta b(X)<z<\epsilon_2\zeta_2(X)\}.
\end{eqnarray*}
In the following, we always assume that the domains remain strictly connected, so there is a positive value $h$ such that for all $X\in\RR^d$,
\begin{equation}
\label{h}
 1+\epsilon_1\zeta_1(X)-\epsilon_2{\zeta_2}(X)\geq h>0 \qquad \mbox{and} \qquad \frac{1}{\delta}+\epsilon_2\zeta_2(X)-\beta b(X)\geq h>0.
\end{equation}

\begin{definition}\label{scale1}
 Let $\zeta_2$ and $b \in W^{1,\infty}(\RR^d)$, such that $\Omega_2$ satisfies~\eqref{h}, and suppose that $\nabla\psi_2 \in H^{1/2}(\RR^d)$. Then with $\phi_2$ the unique solution in $H^{2}(\Omega_2)$ of the boundary value problem
\begin{equation}\label{phi2}\left\{\begin{array}{ll}
 \Delta^\mu_{X,z}\phi_2=0 & \mbox{in } \Omega_2, \\
\phi_2 =\psi_2 & \mbox{on } \Gamma_2:=\{z=\epsilon_2 \zeta_2\}, \\
 \partial_{n}\phi_2 =0 & \mbox{on } \Gamma_b:=\{z=-\frac{1}{\delta}+\beta b\}, \\

\end{array}
\right.\end{equation}
we define $G_2^{\mu,\delta}[\epsilon_2\zeta_2,\beta b]\psi_2 \in H^{1/2}(\RR^d)$ by
\[G_2^{\mu,\delta}[\epsilon_2\zeta_2, \beta b]\psi_2:=-\mu\epsilon_2\nabla\zeta_2\cdot \nabla\phi_2{}_{|z=\epsilon_2\zeta_2}+\partial_z\phi_2{}_{|z=\epsilon_2\zeta_2}.\]
\end{definition}
\begin{definition}\label{scale2}
 Let now $\zeta_1$, $\zeta_2$, and $b \in W^{1,\infty}(\RR^d)$ be such that $\Omega_1$ and $\Omega_2$ satisfy~\eqref{h}, and suppose $\nabla\psi_1$, $\nabla\psi_2 \in H^{1/2}(\RR^d)$. Let $\phi_1$ be the unique solution in $H^{2}(\Omega_2)$ of the boundary value problem
\begin{equation}\label{phi1}\left\{\begin{array}{ll}
 \Delta^\mu_{X,z}\phi_1=0 & \mbox{in } \Omega_1, \\
\phi_1 =\psi_1 & \mbox{on } \Gamma_1:=\{z=1+\epsilon_1 \zeta_1\}, \\
 \partial_{n}\phi_1 =\frac{1}{\sqrt{1+\mu\epsilon_2^2|\nabla \zeta_2|^2}}G_2^{\mu,\delta}[\epsilon_2\zeta_2, \beta b]\psi_2 & \mbox{on } \Gamma_2.
\end{array}
\right.\end{equation}
Then we define $G_1^{\mu,\delta}[\epsilon_1\zeta_1,\epsilon_2\zeta_2,\beta b](\psi_1,\psi_2) \in H^{1/2}(\RR^d)$ by
\[G_1^{\mu,\delta}[\epsilon_1\zeta_1,\epsilon_2\zeta_2,\beta b](\psi_1,\psi_2):=-\mu\epsilon_1\nabla\zeta_1\cdot \nabla\phi_1{}_{|z=1+\epsilon_1\zeta_1}+\partial_z\phi_1{}_{|z=1+\epsilon_1\zeta_1},\]
and $H^{\mu,\delta}[\epsilon_1\zeta_1,\epsilon_2\zeta_2,\beta b](\psi_1,\psi_2) \in H^{1/2}(\RR^d)$ by
\[H^{\mu,\delta}[\epsilon_1\zeta_1,\epsilon_2\zeta_2, \beta b](\psi_1,\psi_2) =(\nabla \phi_1)_{|z=\epsilon_2\zeta_2} .\]
\end{definition}
In the following, when there is no possibility of mistake, we simply write:
\begin{eqnarray*}
 G_2\psi_2&:=&G_2^{\mu,\delta}[\epsilon_2\zeta_2, \beta b]\psi_2, \\
 G_1(\psi_1,\psi_2)&:=&G_1^{\mu,\delta}[\epsilon_1\zeta_1,\epsilon_2\zeta_2,\beta b](\psi_1,\psi_2), \\
 H(\psi_1,\psi_2)&:=& H^{\mu,\delta}[\epsilon_1\zeta_1,\epsilon_2\zeta_2, \beta b](\psi_1,\psi_2).
\end{eqnarray*}
\begin{Rem}
 The existence and uniqueness of such solutions $\phi_2$ and $\phi_1$ are given by Proposition~\ref{flattening}.
\end{Rem}

Using these last definitions, it is straightforward to check that the system~\eqref{dimsys} becomes in dimensionless variables (where we omit the tildes for the sake of clarity):
\begin{equation}
\label{sys}
\left\{ \begin{array}{l}
\alpha\partial_{ t}{\zeta_1}-\frac{1}{\mu}G_1(\psi_1,\psi_2)=0,  \\
\partial_{ t}{\zeta_2}-\frac{1}{\mu}G_2\psi_2=0,  \\
\partial_{ t} \nabla{\psi_1}+\alpha\nabla{\zeta_1}+\frac{\epsilon_2}{2} \nabla(|\nabla {\psi_1}|^2)=\mu\epsilon_2\nabla\NN_1, \\
\partial_{ t} (\nabla{\psi_2}-\gamma H(\psi_1,\psi_2))+(1-\gamma)\nabla{\zeta_2} +\frac{\epsilon_2}{2} \nabla(|\nabla {\psi_2}|^2-\gamma |H(\psi_1,\psi_2)|^2) =\mu\epsilon_2\nabla\NN_2,
\end{array} 
\right. 
\end{equation}
where \begin{eqnarray*}\NN_1  &:=& \dfrac{(\frac{1}{\mu}G_1(\psi_1,\psi_2)+\epsilon_1\nabla{\zeta_1}\cdot\nabla{\psi_1})^2}{2(1+\mu|\epsilon_1\nabla{\zeta_1}|^2)},\\
 \NN_2 &:=& \dfrac{(\frac{1}{\mu}G_2\psi_2+\epsilon_2\nabla{\zeta_2}\cdot\nabla{\psi_2})^2-\gamma(\frac{1}{\mu}G_2\psi_2+\epsilon_2\nabla{\zeta_2}\cdot H(\psi_1,\psi_2))^2}{2(1+\mu|\epsilon_2\nabla{\zeta_2}|^2)}.       
      \end{eqnarray*}
We derive the asymptotic models from this system of non-dimensionalized equations, corresponding to different sizes for the dimensionless parameters.

\subsection{The linearized equation}
\label{linearized}
Linearizing the system~\eqref{sys} around the rest state, we obtain
\begin{equation}
\label{linsys}
\left\{ \begin{array}{l}
\alpha\partial_t \zeta_1-\frac{1}{\mu}G_1^{\mu,\delta}[0,0,0](\psi_1,\psi_2)=0,  \\
\partial_t\zeta_2-\frac{1}{\mu}G_2^{\mu,\delta}[0,0]\psi_2=0,  \\
\partial_t \nabla\psi_1+\alpha\nabla\zeta_1=0, \\
\partial_t (\nabla\psi_2-\gamma H^{\mu,\delta}[0,0,0](\psi_1,\psi_2))+(1-\gamma)\nabla\zeta_2 =0.
\end{array} 
\right. 
\end{equation}
Now, when the surface, the interface and the bottom are flat, we have explicit expressions for the operators $G_1$, $G_2$ and $H$. Indeed, taking the horizontal Fourier transform of the Laplace equations in~\eqref{phi2} and~\eqref{phi1}, we obtain that $\widehat{\phi_2}$ and $\widehat{\phi_1}$ are solutions of the following ordinary differential equations:
\[-\mu|D|^2y+y''=0.\]
Then, using the boundary conditions, we deduce
\[
 \phi_2(X,z)= \cosh(\sqrt{\mu}|D|z)\psi_2(X)+\tanh(\frac{\sqrt{\mu}}{\delta}|D|)\sinh(\sqrt{\mu}|D|z)\psi_2(X),
\]
so that we have
\[G_2^{\mu,\delta}[0,0]\psi_2=\sqrt{\mu}|D|\tanh(\frac{\sqrt{\mu}}{\delta}|D|)\psi_2.\]
Then we obtain
\begin{eqnarray*}
  \phi_1(X,z)=&&\cosh(\sqrt{\mu}|D|z)\big(\frac{1}{\cosh(\sqrt{\mu}|D|)}\psi_1(X)-\tanh(\frac{\sqrt{\mu}}{\delta}|D|)\tanh(\sqrt{\mu}|D|)\psi_2(X) \big)\\
&&\quad +\tanh(\frac{\sqrt{\mu}}{\delta}|D|)\sinh(\sqrt{\mu}|D|z)\psi_2(X),
\end{eqnarray*}
so that we have
\[G_1^{\mu,\delta}[0,0,0](\psi_1,\psi_2)=\frac{\sqrt{\mu}|D|}{\cosh(\sqrt{\mu}|D|)}\left(\sinh(\sqrt{\mu}|D|)\psi_1+\tanh(\frac{\sqrt{\mu}}{\delta})\psi_2\right),\]
and finally
\[H^{\mu,\delta}[0,0,0](\psi_1,\psi_2)=\frac{1}{\cosh(\sqrt{\mu}|D|)}\nabla\psi_1(X)-\tanh(\frac{\sqrt{\mu}}{\delta}|D|)\tanh(\sqrt{\mu}|D|)\nabla\psi_2(X).\]

Using these expressions in the system~\eqref{linsys}, we can easily calculate the dispersion relations. Indeed, the wave frequencies $\omega_\pm^2(k)$, corresponding to plane-wave solutions $e^{ik\cdot X-i\omega_\pm t}$, are the solutions of the quadratic equation
\begin{equation}
\label{lineq} \omega^4-\frac{|k|}{\sqrt{\mu}}\dfrac{\tanh(\sqrt{\mu}|k|)+\tanh(\frac{\sqrt{\mu}}{\delta}|k|)}{1+\gamma\tanh(\sqrt{\mu}|k|)\tanh(\frac{\sqrt{\mu}}{\delta}|k|)}\omega^2+\frac{|k|^2}{\mu}\dfrac{(1-\gamma)\tanh(\sqrt{\mu}|k|)\tanh(\frac{\sqrt{\mu}}{\delta}|k|)}{1+\gamma\tanh(\sqrt{\mu}|k|)\tanh(\frac{\sqrt{\mu}}{\delta}|k|)}=0.
\end{equation}
This equation has two strictly positive solutions (and their opposite) if and only if $\gamma<1$, corresponding to the case wherein the lower fluid is heavier than the upper one. This expression also appears in~\cite{CGK} and~\cite{PeS}. The figure~\ref{dispfullsys} shows the evolution of the wave frequencies $\omega_\pm$, $-\omega_\pm$, as functions of the wave number $k$. We chose the parameters $\mu=0.1$, $\delta=1/3$, $\gamma=2/3$.

\begin{figure}[htb]
\centering
\psfrag{k}{\begin{footnotesize}$k$                          \end{footnotesize}}
\psfrag{w}{\begin{footnotesize}$\omega$                          \end{footnotesize}}
\psfrag{wmmm}{\begin{footnotesize}$\pm\omega_-$                          \end{footnotesize}}
\psfrag{wppp}{\begin{footnotesize}$\pm\omega_+$                          \end{footnotesize}}
 \includegraphics[width=0.7\textwidth]{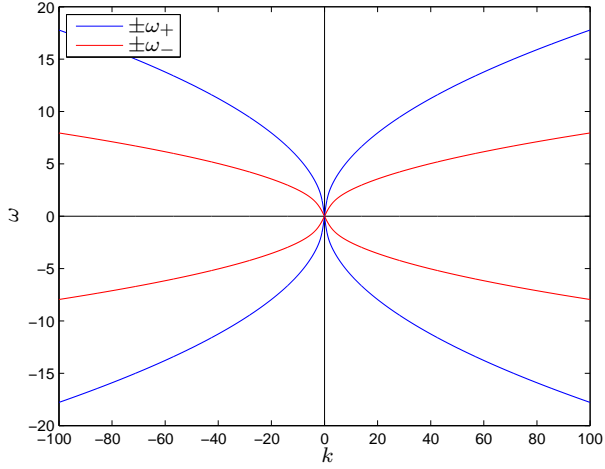}
 \caption{Full system dispersion}
\label{dispfullsys}
\end{figure}

\begin{Rem}
 We remark that setting $\gamma = 0$, and $\delta = 1$, one recovers the classical dispersion relation for the one-fluid system:
 \[\omega^2=\frac{|k|}{\sqrt{\mu}}\tanh(\sqrt{\mu}|k|).\]
\end{Rem}

\section{Asymptotic Models}
\label{asymptexp}
We derive asymptotic models for the system, by obtaining explicit expansions of the operators. Following the method of~\cite{BLS}, it is convenient to first reduce the problems~\eqref{phi2} and~\eqref{phi1} to elliptic equations on a flat strip.
\subsection{Flattening of the domain}
We define the mappings
\begin{eqnarray*}
 R_1&:= \begin{array}{ccc}
 \Omega_1 & \rightarrow & \SS^+ \\
 (X,z) & \mapsto & (X,r_1(X,z))
\end{array} &\mbox{ with } r_1(X,z):=\dfrac{z-\epsilon_2\zeta_2(X)}{1+\epsilon_1\zeta_1(X)-\epsilon_2\zeta_2(X)}, \\
 R_2&:=\begin{array}{ccc}
  \Omega_2 & \rightarrow & \SS^- \\
 (X,z) & \mapsto & (X,r_2(X,z))
\end{array} &\mbox{ with } r_2(X,z):=\dfrac{z-\epsilon_2\zeta_2(X)}{1/\delta-\beta b(X)+\epsilon_2\zeta_2(X)},
\end{eqnarray*}
and denote their inverse
\[
 S_1:= \begin{array}{ccc}
 \SS^+ & \rightarrow & \Omega_1 \\
 (  X,  z) & \mapsto & (  X,s_1(  X,  z))
\end{array} \ \mbox{ and } \ \
 S_2:=\begin{array}{ccc}
\SS^- & \rightarrow & \Omega_2 \\
 (  X,  z) & \mapsto & (  X,s_2(  X,  z))
\end{array}  ,
\]
with
\begin{eqnarray*}
 s_1(  X,  z)&:=&(1+\epsilon_1\zeta_1(  X)-\epsilon_2\zeta_2(  X))  z +\epsilon_2\zeta_2(  X), \\
 s_2(  X,  z)&:=&(1/\delta-\beta b(  X)+\epsilon_2\zeta_2(  X))  z +\epsilon_2\zeta_2(  X).
\end{eqnarray*}
Introducing the $(d+1)\times(d+1)$ matrices
\begin{eqnarray}\label{Pi} {P_i}&:=&\frac{1}{\partial_{  z}s_i}
\left(\begin{array}{cc}  \partial_{  z}s_i I_d & 0_{d,1} \\ -{\nabla_{  X}s_i}^T & 1 \end{array}\right)
\left(\begin{array}{cc}  \mu I_d & 0_{d,1} \\ 0_{1,d} & 1 \end{array}\right)
\left(\begin{array}{cc}  \partial_{  z}s_i I_d & -\nabla_{  X}s_i \\ 0_{1,d} & 1 \end{array}\right)\nonumber \\
&=&\left(\begin{array}{cc}  \mu \partial_{  z}s_i I_d & -\mu\nabla_{  X}s_i \\ -\mu{\nabla_{  X}s_i}^T & \frac{1+\mu|\nabla_{  X}s_i|^2}{\partial_{  z}s_i} \end{array}\right),\end{eqnarray}
where $0_{m,n}$ is the $m\times n$ zero matrix and $I_d$ the $d\times d$ identity matrix, we can transform the Laplace equations~\eqref{phi2} and~\eqref{phi1} into elliptic boundary value problems on flat strips. 
\begin{proposition} \label{flattening}
 Let $\zeta_1$, $\zeta_2$, and $b \in W^{1,\infty}(\RR^d)$, such that $\Omega_1$ and $\Omega_2$ satisfy~\eqref{h}, and suppose $\nabla\psi_1$, $\nabla\psi_2 \in H^{1/2}(\RR^d)$. Then there exists a unique solution $\phi_1\in H^2(\SS^+)$ and $\phi_2\in H^2(\SS^-)$ to the following boundary value problems
\begin{equation}\label{tphi2}\left\{\begin{array}{ll}
 \nabla_{  X,  z}\cdot P_2 \nabla_{  X,  z} {\phi_2}=0 & \mbox{in } \SS^-, \\
 {\phi_2} =\psi_2 & \mbox{on } \{  z=0\}, \\
 \partial_n {\phi_2}=0 & \mbox{on } \{z=-1\}, \\

\end{array}
\right.\end{equation}
and
\begin{equation}\label{tphi1}\left\{\begin{array}{ll}
 \nabla_{  X,  z}\cdot P_1 \nabla_{  X,  z} {\phi_1}=0 & \mbox{in } \SS^+, \\
{\phi_1} =\psi_1 & \mbox{on } \{  z=1\}, \\
 \partial_n {\phi_1} =\partial_n{\phi_2} & \mbox{on } \{  z=0\},
\end{array}
\right.\end{equation}
where $\partial_n\phi$ stands for the upward co-normal derivative associated to the elliptic operator involved: \[\partial_n\phi:=e_{d+1}\cdot P\nabla_{X,z}\phi.\]
Moreover, $\t{\phi_i}:=(  X,  z)\mapsto \phi_i(  X,r_i(  X,  z))$ $(i=1,2)$ respectively solve the problems~\eqref{phi1} and~\eqref{phi2}. Thus, the operators $G_1$, $G_2$ and $H$ can equivalently be defined with 
\begin{eqnarray*}
G_2 {\psi_2}&=&e_{d+1}\cdot   P_2\nabla_{  X,  z} {\phi_2} {}_{|  z=0}, \\
G_1( \psi_1, \psi_2)&=&e_{d+1}\cdot P_1\nabla_{  X,  z} {\phi_1} {}_{|  z=1}, \\
H( \psi_1, \psi_2)&=&\nabla  {\phi_1} {}_{|  z=0}.
\end{eqnarray*}
\end{proposition}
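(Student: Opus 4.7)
The plan is to transfer the Laplace problems from the curved domains $\Omega_i$ to the flat strips $\SS^\pm$ via the diffeomorphisms $S_i$ and then solve the resulting elliptic problems sequentially, first on $\SS^-$ for $\phi_2$, then on $\SS^+$ for $\phi_1$.

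First, I would verify the change-of-variables identity by direct computation. If $\tilde\phi_i(X,z):=\phi_i(X,r_i(X,z))$, then the chain rule gives $\nabla_{X,z}\tilde\phi_i = J_i^T \nabla_{X,z}\phi_i$, where $J_i$ is the Jacobian of $S_i$. A short computation shows that $\nabla_{X,z}\cdot P_i\nabla_{X,z}\phi_i = (\partial_z s_i)\,\Delta^\mu_{X,z}\tilde\phi_i$, which establishes the equivalence of the interior equations. The transformation of the boundary conditions is immediate for the Dirichlet traces on $\{z=0,1\}$ and for the bottom, and for the Neumann/co-normal conditions one checks that $e_{d+1}\cdot P_i\nabla_{X,z}\phi_i$ at $z=0,\pm 1$ equals $\sqrt{1+\mu|\nabla(\epsilon_2\zeta_2)|^2}\,\partial_n\tilde\phi_i$ (resp. at the top/bottom), up to the scaling factor that is absorbed in the stated matching condition on $\{z=0\}$. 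The same calculation yields the flat-strip formulas for $G_1$, $G_2$, and $H$.

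Second, I would establish uniform ellipticity of $P_i$. The factorization in (2.1) presents $P_i$ as a congruence of $\mathrm{diag}(\mu I_d,1)$ by the Jacobian of $S_i$, scaled by $1/\partial_z s_i$. Using the non-cavitation assumption \eqref{h}, $\partial_z s_1=1+\epsilon_1\zeta_1-\epsilon_2\zeta_2\geq h$ and $\partial_z s_2=1/\delta-\beta b+\epsilon_2\zeta_2\geq h$, so $\partial_z s_i$ is bounded below by a positive constant. Combined with the $W^{1,\infty}$ control on $\zeta_i$ and $b$, a direct computation of $\xi^T P_i\xi$ yields a lower bound of the form $c(h,\mu,|\zeta_i|_{W^{1,\infty}},|b|_{W^{1,\infty}})\bigl(\mu|\xi_X|^2+|\xi_z|^2\bigr)$, together with the matching upper bound for continuity.

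Third, I would solve the two BVPs sequentially by Lax--Milgram. For \eqref{tphi2}, since only $\nabla\psi_2\in H^{1/2}$ is controlled, I would pick a lift $\underline{\psi}_2$ of $\psi_2$ on $\SS^-$ satisfying $\underline{\psi}_2|_{z=0}=\psi_2$, $\partial_z\underline{\psi}_2|_{z=-1}=0$, and $\nabla_{X,z}\underline{\psi}_2\in H^1(\SS^-)$ — for instance, by extending via the Poisson kernel and multiplying by a cutoff. The difference $u=\phi_2-\underline{\psi}_2$ then solves a homogeneous Dirichlet/Neumann problem on $\SS^-$ with source $f=-\nabla_{X,z}\cdot P_2\nabla_{X,z}\underline{\psi}_2\in L^2(\SS^-)$, in the Hilbert space $V^-:=\{u\in H^1(\SS^-):u|_{z=0}=0\}$ on which the Poincaré inequality in $z$ holds. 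Coercivity of $a(u,v)=\int_{\SS^-} P_2\nabla u\cdot\nabla v$ follows from the previous step, so Lax--Milgram gives existence and uniqueness. Standard elliptic regularity then upgrades $u$ to $H^2$ since the coefficients are $W^{1,\infty}$. The solution of \eqref{tphi1} is treated in the same way: the Neumann datum on $\{z=0\}$ is $\partial_n\phi_2|_{z=0}\in H^{1/2}(\RR^d)$, which I would incorporate either as an inhomogeneous boundary term in the variational formulation on $V^+:=\{u\in H^1(\SS^+):u|_{z=1}=0\}$ or absorb into a second lift.

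The principal technical obstacles are twofold: handling the fact that $\psi_i$ is only controlled through $\nabla\psi_i$ (forcing one to work with a suitable lift rather than $\psi_i$ itself) and tracking the $H^2$ regularity up to the interface $\{z=0\}$, where a Dirichlet boundary for one problem abuts a Neumann boundary for the other. The regularity step near this flat interface is nevertheless standard once the coefficients of $P_i$ are shown to belong to $W^{1,\infty}$, which is inherited from the hypotheses on $\zeta_1,\zeta_2,b$.
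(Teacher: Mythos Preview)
Your proposal is correct and follows essentially the same route as the paper: flatten via $S_i$, check coercivity of $P_i$ from the non-cavitation assumption~\eqref{h} and the $W^{1,\infty}$ bounds, reduce to homogeneous boundary data by a lift, and apply Lax--Milgram on the space $\{u\in H^1:u=0\text{ on the Dirichlet face}\}$. The only notable difference is that the paper carries out the $H^2$ upgrade explicitly via tangential difference quotients (Nirenberg's method) to get $\partial_x u,\partial_y u\in H^1$, and then recovers $\partial_z^2 u\in L^2$ from the equation and coercivity, whereas you invoke ``standard elliptic regularity''; either is fine here since the strip is flat and the coefficients are $W^{1,\infty}$.
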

\begin{proof}
The reduction of the problems~\eqref{phi1} and~\eqref{phi2} on the flat strip can be found on~\cite{Lannes1} (Proposition 2.7).
The coercivity condition is satisfied thanks to~\eqref{h} and the assumptions on $\zeta_1$, $\zeta_2$ (see Proposition 2.3 of~\cite{Lannes2}): 
\begin{equation}\label{asP}\exists k>0, \forall \Theta \in \RR^{d+1},
\Theta\cdot P_i\Theta \geq \frac{1}{k} \big\vert \Theta\big\vert ^2.\end{equation}

Thus, we just prove here the existence and uniqueness of the $H^2$-solutions $\phi_i$ $(i=1,2)$.
Since for all $g\in H^{-1/2}(\RR^d),h\in H^{1/2}(\RR^d)$, one can easily construct a function $w\in H^1(\SS^+)$ such that $w_{|z=1}=h$ and $\partial_n w_{|z=0}=g$,~\eqref{tphi1} and~\eqref{tphi2} clearly reduce to the following problem
\begin{equation}\label{elliptic}\left\{\begin{array}{ll}
 \nabla_{X,z}\cdot P \nabla_{  X,  z} {\phi_1}=f & \mbox{in } \SS^+, \\
{\phi_1} =0 & \mbox{on } \Gamma_1:=\RR^d\times\{1\}, \\
 \partial_n {\phi_1} =0 & \mbox{on } \Gamma_2:=\RR^d\times\{0\},
\end{array}
\right.\end{equation}
where $f\in H^{-1}(\SS^+)$ and $P$ satisfies~\eqref{asP}.

As a first step, we introduce the variational formulation of this problem. Let us define the functional space
\[V:=\{v\in H^1(\SS^+), \gamma_0(v)=0 \mbox{ on } \RR^d\},\]
with $\gamma_0:H^1(\SS^+) \rightarrow H^{1/2}(\RR^d)$ the trace operator on $\Gamma_1$. Since $\gamma_0$ is continuous, $V$, equipped with the scalar product of $H^1(\SS^+)$ and the corresponding norm, is a closed subspace of $H^1(\SS^+)$, hence a Hilbert space. A solution of the variational problem related to~\eqref{elliptic} is then a function $u \in V$ such that
\begin{equation} \label{var}
 \forall v\in V, \int_{\SS^+}P\nabla u\cdot\nabla v = -\int_{\SS^+} fv.
\end{equation}
Since $\mathcal V =\{v\in \mathcal D(\bar\SS^+),v=0 \mbox{ on } \Gamma_1\}$ is dense in $V$, a solution of the variational problem~\eqref{var} is a weak solution of the problem~\eqref{elliptic}.

Now we can check that $a(u,v):=\int_{\SS^+}P\nabla u\cdot\nabla v$ is a continuous bilinear form. The coercivity of $a$ is given by~\eqref{asP} and a generalized Poincar\'e inequality (see~\cite{ABM}, Theorem 5.4.3). Finally, since $b:v\in V \mapsto -\int_{\SS^+} fv$ is clearly continuous, the Lax-Milgram Theorem gives the existence and uniqueness of a solution $u\in V$ of~\eqref{var}, and thus a weak solution of~\eqref{elliptic}. Moreover, one has
\[\big\|u\big\|_{H^1} \leq C \big\|f\big\|_{H^{-1}}.\]

The last step consists in proving that the solution $u$ lives in $H^2(\SS^+)$, if we assume that $f\in L^2$. We introduce for $h>0$, 
\[u_h:=(x,y,z)\mapsto \frac{\tau_h u(x,y,z)-u(x,y,z)}{h}=\frac{u(x+h,y,z)-u(x,y,z)}{h}.\]
Then $u_h$ is the solution~\eqref{elliptic} with $f_h=\frac{\tau_h f-f}{h}$ and $g_h=\frac{\tau_h g-g}{h}$, so that
\[\big\|u_h\big\|_{H^1} \leq C\big\|f_h\big\|_{H^{-1}}.\]
Then we remark that for any $v\in H^{1}(\SS^+)$, $v_h(x,y,z)=\frac{1}{h}\int_x^{x+h} \partial_x v(t,y,z) dt$, so that
\[\big\|v_h\big\|_{L^2}\leq  \frac{1}{h}\int_0^{h} \big\|\partial_x v\big\|_{L^2} dt\leq\big\| v\big\|_{H^1}.\]
Thus, one has thanks to the duality between $H^1$ and $H_0^1$,
\[ \big\|f_h\big\|_{H^{-1}} \leq \sup_{v\in H_0^1} \frac{|(f_h,v)|}{\big\| v\big\|_{H^1}} 
\leq \sup_{v\in H_0^1} \frac{\big\|f\big\|_{L^2}\big\|v_h\big\|_{L^2}}{\big\| v\big\|_{H^1}} \leq \big\|f\big\|_{L^2}.\]
We finally have 
\[\big\|u_h\big\|_{H^1} \leq C \big\|f\big\|_{L^2}.\]
Since $V$ is a Hilbert space, we deduce that there exists $w \in V$ and a subsequence $(u_{{h_k}})$ such that $u_{{h_k}}$ weakly converges towards $w$. Moreover, we know that $u_{{h_k}}$ converges towards $\partial_x u$ in $\mathcal D'(\SS^+)$, so we deduce $\partial_x u\in V\subset H^1$.

We prove in the same way that $\partial_y u\in H^1$, so that $\Delta_X u \in L^2$. Finally, thanks to~\eqref{asP}, we have
\[|\partial_z^2 u|\leq |\Delta_X u|+k|\nabla_{  X,  z}\cdot P \nabla_{  X,  z} u|=|\Delta_X u|+k|f|,\]
so that $u\in H^2(\SS^+)$, and the Proposition is proved.
\end{proof}

\subsection{Asymptotic expansion of the operators}
\label{opexpansion}
We are looking for shallow-water models ($\mu\ll 1$), and therefore need to obtain an expansion of the operators in terms of $\mu$. The method  is the following. We first exhibit the expansion of the matrix $P_i$ in terms of $\mu$. Then we look for approximate solutions $\phi_i^{app}$ ($i=1,2$) under the form:\[\phi_i^{app}=\phi_i^0+\mu \phi_i^1+ \mu^2\phi_i^2.\]
Plugging this Ansatz into~\eqref{tphi2} and~\eqref{tphi1}, and solving at each order of $\mu$, gives the $\phi_i^j$. From which we can deduce the expansion of the operators, by computing the normal derivative of $\phi_i^{app}$.

Since~\eqref{tphi2} is exactly the same problem as involved (in the case of the water-wave) in~\cite{Lannes2}, we can directly apply the Proposition 3.8 to the lower fluid.
\begin{proposition}
\label{defT}
Let $t_0>d/2$ and $s\geq t_0+1/2$, $\nabla \psi_2 \in H^{s+11/2}(\RR^2)$, ${\zeta_2 \in H^{s+9/2}(\RR^2)}$ and $b \in H^{s+11/2}(\RR^2)$, such that~\eqref{h} is satisfied. Then one has
 \begin{eqnarray}
\label{G20}  \big\vert G_2\psi_2+\mu\nabla\cdot  (h_2\nabla\psi_2)\big\vert_{H^s}&\leq& \mu^2 C_0,\\
\label{G2}  \big\vert G_2\psi_2+\mu\nabla\cdot  (h_2\nabla\psi_2)-\mu^2\nabla \cdot \mathcal T[h_2,\beta b]\nabla\psi_2\big\vert_{H^s}&\leq& \mu^3 C_1,
 \end{eqnarray}
with $C_j=C(\frac{1}{h},\beta\big\vert b\big\vert_{H^{s+7/2+2j}},\epsilon_2\big\vert\zeta_2\big\vert_{H^{s+5/2+2j}},\big\vert\nabla\psi_2\big\vert_{H^{s+7/2+2j}})$, and where we denote by ${h_2:=\frac{1}{\delta}-\beta b + \epsilon_2\zeta_2}$ the thickness of the lower layer, and
\[\mathcal T[h,b]V:=-\frac{1}{3}\nabla(h^3\nabla\cdot V)+\frac{1}{2}\big(\nabla(h^2\nabla b \cdot V)-h^2\nabla b\nabla \cdot V\big) +h\nabla b\nabla b \cdot V.\]
\end{proposition}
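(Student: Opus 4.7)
The plan is to reduce the boundary value problem defining $G_2\psi_2$ to the flat strip $\SS^-$ via Proposition~\ref{flattening}, and then construct an explicit two-term $\mu$-expansion of $\phi_2$. On $\SS^-$ the matrix $P_2$ from~\eqref{Pi} is affine in $\mu$: we may write $P_2 = A_0 + \mu A_1$, where $A_0 = \operatorname{diag}(0,\dots,0,1/h_2)$ and $A_1$ is a smooth matrix whose entries depend on $h_2$, $\nabla h_2$ and $\epsilon_2\nabla\zeta_2$, noting that $\nabla_X s_2|_{z=-1}=\beta\nabla b$ and $\nabla_X s_2|_{z=0}=\epsilon_2\nabla\zeta_2$. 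Consequently the elliptic operator splits as $\nabla_{X,z}\cdot P_2\nabla_{X,z} = \partial_z(h_2^{-1}\partial_z\,\cdot\,) + \mu L$, the co-normal Neumann condition at $\{z=-1\}$ decomposes similarly, and the Dirichlet trace at $\{z=0\}$ carries the full $\psi_2$.

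Next I look for an approximate solution $\phi_2^{app} = \phi_2^0 + \mu\phi_2^1 + \mu^2\phi_2^2$, inject the Ansatz into the flattened BVP~\eqref{tphi2}, and match powers of $\mu$. At each order one is left with a simple vertical ODE of the form $\partial_z(h_2^{-1}\partial_z\phi_2^j) = F_j(X,z)$ with Dirichlet data at $z=0$ and Neumann data at $z=-1$ produced by the previous iterates, solved by two successive integrations in $z$. One finds $\phi_2^0=\psi_2$; $\phi_2^1$ is then quadratic in $z$ with coefficients built from $h_2$, $\nabla h_2$, $\beta\nabla b$ and $\nabla\psi_2$; and $\phi_2^2$ is quartic. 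Inserting these into $G_2^{app}\psi_2 := e_{d+1}\cdot P_2\nabla_{X,z}\phi_2^{app}|_{z=0}$ and using the identity $\nabla h_2 = -\beta\nabla b + \epsilon_2\nabla\zeta_2$ recombines the $\mu$-coefficient as $-\nabla\cdot(h_2\nabla\psi_2)$ and, after a longer algebraic rearrangement, the $\mu^2$-coefficient as $\nabla\cdot\mathcal{T}[h_2,\beta b]\nabla\psi_2$.

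The final step is the error estimate. The residual $r := \phi_2 - \phi_2^{app}$ satisfies an elliptic problem on $\SS^-$ with homogeneous Dirichlet data at $\{z=0\}$ and source terms both in the interior and in the Neumann bottom trace that, by construction, are of size $O(\mu^2)$ for~\eqref{G20} and $O(\mu^3)$ for~\eqref{G2}, uniformly in the Sobolev norms carried by $C_j$. Applying the coercivity~\eqref{asP} together with standard tangential commutator arguments in the anisotropic spaces $H^{s,k}(\SS^-)$ yields $\|r\|_{H^{s+2,1}(\SS^-)}\leq \mu^N C_j$ with $N=2,3$ respectively. Since $G_2\psi_2 - G_2^{app}\psi_2$ is precisely $e_{d+1}\cdot P_2\nabla_{X,z}r|_{z=0}$, the trace theorem converts this into the claimed $H^s(\RR^d)$-bound.

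The hard part will be securing these residual bounds uniformly in $\mu$: in a naive $H^s$ estimate each horizontal derivative commuted into the equation costs a power of $\mu^{-1/2}$ because of the scaled block structure of $P_2$, so one must systematically exploit the anisotropic $H^{s,k}$-scale and differentiate only tangentially, exactly as in Lannes~\cite{Lannes2}, Proposition 3.8. Because the flattened lower-fluid problem~\eqref{tphi2} coincides with the one-fluid water-wave problem over bottom $\beta b$ on a strip of nominal depth $1/\delta$, that proposition applies directly after the substitution $h_2=1/\delta-\beta b+\epsilon_2\zeta_2$, delivering~\eqref{G20} and~\eqref{G2}.
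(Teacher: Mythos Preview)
Your approach is essentially the same as the paper's: the paper observes that the flattened lower-fluid problem~\eqref{tphi2} is exactly the one-fluid water-wave problem and directly invokes Proposition~3.8 of Lannes~\cite{Lannes2}, while Remark~\ref{remphi} records the same approximate solution $\phi_2^{app,1}$ you construct at orders $0$ and $1$. Your sketch of the formal expansion and of the $\mu$-uniform elliptic/trace estimate matches the method carried out in Appendix~\ref{PreuveG} (adapted to the lower layer), so the proposal is correct.
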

\begin{Rem} \label{remphi}
To obtain the estimate~\eqref{G20}, we use the approximate solution
\[\phi_2^{app,1}=\psi_2-\mu h_2\big(h_2(\frac{z^2}{2}+z)\Delta\psi_2-z\beta\nabla b\cdot\nabla\psi_2\big).\]
We need a higher order approximation to obtain~\eqref{G2}, namely $\phi_2^{app,2}=\phi_2^{app,1}+\mu^2 \phi_2^2$, where $\phi_2^2$ can be obtained using the same method as in the following study. The Proposition~\ref{defT} is then obtained following the path of Appendix~\ref{PreuveG} for the lower fluid (see~\cite{Florent} for a rigorous proof).
\end{Rem}

The study of the upper fluid is different from the one of the lower fluid, since we have now a non homogeneous Neumann condition on the interface. In order to manage this, we first decompose $\phi_1:=\check{\phi_1}+\bar{\phi_1}$, where $\check{\phi_1}$ is the unique solution of
\begin{equation}\label{cphi1}\left\{\begin{array}{ll}
 \nabla_{X,z}\cdot P_1 \nabla_{X,z}\check{\phi_1}=0 & \mbox{in } \SS^+, \\
\check{\phi_1} =\psi_1 & \mbox{on } \{z=1\}, \\
 \partial_n\check{\phi_1} =0 & \mbox{on } \{z=0\},
\end{array}
\right.\end{equation}
and $\bar{\phi_1}$ is the unique solution of
\begin{equation}\label{bphi1}\left\{\begin{array}{ll}
 \nabla_{X,z}\cdot P_1 \nabla_{X,z}\bar{\phi_1}=0 & \mbox{in } \SS^+, \\
\bar{\phi_1} =0 & \mbox{on } \{z=1\}, \\
 \partial_n\bar{\phi_1} =G_2\psi_2 & \mbox{on } \{z=0\}.
\end{array}
\right.\end{equation}
Again, the system satisfied by $\check{\phi_1}$ reduces to the water-wave problem (where the topography of the bottom would be given by $\epsilon_2\zeta_2$), so we introduce as in Remark~\ref{remphi} the approximate solutions
\begin{eqnarray*}\check{\phi}_1^{app,1}&:=&\psi_1-\mu h_1\big(h_1(\frac{(z-1)^2}{2}+(z-1))\Delta\psi_1-(z-1)\epsilon_2\nabla\zeta_2\cdot\nabla\psi_1\big),\\
\check{\phi}_1^{app,2}&:=&\check{\phi}_1^{app,1}+\mu^2 \check{\phi}_1 ^2.
\end{eqnarray*}
It follows that $\check{G_1}\psi_1$ the contribution on the Dirichlet-Neumann operator from $\check{\phi_1}$ can be expanded as in the following Proposition.
\begin{proposition}
\label{G1c}
Let $t_0>d/2$ and $s\geq t_0+1/2$, $\nabla \psi_1 , \zeta_2 \in H^{s+11/2}(\RR^2)$, $\zeta_1 \in H^{s+9/2}(\RR^2)$, such that~\eqref{h} is satisfied. Then one has
\begin{eqnarray}
\big\vert\check{G_1}\psi_1+\mu\nabla\cdot  (h_1\nabla\psi_1)\big\vert_{H^s}&\leq& \mu^2 C_0,\\
\big\vert\check{G_1}\psi_1+\mu\nabla\cdot  (h_1\nabla\psi_1)-\mu^2\nabla \cdot \mathcal T[h_1,\epsilon_2\zeta_2]\nabla\psi_1\big\vert_{H^s}&\leq& \mu^3 C_1,
 \end{eqnarray}
with $C_j=C(\frac{1}{h},\epsilon_2\big\vert\zeta_2\big\vert_{H^{s+7/2+2j}},\epsilon_1\big\vert\zeta_1\big\vert_{H^{s+5/2+2j}},\big\vert\nabla\psi_1\big\vert_{H^{s+7/2+2j}})$, and where we denote by $h_1:=1+\epsilon_1\zeta_1-\epsilon_2\zeta_2$ the thickness of the upper layer, and $\mathcal T[h,b]V$ is defined as in Proposition~\ref{defT}.
\end{proposition}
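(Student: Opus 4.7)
The plan is to reduce the analysis of $\check{\phi_1}$ to the water-wave-type estimates of Proposition \ref{defT}. The key observation is that \eqref{cphi1} is structurally identical to \eqref{tphi2}: a divergence-form elliptic equation on a flat strip, with Dirichlet data on one horizontal face and homogeneous co-normal derivative on the other. The vertical translation $z\mapsto z-1$ sends $\SS^+$ bijectively onto $\SS^-$, the Dirichlet face $\{z=1\}$ onto $\{z=0\}$, and the Neumann face $\{z=0\}$ onto $\{z=-1\}$. Since the matrix $P_1$ defined by \eqref{Pi} depends only on $\partial_z s_1$ and $\nabla_X s_1$, and $s_1(X,z)=h_1(X)z+\epsilon_2\zeta_2(X)$ is affine in $z$, this matrix is preserved under the translation. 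One can thus read \eqref{cphi1} as a problem of the form \eqref{tphi2} with layer thickness $h_1$ in place of $h_2$ and ``bottom topography'' $\epsilon_2\zeta_2$ in place of $\beta b$. Applying Proposition \ref{defT} with these substitutions yields both estimates of the present proposition, since the approximate solution $\phi_2^{app,1}$ of Remark \ref{remphi} transforms exactly into $\check{\phi}_1^{app,1}$ under the same translation.

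Equivalently, one may reproduce the construction directly on $\SS^+$ without invoking the reduction. The steps are: expand $P_1$ to first order in $\mu$ from \eqref{Pi}; verify that $\check{\phi}_1^{app,1}$ of Remark \ref{remphi} meets the Dirichlet condition at $\{z=1\}$ exactly, while both the PDE residual $\nabla_{X,z}\cdot P_1\nabla_{X,z}\check{\phi}_1^{app,1}$ and the co-normal defect at $\{z=0\}$ are of size $O(\mu^2)$ in suitable Sobolev norms (the cancellations in the co-normal defect hinge on $\nabla_X s_1{}_{|z=0}=\epsilon_2\nabla\zeta_2$, in exact analogy with $\nabla_X s_2{}_{|z=-1}=\beta\nabla b$ for the lower fluid); construct the corrector $\check{\phi}_1^2$ by solving the next-order boundary value problem, giving $\check{\phi}_1^{app,2}=\check{\phi}_1^{app,1}+\mu^2\check{\phi}_1^2$ with residuals of size $O(\mu^3)$; invoke the elliptic stability furnished by the proof of Proposition \ref{flattening}, namely that a solution with vanishing Dirichlet trace at $\{z=1\}$ has $H^2(\SS^+)$-norm controlled by the $L^2$-norm of the source and the $H^{1/2}$-norm of the Neumann defect at $\{z=0\}$, to conclude that $\|\check{\phi_1}-\check{\phi}_1^{app,j}\|_{H^2}=O(\mu^{j+1})$; finally, compute $\check{G_1}\psi_1=e_{d+1}\cdot P_1\nabla_{X,z}\check{\phi}_1^{app,j}{}_{|z=1}$ and simplify to obtain the leading term $-\mu\nabla\cdot(h_1\nabla\psi_1)$ and the correction $\mu^2\nabla\cdot\mathcal T[h_1,\epsilon_2\zeta_2]\nabla\psi_1$ with the same operator $\mathcal T$ as in Proposition \ref{defT}.

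The main obstacle is essentially bookkeeping: tracking the Sobolev regularity needed at each step to control the nonlinear products in $\zeta_1$, $\zeta_2$, $\nabla\psi_1$ and their derivatives that enter $\check{\phi}_1^2$ and the residuals, as well as the Neumann trace estimate and the $H^2$ elliptic regularity. This produces exactly the thresholds $s+5/2+2j$, $s+7/2+2j$ and $s+11/2$ stated in the proposition; they mirror line by line those for Proposition \ref{defT} carried out in Appendix \ref{PreuveG}, since the algebraic structure is preserved under the reduction described above.
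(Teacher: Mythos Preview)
Your proposal is correct and follows exactly the paper's approach: the paper does not give a separate proof of Proposition~\ref{G1c} but simply remarks that the system satisfied by $\check{\phi_1}$ ``reduces to the water-wave problem (where the topography of the bottom would be given by $\epsilon_2\zeta_2$)'', i.e.\ precisely your reduction of \eqref{cphi1} to \eqref{tphi2} via the translation $z\mapsto z-1$ with the substitutions $h_2\to h_1$, $\beta b\to\epsilon_2\zeta_2$, $\psi_2\to\psi_1$. Your verification that $\phi_2^{app,1}$ transforms into $\check{\phi}_1^{app,1}$ and your tracking of the regularity indices are more explicit than what the paper writes, but the argument is the same; one small imprecision is the phrase ``this matrix is preserved under the translation'' --- the matrix $P_1$ does depend on $z$ through $\nabla_X s_1$, so it is not literally invariant, but the translated problem has a coefficient matrix of the same form \eqref{Pi} built from the affine map $\tilde s_1(X,\tilde z)=h_1\tilde z+(1+\epsilon_1\zeta_1)$, which is all that is needed.
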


The last step consists in computing the contribution on the Dirichlet-Neumann operator from $\bar{\phi_1}$. We first define $\bar{\phi}_1^{app}=\phi^0+\mu \phi^1+ \mu^2\phi^2$. It is straightforward that 
\[P_1=P^0+\mu P^1 \mbox{, with } P^0:=\left(\begin{array}{cc}  0_{d,d} & 0_{d,1} \\ 0_{1,d} & \frac{1}{h_1} \end{array}\right) \mbox{ and } P^1:=\left(\begin{array}{cc}  h_1 I_d & -\nabla_X s_1 \\ -{\nabla_X s_1}^T & \frac{|\nabla_X s_1|^2}{h_1} \end{array}\right), \]
where we have used the notations $0_{m,n}$ for the $m\times n$ zero matrix, and $I_d$ for the $d\times d$ identity matrix.
Plugging these expansions into~\eqref{bphi1}, using Proposition~\ref{defT}, and solving at each order, we get:

\paragraph{At order $O(1)$}
\[\left\{\begin{array}{ll}
\vspace{1mm} \frac{1}{h_1}\partial_z^2\phi^0=0 & \mbox{in } \SS^+, \\
\phi^0 =0 & \mbox{on } \{z=1\}, \\
 \frac{1}{h_1}\partial_z\phi^0 =0 & \mbox{on } \{z=0\},
\end{array}
\right.\]
so that we have
\begin{equation}
 \phi^0 =0.
\end{equation}

\paragraph{At order $O(\mu)$}
\[\left\{\begin{array}{ll}
 \vspace{1mm}\frac{1}{h_1}\partial_z^2\phi^1=-\nabla_{X,z}\cdot P^1 \nabla_{X,z}\phi^0=0 & \mbox{in } \SS^+, \\
\phi^1 =0 & \mbox{on } \{z=1\}, \\
 \frac{1}{h_1}\partial_z\phi^1 =-e_{d+1}\cdot   P^1\nabla_{  X,  z}\phi^0-\nabla\cdot  (h_2\nabla\psi_2) & \mbox{on } \{z=0\},
\end{array}
\right.\]
which gives immediately
\begin{equation}
 \phi^1 =-h_1\nabla\cdot  (h_2\nabla\psi_2)(z-1).
\end{equation}

\paragraph{At order $O(\mu^2)$}
\[\left\{\begin{array}{ll}
\vspace{1mm}\frac{1}{h_1}\partial_z^2\phi^2=h_1\big((z-1)h_1\nabla \cdot\nabla \mathcal{A}_2-2\epsilon_1\nabla\zeta_1\cdot\nabla \mathcal{A}_2-\epsilon_1\Delta\zeta_1 \mathcal{A}_2 \big) & \mbox{in } \SS^+, \\
\phi^2 =0 & \mbox{on } \{z=1\}, \\
 \frac{1}{h_1}\partial_z\phi^2 =\nabla \cdot \mathcal T[h_2,\beta b]\nabla\psi_2+\epsilon_2\nabla\zeta_2\cdot(h_1\nabla \mathcal{A}_2+\epsilon_1\nabla\zeta_1 \mathcal{A}_2) & \mbox{on } \{z=0\},
\end{array}
\right.\]
with the notation $\mathcal{A}_2:=\nabla\cdot  (h_2\nabla\psi_2)$. This leads to the solution
\begin{eqnarray}
 \phi^2 =&&h_1\Big((h_1^2\nabla \cdot\nabla \mathcal{A}_2)(\frac{z^3}{6}-\frac{z^2}{2}+\frac{1}{3})-h_1(2\epsilon_1\nabla\zeta_1\cdot\nabla \mathcal{A}_2+\epsilon_1\Delta\zeta_1 \mathcal{A}_2)(\frac{z^2}{2}-\frac{1}{2})\nonumber \\&&\quad+(\nabla \cdot \mathcal T[h_2,\beta b]\nabla\psi_2+\epsilon_2\nabla\zeta_2\cdot(h_1\nabla \mathcal{A}_2+\epsilon_1\nabla\zeta_1 \mathcal{A}_2))(z-1)\Big). 
\end{eqnarray}

This formal derivation of $\bar\phi_1^{app}$ allows us to obtain the expansion of $\bar{G_1}\psi_2$, the contribution on the Dirichlet-Neumann operator from $\bar{\phi_1}$. Formally, we have
\begin{equation}
\label{G1b}
 \bar{G_1}\psi_2\approx-\mu \mathcal{A}_2+\mu^2\Big(\nabla \cdot \mathcal T[h_2,\beta b]\nabla\psi_2-\frac{1}{2}\nabla \cdot(h_1^2\nabla  \mathcal{A}_2)-\nabla\cdot(h_1\epsilon_1\nabla\zeta_1 \mathcal{A}_2) \Big).
 \end{equation}

Summing this expansion with the one of Proposition~\ref{G1c} gives immediately the expansion of the full operator $G_1(\psi_1,\psi_2)$. The following Proposition gives a rigorous statement of this fact ; its proof is postponed to Annex~\ref{PreuveG}.
\begin{proposition}
\label{G1}
Let $t_0>d/2$ and $s\geq t_0+1/2$, $\nabla \psi_1$, $\nabla\psi_2 \in H^{s+11/2}(\RR^2)$, $\zeta_1 \in H^{s+7/2}(\RR^2)$, $\zeta_2 \in H^{s+9/2}(\RR^2)$ and $b\in H^{s+11/2}(\RR^2)$, such that~\eqref{h} is satisfied. Then one has
\begin{eqnarray}
\label{G10} \big\vert G_1(\psi_1,\psi_2)+\mu (\mathcal{A}_1+\mathcal{A}_2)\big\vert_{H^s}&\leq& \mu^2 C_0,\\
\big\vert G_1(\psi_1,\psi_2)+\mu (\mathcal{A}_1+\mathcal{A}_2) -\mu^2\Big(\nabla \cdot \mathcal T_1+\nabla \cdot \mathcal T_2-\frac{1}{2}\nabla \cdot(h_1^2\nabla  \mathcal{A}_2) \ && \nonumber \\
\label{G11} -\nabla\cdot(h_1\epsilon_1\nabla\zeta_1 \mathcal{A}_2) \Big)\big\vert_{H^s}&\leq &\mu^3 C_1,
 \end{eqnarray}
with the constants \[\begin{array}{rl}
C_0&=C(\frac{1}{h},\beta \big\vert b\big\vert_{H^{s+7/2}},\epsilon_2\big\vert\zeta_2\big\vert_{H^{s+5/2}},\epsilon_1\big\vert\zeta_1\big\vert_{H^{s+3/2}},\big\vert\nabla\psi_1\big\vert_{H^{s+7/2}},\big\vert\nabla\psi_2\big\vert_{H^{s+7/2}}),\\
C_1&=C(\frac{1}{h},\beta \big\vert b\big\vert_{H^{s+11/2}},\epsilon_2\big\vert\zeta_2\big\vert_{H^{s+9/2}},\epsilon_1\big\vert\zeta_1\big\vert_{H^{s+7/2}},\big\vert\nabla\psi_1\big\vert_{H^{s+11/2}},\big\vert\nabla\psi_2\big\vert_{H^{s+11/2}}),
\end{array}\] and the notations
\[\begin{array}{cc}
 \mathcal{A}_1:=\nabla\cdot  (h_1\nabla\psi_1), &  \mathcal{A}_2:=\nabla\cdot  (h_2\nabla\psi_2), \\
\mathcal T_1:=\mathcal T[h_1,\epsilon_2\zeta_2]\nabla\psi_1, & \mathcal T_2:=\mathcal T[h_2,\beta b]\nabla\psi_2.
\end{array} \]
\end{proposition}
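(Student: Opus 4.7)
The plan is to exploit the linear decomposition $\phi_1 = \check\phi_1 + \bar\phi_1$ already introduced, where $\check\phi_1$ satisfies a homogeneous Neumann condition on the interface and $\bar\phi_1$ carries the inhomogeneous data $G_2\psi_2$. Correspondingly $G_1(\psi_1,\psi_2) = \check G_1\psi_1 + \bar G_1\psi_2$, so the estimates~\eqref{G10} and~\eqref{G11} reduce to combining Proposition~\ref{G1c} (which already gives the expansion of $\check G_1\psi_1$ in terms of $-\mu\mathcal{A}_1 + \mu^2\nabla\cdot\mathcal{T}_1$) with a rigorous version of the formal expansion~\eqref{G1b} for $\bar G_1\psi_2$. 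Hence the entire work lies in justifying that $\bar\phi_1$ is well approximated in an appropriate Sobolev norm by the Ansatz $\bar\phi_1^{app}=\phi^0+\mu\phi^1+\mu^2\phi^2$ constructed explicitly above.

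First I would define the error $w := \bar\phi_1 - \bar\phi_1^{app}$ and compute the boundary value problem it satisfies. By construction, $\phi^0,\phi^1,\phi^2$ have been chosen so that $\nabla_{X,z}\cdot P_1\nabla_{X,z}\bar\phi_1^{app}$ vanishes identically at orders $1$ and $\mu$ in $\SS^+$, with a residue of size $O(\mu^3)$ in the interior, and so that the Dirichlet trace at $\{z=1\}$ vanishes exactly while the co-normal trace at $\{z=0\}$ matches $G_2\psi_2$ up to an error coming from the difference between $G_2\psi_2$ and its own expansion $-\mu\mathcal{A}_2+\mu^2\nabla\cdot\mathcal{T}_2$. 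Using Proposition~\ref{defT} to control this mismatch, the function $w$ then satisfies an elliptic problem of the form~\eqref{elliptic} with a source $f$ and a Neumann datum $g$ whose $H^{s,k}(\SS^+)$ norms are respectively $O(\mu^3)$ and $O(\mu^3)$, with tame dependence on the $C_j$ constants.

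The key technical step is then an energy/regularity estimate for~\eqref{elliptic}: using the coercivity~\eqref{asP} of $P_1$, the variational machinery of Proposition~\ref{flattening}, and successive tangential differentiation with commutator estimates in the weighted spaces $H^{s,k}(\SS^\pm)$, I would establish a bound of the form $\|w\|_{H^{s+1,k}} \le C(\tfrac{1}{h},\ldots)\bigl(\|f\|_{H^{s,k}}+|g|_{H^{s+1/2}}\bigr)$, exactly in the spirit of the analogous estimates used for the water-wave problem in the work of Alvarez-Samaniego and Lannes. Applying this to our $w$ produces $\|w\|_{H^{s+1,k}}\le\mu^3 C_1$. Finally, since $\bar G_1\psi_2 = e_{d+1}\cdot P_1\nabla_{X,z}\bar\phi_1\big|_{z=1}$, a trace estimate transfers this bound to $|\bar G_1\psi_2 - (\text{Ansatz expansion})|_{H^s}\le\mu^3 C_1$, and summing with the $\check G_1\psi_1$ expansion from Proposition~\ref{G1c} yields~\eqref{G11}; the coarser estimate~\eqref{G10} follows by the same argument after retaining only $\phi^0+\mu\phi^1$.

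The principal obstacle will be the bookkeeping of regularity: each multiplication of the Ansatz by $P_1$ loses derivatives on $\zeta_1,\zeta_2,b$, and each order in the expansion requires one more order of regularity, which is precisely why the index shifts $s+7/2,\,s+9/2,\,s+11/2$ appear in the hypothesis. Carefully tracking these losses through the elliptic regularity estimate — and verifying that the constants $C_0, C_1$ come out as stated, with tame Sobolev dependence — is the delicate part; the underlying argument, detailed for the analogous lower-fluid setting in~\cite{Lannes2}, is then relegated to Appendix~\ref{PreuveG}.
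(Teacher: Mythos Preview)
Your overall strategy---construct an explicit approximate solution, control the remainder via an elliptic energy estimate on the flat strip, and recover the operator by a trace---is the one the paper follows in Appendix~\ref{PreuveG}. (The paper actually works directly with the combined error $u=\phi_1-\phi_1^{app}$ rather than treating $\bar\phi_1$ separately, but that is only organizational.)

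There is, however, one genuine technical point your outline misses. The coercivity~\eqref{asP} you invoke for $P_1$ is \emph{not} uniform in $\mu$: the horizontal block of $P_1$ in~\eqref{Pi} is $\mu\,\partial_z s_1\,I_d$, so the lower bound $\Theta\cdot P_1\Theta\ge k^{-1}|\Theta|^2$ degenerates like $\mu$ as $\mu\to 0$. Consequently an estimate of the form $\|w\|_{H^{s+1,k}}\le C(\tfrac{1}{h},\dots)(\|f\|+|g|)$ with $C$ independent of $\mu$ is simply false; the constant blows up, and the $O(\mu^3)$ size of the source would not survive.

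The paper resolves this by passing to the anisotropically scaled gradient $\nabla^\mu_{X,z}=(\sqrt\mu\,\nabla^T,\partial_z)^T$ and the matrix $P^\mu$, for which coercivity \emph{is} uniform in $\mu$ (Step~1 of Appendix~\ref{PreuveG}). The price is that the trace theorem then produces a factor $1/\sqrt\mu$ on the boundary contribution (Step~3); this is why the Neumann mismatch has to be written in divergence form $\nabla\cdot V$, with $V=\mu h_2(\nabla\psi_2-\overline u_2)$ obtained from the identity~\eqref{u2G2}, so that integration by parts shifts the derivative onto $u$. A compensating $\sqrt\mu$ is recovered when one takes the co-normal trace at $z=1$ (final subsection). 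Tracking these $\sqrt\mu$ gains and losses through Steps~3--5 is the real delicate bookkeeping; it is driven by the anisotropy of the operator in $\mu$, not merely by the regularity index shifts you mention.
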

\begin{Rem}As in Remark~\ref{remphi}, the proof of the estimate~\eqref{G10} requires the approximate solution $\phi_1^{app,1}$, with \[\phi_1^{app,1}:=\check{\phi}_1^{app,1}+\phi^0+\mu\phi^1, \] and the second estimate~\eqref{G11} uses \[\phi_1^{app,2}:=\check{\phi}_1^{app,2}+\phi^0+\mu\phi^1+\mu^2\phi^2.\] 
In Appendix~\ref{PreuveG} (Steps 4 and 5), we give estimates on $\phi_1-\phi_1^{app}$, obtained thanks to the trace theorem and an elliptic estimate on the boundary value problem solved by $\phi_1-\phi_1^{app}$. This leads to the desired inequalities, since \[G_1(\psi_1,\psi_2)-\partial_n{\phi_1^{app}}_{\vert z=1}=\partial_n(\phi_1-\phi_1^{app})_{\vert z=1}.\]
\end{Rem}
The last expansion to obtain is the one of $H(\psi_1,\psi_2)$, which is given by the following.
\begin{proposition}
\label{PH}
Let $t_0>d/2$ and $s\geq t_0+1/2$, $\nabla \psi_1$, $\nabla\psi_2 \in H^{s+11/2}(\RR^2)$, $\zeta_1 \in H^{s+7/2}(\RR^2)$, $\zeta_2 \in H^{s+9/2}(\RR^2)$ and $b\in H^{s+11/2}(\RR^2)$, such that~\eqref{h} is satisfied. Then one has
\begin{eqnarray}
\label{H0} \big\vert H(\psi_1,\psi_2)-\nabla\psi_1\big\vert_{H^s}&\leq& \mu C_0,\\
 \big\vert H(\psi_1,\psi_2)-\nabla\psi_1-\mu\nabla\Big(h_1(\mathcal{A}_1+\mathcal{A}_2)-\frac{1}{2}h_1^2\Delta\psi_1 \quad &&\nonumber \\
 \label{H} -h_1\epsilon_1\nabla\zeta_1\cdot\nabla\psi_1\Big)\big\vert_{H^s}&\leq& \mu^2 C_1,
 \end{eqnarray}
with \[\begin{array}{rl}
C_0&=C(\frac{1}{h},\beta \big\vert b\big\vert_{H^{s+7/2}},\epsilon_2\big\vert\zeta_2\big\vert_{H^{s+5/2}},\epsilon_1\big\vert\zeta_1\big\vert_{H^{s+3/2}},\big\vert\nabla\psi_1\big\vert_{H^{s+7/2}},\big\vert\nabla\psi_2\big\vert_{H^{s+7/2}}),\\
C_1&=C(\frac{1}{h},\beta \big\vert b\big\vert_{H^{s+11/2}},\epsilon_2\big\vert\zeta_2\big\vert_{H^{s+9/2}},\epsilon_1\big\vert\zeta_1\big\vert_{H^{s+7/2}},\big\vert\nabla\psi_1\big\vert_{H^{s+11/2}},\big\vert\nabla\psi_2\big\vert_{H^{s+11/2}}),
\end{array}\] and using the notations of Proposition~\ref{G1}.
\end{proposition}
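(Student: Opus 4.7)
The plan is to mirror the strategy carried out in Appendix~\ref{PreuveG} for Proposition~\ref{G1}, recycling the same approximate solutions $\phi_1^{app,1}$ and $\phi_1^{app,2}$ already introduced in the preceding formal derivation. Since Proposition~\ref{flattening} identifies $H(\psi_1,\psi_2) = \nabla\phi_1|_{z=0}$ in the flat-strip formulation, writing $\phi_1^{err} := \phi_1 - \phi_1^{app}$ gives
\[H(\psi_1,\psi_2) - \nabla\phi_1^{app}|_{z=0} = \nabla(\phi_1^{err})|_{z=0},\]
so the proof splits into two independent tasks: (a) verify that the horizontal gradient of $\phi_1^{app}$ at $z=0$ reproduces the explicit profile announced in the statement; (b) control the trace of $\nabla\phi_1^{err}$ at $z=0$ in $H^s(\RR^d)$.

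Task (a) is a direct algebraic check, and the first-order approximant $\phi_1^{app,1} = \check{\phi}_1^{app,1}+\phi^0+\mu\phi^1$ already suffices to reach the $O(\mu^2)$ accuracy of~\eqref{H}. Evaluating at $z=0$ yields $\check{\phi}_1^{app,1}|_{z=0} = \psi_1 + \mu\bigl(\tfrac{1}{2}h_1^2\Delta\psi_1 - h_1\epsilon_2\nabla\zeta_2\cdot\nabla\psi_1\bigr)$, $\phi^0\equiv 0$, and $\mu\phi^1|_{z=0} = \mu h_1\mathcal{A}_2$. Using the identity $h_1\mathcal{A}_1 = h_1^2\Delta\psi_1 + h_1(\epsilon_1\nabla\zeta_1-\epsilon_2\nabla\zeta_2)\cdot\nabla\psi_1$, these terms rearrange into
\[\phi_1^{app,1}|_{z=0} = \psi_1 + \mu\Bigl(h_1(\mathcal{A}_1+\mathcal{A}_2) - \tfrac{1}{2}h_1^2\Delta\psi_1 - h_1\epsilon_1\nabla\zeta_1\cdot\nabla\psi_1\Bigr),\]
so applying $\nabla$ reproduces exactly the expansion in~\eqref{H}. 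The coarser estimate~\eqref{H0} will follow by noting that the $O(\mu)$ correction is itself controlled in $H^s$ by a constant of the type $C_0$.

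For task (b), $\phi_1^{err}$ solves an elliptic boundary value problem on $\SS^+$ with operator $\nabla_{X,z}\cdot P_1\nabla_{X,z}$, homogeneous Dirichlet condition at $z=1$, a small interior source obtained by plugging the Ansatz into the equation, and a small Neumann residual at $z=0$ that involves in particular the error between $G_2\psi_2$ and its shallow-water expansion, already controlled by Proposition~\ref{defT}. By construction these residuals have size $\mu^{k+1}$ in the relevant Sobolev norms (with $k=0$ for $\phi_1^{app,1}$ and $k=1$ for $\phi_1^{app,2}$). Exactly as in Steps 4 and 5 of Appendix~\ref{PreuveG}, the coercivity estimate~\eqref{asP} combined with elliptic regularity on the flat strip delivers an $H^{s+3/2,1}(\SS^+)$-bound on $\phi_1^{err}$ of the desired order, and continuity of the tangential trace $u\mapsto \nabla u|_{z=0}$ on this anisotropic space yields both~\eqref{H0} and~\eqref{H}. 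The main obstacle is the careful bookkeeping of Sobolev indices and powers of $\mu$ through each elliptic estimate, but since the only substantive difference with Proposition~\ref{G1} is that we track a tangential trace at $z=0$ rather than the co-normal trace at $z=1$, the argument of the appendix transfers essentially verbatim.
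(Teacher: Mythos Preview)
Your strategy mirrors the paper's, but there is a genuine gap in the $\mu$-bookkeeping that you flag as ``the main obstacle'' and then dismiss. The elliptic estimates of Appendix~\ref{PreuveG} control the \emph{scaled} gradient $\nabla^\mu_{X,z}u=(\sqrt\mu\nabla u,\partial_z u)$, because it is $P^\mu$, not $P_1$, that is uniformly coercive in $\mu$. Passing to the \emph{unscaled} horizontal trace needed for $H(\psi_1,\psi_2)$ therefore costs a factor $\mu^{-1/2}$:
\[\big|\nabla u_{|z=0}\big|_{H^s}\leq\frac{C}{\sqrt\mu}\big\|\Lambda^{s+1/2}\nabla^\mu_{X,z}u\big\|_{L^2},\]
and for $u=\phi_1-\phi_1^{app,1}$ the estimate~\eqref{estHs1} together with~\eqref{esth} and~\eqref{estV} only gives $\|\Lambda^{s+1/2}\nabla^\mu_{X,z}u\|_{L^2}\leq C\mu^{3/2}$ (the dominant contribution is $\mu^{-1/2}|V|_{H^{s+1}}\leq C\mu^{3/2}$). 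The upshot is $|\nabla u_{|z=0}|_{H^s}\leq\mu C_0$: enough for~\eqref{H0}, but one full power of $\mu$ short of~\eqref{H}. Hence your assertion that $\phi_1^{app,1}$ ``already suffices to reach the $O(\mu^2)$ accuracy of~\eqref{H}'' is correct for the algebraic identification of the profile (task~(a)) but fails for task~(b).

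This loss of one order in $\mu$ is exactly the content of the Remark the paper places immediately after Proposition~\ref{PH}, and it is why the paper resorts to the higher-order approximant $\phi_1^{app,2}$ to establish~\eqref{H}: the residuals in the elliptic problem for $\tilde u=\phi_1-\phi_1^{app,2}$ gain an extra power of $\mu$ (your count $\mu^{k+1}$ is also off by one --- system~\eqref{equ} already displays $\mu^2$ residuals for $\phi_1^{app,1}$, not $\mu$), which is then spent to absorb the trace loss. The difference with Proposition~\ref{G1} is substantive, not cosmetic: the co-normal derivative $\partial_n u|_{z=1}$ is dominated by $\partial_z u$, the unscaled component of $\nabla^\mu_{X,z}u$, and incurs no $\mu^{-1/2}$ penalty, whereas the tangential gradient $\nabla u|_{z=0}$ sits at the $\sqrt\mu$-scaled end. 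The argument therefore does not transfer ``essentially verbatim''.
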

\begin{proof}
 The proof uses the estimates~\eqref{estHs} and~\eqref{estHs1} on $u:=\phi_1-\phi_1^{app,1}$. Indeed, we have to give an estimate for $\big\vert \nabla u_{\vert  z=0}\big\vert _{H^s}$, and a trace theorem (see M\'etivier~\cite{Metivier1} pp.23-27) gives for all $s\geq 0$,
\[\big\vert \nabla u_{\vert  z=0}\big\vert _{H^s}\leq  \mbox{Cst}(\big\|\Lambda^{s+1/2}\nabla u\big\|_{L^2}+\big\|\Lambda^{s-1/2}\partial_z\nabla u\big\|)_{L^2}\leq \frac{\mbox{Cst}}{\sqrt{\mu}} \big\|\Lambda^{s+1/2}\nabla^\mu_{X,z} u\big\|_{L^2}. \]
Then, the estimate~\eqref{estHs} allows to conclude:
\[\begin{array}{r}\big\vert \nabla u_{\vert  z=0}\big\vert_{H^s}\leq \frac{C_{s,t_0}}{\sqrt\mu}(\frac{1}{h},\epsilon_1\big\vert \zeta_1\big\vert _{H^{\mathrm{max}\{t_0+2,s+3/2\}}},\epsilon_2\big\vert \zeta_2\big\vert _{H^{\mathrm{max}\{t_0+2,s+3/2\}}} ))(\mu^2\big\| \hh\big\|_{H^{s+1/2}}\\+ \frac{1+\sqrt\mu}{\sqrt\mu}\big\vert V\big\vert _{H^{s+1}}).\end{array}\]
The first estimate~\eqref{H0} follows from this relation, together with the estimates~\eqref{esth} and~\eqref{estV}.

As for the Proposition~\ref{G1}, the second estimate~\eqref{H} requires the use of the higher order approximate solution  $\t u:=\phi_1-\phi_1^{app,2}$, and the result is obtained in the same way. 
\end{proof}
\begin{Rem}Using the same approximate solution as for the expansion of $G_1(\psi_1,\psi_2)$, we obtain an estimate one order less precise in $\mu$ than in~\eqref{G10} and~\eqref{G11}. This loss of precision is not seen at the formal level and comes from the $\frac{1}{\sqrt\mu}$ term, due to the horizontal scaling, which is necessary in order to have a uniformly elliptic operator.
\end{Rem}

\subsection{Asymptotic models}
\label{asymmod}
The expansions of the operators we obtained allow us to derive asymptotic models from~\eqref{sys}. The frame of this study is limited to shallow water/shallow water regimes, that is to say long waves and layers of similar depth ($\mu\ll 1$, and $\delta\sim 1$). However, the method could be extended to many different regimes, as it has been done in~\cite{BLS} with the rigid-lid assumption. As we see in \S~\ref{Section3}, we recover most of the models which have been introduced in the literature, as well as interesting new ones (the Boussinesq/Boussinesq model with coefficients~\eqref{coefc}), and the higher order system~\eqref{HO}). Furthermore, we show rigorously that~\eqref{sys} is consistent with all of these models, in the following sense (see~\cite{BCL}).
\begin{definition}
 The internal-wave system~\eqref{sys} is consistent with a system $S$ of $2d+2$ equations, if any sufficiently smooth solution of~\eqref{sys} such that~\eqref{h} is satisfied solves $S$ up to a small residual called the precision of the asymptotic model. Throughout this paper, the precision is given in the sense of $L^\infty H^s$ norms, which means that the $H^s$ norm of the residual is uniformly bounded, with respect to $t$ where the solution is defined. 
\end{definition}
\begin{Rem}
The consistency does not require the well-posedness of~\eqref{sys}, and only concerns the properties of smooth solutions of the system. However, if we assume the existence of such functions, we can prove that they are approximated by the solutions of consistent systems, as we see in \S~\ref{Solapp}.
\end{Rem}

\subsubsection{The shallow water/shallow water regime: $\mu\ll 1$} \label{secSWSW}
We assume here that both layers are in the shallow-water regime ($\mu\ll 1$), whereas strong nonlinearity are allowed ($\epsilon_1,\epsilon_2=O(1)$). We use the first order expansions~\eqref{G20},~\eqref{G10} and~\eqref{H0}, and we plug them into~\eqref{sys}. We obtain, discarding the $O(\mu)$ terms, the following system:
\begin{equation} \label{SWSW}\left\{ \begin{array}{l}
\alpha \partial_t \zeta_1 + \nabla\cdot(h_1\nabla\psi_1)+\nabla\cdot(h_2\nabla\psi_2) =  0,  \\ 
\partial_t \zeta_2 +\nabla\cdot(h_2\nabla\psi_2)  =  0,  \\
\partial_t \nabla\psi_1 + \alpha \nabla \zeta_1 +\dfrac{\epsilon_2}{2}\nabla \left(|\nabla\psi_1|^2\right) =  0, \\ 
\partial_t \nabla\psi_2 + (1-\gamma)\nabla \zeta_2 + \gamma \alpha\nabla \zeta_1 +\dfrac{\epsilon_2}{2}\nabla\left( |\nabla\psi_2|^2 \right) =  0, \end{array} \right. \end{equation}
where $h_1=1+\epsilon_1\zeta_1-\epsilon_2\zeta_2$ and $h_2=\frac{1}{\delta}-\beta b+\epsilon_2\zeta_2$.
\begin{Rem}
 This system has already been introduced in the flat bottom case in~\cite{CGK}, and equivalently, though under a different form, in~\cite{CCa2}. We say more about this in \S~\ref{sectionlayermean}.
\end{Rem}
\begin{proposition} \label{consSW}
The full system~\eqref{sys} is consistent with~\eqref{SWSW}, at the precision $\mu C_0$, with \[\begin{array}{r}C_0=C(\frac{1}{h},\beta \big\vert b\big\vert_{W^{1,\infty}H^{s+7/2}},\epsilon_2\big\vert\zeta_2\big\vert_{W^{1,\infty}H^{s+5/2}},\epsilon_1\big\vert\zeta_1\big\vert_{W^{1,\infty}H^{s+3/2}},\big\vert\nabla\psi_1\big\vert_{W^{1,\infty}H^{s+7/2}},\\\big\vert\nabla\psi_2\big\vert_{W^{1,\infty}H^{s+7/2}}).\end{array}\]
\end{proposition}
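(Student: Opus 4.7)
The plan is to substitute a sufficiently smooth solution $(\zeta_1,\zeta_2,\psi_1,\psi_2)$ of~\eqref{sys} into each of the four equations of~\eqref{SWSW} and to show that the resulting residual is $O(\mu)$ in $L^\infty H^s$, using exclusively the leading-order expansions~\eqref{G20},~\eqref{G10}, and~\eqref{H0} of Propositions~\ref{defT},~\ref{G1}, and~\ref{PH}.

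The first two equations are immediate. Using the two kinematic equations of~\eqref{sys} to eliminate $\alpha\partial_t\zeta_1$ and $\partial_t\zeta_2$, the residuals of the first and second equations of~\eqref{SWSW} become, respectively, $\tfrac{1}{\mu}G_1(\psi_1,\psi_2)+\mathcal{A}_1+\mathcal{A}_2$ and $\tfrac{1}{\mu}G_2\psi_2+\mathcal{A}_2$, which~\eqref{G10} and~\eqref{G20} bound by $\mu C_0$ in $H^s$. The third equation is almost as easy: its residual is exactly $\mu\epsilon_2\nabla\NN_1$, and one controls $|\nabla\NN_1|_{H^s}$ by tame product estimates applied to the smooth expression defining $\NN_1$, using the boundedness of $\tfrac{1}{\mu}G_1(\psi_1,\psi_2)$ at one extra order of regularity --- precisely what the $H^{s+7/2}$ norms in $C_0$ provide through~\eqref{G10}.

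The fourth equation is the main obstacle. The convenient trick is to add $\gamma$ times the third equation of~\eqref{sys} to the fourth: after rearrangement, the residual $r_4$ of the fourth equation of~\eqref{SWSW} can be written as
\[r_4\;=\;\gamma\,\partial_t\bigl(H(\psi_1,\psi_2)-\nabla\psi_1\bigr)\;+\;\gamma\tfrac{\epsilon_2}{2}\nabla\bigl(|H(\psi_1,\psi_2)|^2-|\nabla\psi_1|^2\bigr)\;+\;\mu\epsilon_2\nabla(\NN_2+\gamma\NN_1).\]
The last summand is $O(\mu)$ in $H^s$ by the argument already used for $\NN_1$. For the middle one, factor $|H|^2-|\nabla\psi_1|^2=(H-\nabla\psi_1)\cdot(H+\nabla\psi_1)$ and combine~\eqref{H0} (applied at one order higher regularity, covered by the $W^{1,\infty}H^{s+7/2}$ hypothesis) with a Moser product estimate to get an $O(\mu)$ bound in $H^s$ after taking $\nabla$. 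The truly delicate step is the time-derivative term $\partial_t(H-\nabla\psi_1)$, for which~\eqref{H0} only provides a spatial bound: the cleanest way forward is to observe that $H-\nabla\psi_1$ depends smoothly on the geometric data $(\zeta_1,\zeta_2,\psi_1,\psi_2,b)$ with each Fr\'echet derivative itself $O(\mu)$ (which can be verified by differentiating~\eqref{cphi1}--\eqref{bphi1} in $t$ and repeating the expansion argument of~\S~\ref{opexpansion} on the resulting linearized problem), so that the chain rule combined with the $W^{1,\infty}$-in-time regularity of the solution yields $|\partial_t(H-\nabla\psi_1)|_{H^s}\leq \mu C_0$. Summing all contributions gives $|r_4|_{L^\infty H^s}\leq \mu C_0$, which completes the proof.
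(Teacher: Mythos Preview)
Your proposal is correct and follows essentially the same route as the paper's proof. In particular, for the delicate term $\partial_t(H(\psi_1,\psi_2)-\nabla\psi_1)$, the paper does exactly what you sketch: it differentiates the boundary value problem~\eqref{equ} satisfied by $u=\phi_1-\phi_1^{app,1}$ with respect to $t$, obtains estimates on the new data ($\partial_t\hh$, $\partial_t(P^\mu)\nabla^\mu_{X,z}u$, $\partial_t V$), and then reapplies the elliptic analysis of Appendix~\ref{PreuveG} (Steps~4 and~5) to bound $\nabla\partial_t u$ in $H^s$ --- which is precisely your ``differentiate the linearized problem and repeat the expansion argument'' step, stated in slightly more concrete terms.
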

\begin{proof}
 Let $t_0>d/2$ and $s\geq t_0+1/2$. Let $U:=(\zeta_1,\zeta_2,\nabla\psi_1,\nabla\psi_2)$ be a solution of~\eqref{sys}, such that~\eqref{h} is satisfied, and $U\in \mathbf{H}^s$. It is straightforward to check that we have
\begin{equation} \left\{ \begin{array}{lr}
\multicolumn{2}{l}{\alpha \partial_t \zeta_1 + \nabla\cdot(h_1\nabla\psi_1)+\nabla\cdot(h_2\nabla\psi_2) =  \nabla\cdot(h_1\nabla\psi_1)+\nabla\cdot(h_2\nabla\psi_2)+\frac{1}{\mu}G_1(\psi_1,\psi_2),}  \\ 
\multicolumn{2}{l}{\partial_t \zeta_2 +\nabla\cdot(h_2\nabla\psi_2)  =  \nabla\cdot(h_2\nabla\psi_2)+\frac{1}{\mu}G_2\psi_2,}  \\
\multicolumn{2}{l}{\partial_t \nabla\psi_1 + \alpha \nabla \zeta_1 +\dfrac{\epsilon_2}{2}\nabla \left(|\nabla\psi_1|^2\right) =  \mu\epsilon_2\nabla\NN_1,} \\ 
\multicolumn{2}{l}{\partial_t \nabla\psi_2 + (1-\gamma)\nabla \zeta_2 + \gamma \alpha\nabla \zeta_1 +\dfrac{\epsilon_2}{2}\nabla\left( |\nabla\psi_2|^2 \right) =  \gamma\partial_t( H(\psi_1,\psi_2)-\nabla\psi_1)}\\&+\frac{\epsilon_2}{2} \gamma \nabla(|H(\psi_1,\psi_2)|^2-|\nabla\psi_1|^2)+\mu\epsilon_2 \nabla\NN_2+\mu\gamma\epsilon_2 \nabla\NN_1. \end{array} \right. \end{equation}
Except for $\partial_t( H(\psi_1,\psi_2)-\nabla\psi_1)$, the right-hand side is immediately bounded by $\mu C_0$, thanks to the estimates~\eqref{G20},~\eqref{G10} and~\eqref{H0}. The estimate on the derivative is obtained as in the following.

We use the study of Appendix~\ref{PreuveG}: we derive~\eqref{equ} with respect to $t$ on both sides and get
\begin{equation}\label{eqdtu}\left\{\begin{array}{ll}
 \nabla^\mu_{X,z}\cdot P^\mu \nabla^\mu_{X,z}(\partial_t u) =\mu^2\ \nabla_{X,z}^\mu\cdot \partial_t\hh - \nabla^\mu_{X,z}\cdot \partial_t(P^\mu) \nabla^\mu_{X,z} u& \mbox{in } \SS^+, \\
\partial_t u =0 & \mbox{on } \{z=1\}, \\
 \partial_n (\partial_t u) =\nabla\cdot \partial_t V +\mu^2 e_{d+1}\cdot \partial_t\hh-e_{d+1}\cdot \partial_t(P^\mu)\nabla^\mu_{X,z} u & \mbox{on } \{z=0\},
\end{array}
\right.\end{equation}
We now need estimates on the right-hand side of the system. Directly from the definition of $\hh$, we have 
\begin{equation}\big\|\partial_t \hh\big\|_{H^{s+3/2,1}}\leq C_0.\end{equation}
Thanks to the Step 4 of \S~\ref{Hsestimates}, we have 
\[\big\|\partial_t(P^\mu) \nabla^\mu_{X,z} u \big\|_{H^{s+3/2,1}}\leq C_0.\]
Finally, we can obtain the estimate on $\partial_t V$, using the same method as here on the lower layer:
\[\big\vert \partial_t V\big\vert_{H^s}\leq \mu^2 C(\frac{1}{h},\beta \big\vert b\big\vert_{W^{1,\infty}H^{s+5/2}},\big|(\epsilon_1\zeta_1,\epsilon_2\zeta_2,\nabla\psi_1,\nabla\psi_2)\big|_{\mathbf{H}^{s-1}}).\]
Then we use the study of Appendix~\ref{PreuveG}, and obtain the estimates of Steps 4 and 5 for $\partial_t u$, and use them as in Proposition~\ref{PH} in order to obtain the desired inequality:
\[\big\vert\partial_t( H(\psi_1,\psi_2)-\nabla\psi_1) \big\vert_{H^s}= \big\vert\nabla \partial_t u \big\vert_{H^s} \leq \mu C_0.\]
\end{proof}

\paragraph{Conservation laws} The first two equations of~\eqref{SWSW} reveal the conservation of mass, since a straightforward linear combination gives 
\begin{equation} \left\{ \begin{array}{l}
\partial_t h_1 + \epsilon_2\nabla\cdot(h_1\nabla\psi_1) =  0,  \\ 
\partial_t h_2 +\epsilon_2 \nabla\cdot(h_2\nabla\psi_2)  =  0. \end{array} \right. \end{equation}
We can play with the system to obtain other conservation laws. The conservations of total momentum and energy are given by
\begin{eqnarray*} \partial_t(\gamma h_1 u_1+h_2 u_2)+\nabla p+(\gamma h_1+h_2)\beta\nabla b+\nabla\cdot(\gamma h_1 u_1\otimes u_1+h_2 u_2\otimes u_2 )&=&0,\\
\partial_t\Big(\frac{1}{2}\big(\gamma h_1|u_1|^2+ h_2|u_2|^2\big)+p\Big)+\frac{1}{2}\nabla\cdot(\gamma h_1|u_1|^2 u_1+h_2|u_2|^2 u_2) \quad && \\
+\nabla\cdot(\gamma h_1^2  u_1 + h_2^2 u_2 + \gamma h_1 h_2 (u_1+u_2))  +(\gamma h_1 u_1+h_2 u_2)\beta\nabla b&=&0,
\end{eqnarray*}
with the notations $h_1=1+\epsilon_1\zeta_1-\epsilon_2\zeta_2$, $h_2=\frac{1}{\delta}-\beta b+\epsilon_2\zeta_2$, $u_i=\epsilon_2\nabla\psi_i$ $(i=1,2)$, and the ``pressure'' $p:=\frac{1}{2}\gamma h_1^2+\frac{1}{2}h_2^2+\gamma h_1h_2$.

\paragraph{Dispersion relations} When we calculate the linearized dispersion relations as in \S~\ref{linearized}, we obtain that $\omega_\pm^2(k)$ satisfy:
\[\omega_\pm^2(k) =\frac{1+\delta\pm\sqrt{(1-\delta)^2+4\gamma\delta}}{2\delta}|k|^2\]
This dispersion relation is not the same as the one of the full system (it corresponds to the first order of the expansion in $\mu$ of the solutions of~\eqref{lineq}), but we still have the condition $\gamma<1$, for the system to be linearly well-posed. The figure~\ref{dispsw} presents shallow water/shallow water model dispersion, compared with the dispersion of the full system, with the parameters $\mu=0.1$, $\delta=1/3$, $\gamma=2/3$.

\begin{figure}[htb]
\centering
\psfrag{k}{\begin{footnotesize}$k$                          \end{footnotesize}}
\psfrag{w}{\begin{footnotesize}$\omega$                          \end{footnotesize}}
 \includegraphics[width=0.7\textwidth]{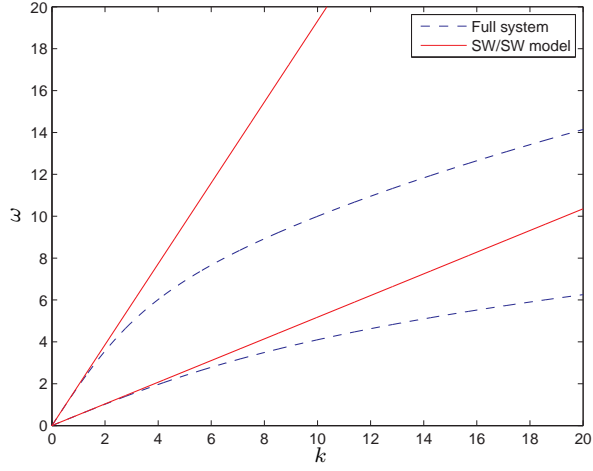}
 \caption{The shallow water/shallow water model dispersion}
\label{dispsw}
\end{figure}

\subsubsection{The Boussinesq/Boussinesq regime: $\mu \sim \epsilon_2 \sim \epsilon_1 \ll 1$}
\label{sectbouss}
In this regime, the shallowness and the nonlinearity are supposed to be small and of the same size. This time, we use the second order of the expansions, and obtain
\begin{equation}
\label{BoussBouss}
\left\{ \begin{array}{l}
\alpha\partial_t \zeta_1 + \nabla\cdot(h_1\nabla\psi_1)+\nabla\cdot(h_2\nabla\psi_2) = \mu \big(\frac{-1}{3}\Delta\nabla\cdot \nabla\psi_1 + \nabla\cdot\mathcal{T}[h_2,\beta b]\nabla\psi_2 \\ \multicolumn{1}{r}{ -\frac{1}{2\delta}\Delta\nabla\cdot \nabla\psi_2 \big) ,}  \\ 
\partial_t \zeta_2 +\nabla\cdot(h_2\nabla\psi_2) = \mu \left(\nabla\cdot\mathcal{T}[h_2,\beta b]\nabla\psi_2 \right) , \\
\partial_t \nabla\psi_1 +\alpha \nabla \zeta_1 +\dfrac{\epsilon_2}{2}\nabla \left(|\nabla\psi_1|^2\right)  = 0, \\ 
\partial_t \nabla\psi_2 + (1-\gamma)\nabla \zeta_2 + \alpha\gamma \nabla \zeta_1 +\dfrac{\epsilon_2}{2}\nabla\left( |\nabla\psi_2|^2 \right) = \mu\gamma\partial_t\big(\frac{1}{\delta}\nabla \Delta\psi_2+\frac{1}{2}\nabla\Delta\psi_1 \big),
\end{array} \right.  \end{equation}
with $\mathcal{T}[h,b]V$ defined as in Proposition~\ref{defT}.
\begin{Rem}
If the bottom is flat, then $\mathcal{T}[h_2,\beta b]\nabla\psi_2$ is simply $\frac{-1}{3\delta^3}\nabla\Delta\psi_2$.
\end{Rem}

\paragraph{Model with improved frequency dispersion}
This model is linearly ill-posed. Fortunately, following~\cite{BCL,BCS1}, we can easily derive asymptotically equivalent models, with coefficients which can be chosen so that the system is well-posed. For simplicity, we assume now to be in the case of flat bottom (see~\cite{Florent} for the varying bottom case).

We rewrite the system~\eqref{BoussBouss} with new variables: $u_i:=\nabla\phi_i(z_i)$ $(i=1,2)$. From the calculations of \S~\ref{opexpansion}, we obtain
\begin{eqnarray*}
 \phi_1^{app,1}(z)&=&\psi_1-\mu(\frac{(z-1)^2}{2}+(z-1))\Delta\psi_1-\mu\frac{1}{\delta}(z-1)\Delta\psi_2,\\
 \phi_2^{app,1}(z)&=&\psi_2-\mu\frac{1}{\delta^2}(\frac{z^2}{2}+z)\Delta\psi_2.
\end{eqnarray*}
We then define $u_1$ and $u_2$ as in the following:
\begin{eqnarray*}
 u_1&:=&\nabla\phi_1^{app,1}(z_1)=\nabla\psi_1-\mu b_1\Delta\nabla\psi_1-\mu \frac{1}{\delta}a_1\Delta\nabla\psi_2,\\
 u_2&:=&\nabla\phi_2^{app,1}(z_2)=\nabla\psi_2-\mu\frac{1}{\delta^2}a_2\Delta\nabla\psi_2,
\end{eqnarray*}
with $z_1\in (0,1)$ for the upper fluid, and $z_2\in(-1,0)$ for the lower fluid, and the coefficients 
\[a_1:=z_1-1\in[-1,0] \quad ; \quad a_2:=\frac{z_2^2}{2}+z_2\in[-1/2,0]   \quad ; \quad b_1:=\frac{a_1^2}{2}+a_1\in[-1/2,0]. \]

We plug this into~\eqref{BoussBouss} and obtain
\begin{equation}
\vspace{1mm}\label{coefc} \left\{ \begin{array}{l}
\alpha\partial_t \zeta_1 + \nabla\cdot(h_1u_1)+ \nabla\cdot(h_2u_2)+\mu\big( \frac{1+3b_1}{3}\nabla\cdot \Delta u_1\\
\multicolumn{1}{r}{+(\frac{1+2a_1}{2\delta}+\frac{1+3a_2}{3\delta^3})\nabla\cdot\Delta u_2 \big) = 0,}  \\ 
\partial_t \zeta_2 +\nabla\cdot(h_2u_2) +\mu\frac{1+3a_2}{3\delta^3}\nabla\cdot\Delta u_2= 0,\\
(1+\mu b_1 \Delta)\partial_t u_1 +\mu \frac{a_1}{\delta} \Delta\partial_t u_2 +\alpha \nabla \zeta_1 +\dfrac{\epsilon_2}{2}\nabla \left(|u_1|^2\right)  = 0, \\ 
(1+\mu (\frac{a_2}{\delta^2}-\frac{\gamma}{\delta}) \Delta)\partial_t u_2 -\mu \frac{\gamma}{2} \Delta\partial_t u_1 + (1-\gamma)\nabla \zeta_2 + \alpha\gamma \nabla \zeta_1 \\\multicolumn{1}{r}{+\dfrac{\epsilon_2}{2}\nabla\left( |u_2|^2 \right) = 0.}
\end{array} \right.  \end{equation}

\begin{Rem}
 If we choose $a_1=-\frac{1}{2}$, $a_2=-\frac{1}{3}$ and $b_1=-\frac{1}{3}$, we obtain the classical ``layer-mean'' model~\eqref{BoussBoussLM}, introduced by Choi and Camassa in~\cite{CCa2}. As we see below, this system is linearly ill-posed. One of the interests of~\eqref{coefc} is to offer a large class of equivalent models, with parameters which can be chosen so that the system is linearly well-posed.
\end{Rem}

\begin{proposition} \label{consbousscoef}
The full system~\eqref{sys} is consistent with~\eqref{coefc}, at the precision $\mu^2 C_1$, with \[\begin{array}{r}C_1=C(\frac{1}{h},\beta \big\vert b\big\vert_{W^{1,\infty}H^{s+11/2}},\epsilon_2\big\vert\zeta_2\big\vert_{W^{1,\infty}H^{s+9/2}},\epsilon_1\big\vert\zeta_1\big\vert_{W^{1,\infty}H^{s+7/2}},\big\vert\nabla\psi_1\big\vert_{W^{1,\infty}H^{s+11/2}},\\\big\vert\nabla\psi_2\big\vert_{W^{1,\infty}H^{s+11/2}}).\end{array}\]
\end{proposition}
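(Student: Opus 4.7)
The plan is to follow the blueprint of the proof of Proposition~\ref{consSW}, but working one order higher in $\mu$ and using the second-order operator expansions~\eqref{G2}, \eqref{G11}, \eqref{H}. Let $U=(\zeta_1,\zeta_2,\nabla\psi_1,\nabla\psi_2)$ be a sufficiently smooth solution of~\eqref{sys} in the Boussinesq regime $\epsilon_1\sim\epsilon_2\sim\mu$, satisfying~\eqref{h} with the regularity demanded by $C_1$. Since $\epsilon_2\sim\mu$, the nonlinear terms $\mu\epsilon_2\nabla\NN_i$ are $O(\mu^2)$ in $H^s$, and after dividing the first two equations of~\eqref{sys} by $\mu$, the residuals of the second-order expansions~\eqref{G2} and~\eqref{G11} become $O(\mu^2)$ as well. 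This first substitution brings~\eqref{sys} to~\eqref{BoussBouss} up to an $O(\mu^2)$ remainder.

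The next step is the change of unknowns from $(\nabla\psi_1,\nabla\psi_2)$ to $(u_1,u_2)$. By the explicit definitions,
\[
  u_1 = \nabla\psi_1 - \mu b_1 \Delta\nabla\psi_1 - \tfrac{\mu}{\delta}a_1\Delta\nabla\psi_2, \qquad u_2 = \nabla\psi_2 - \tfrac{\mu}{\delta^2}a_2\Delta\nabla\psi_2,
\]
so that $\nabla\psi_i = u_i + O(\mu)$, and inverting the linear map gives the same identities with $\nabla\psi_j$ replaced by $u_j$ in the $\mu$-corrections, modulo $O(\mu^2)$. This has three consequences: the quadratic terms $\tfrac{\epsilon_2}{2}\nabla(|\nabla\psi_i|^2)$ and the $O(\mu)$ right-hand sides of the kinematic equations of~\eqref{BoussBouss} are unchanged up to $O(\mu^2)$ when $\nabla\psi_i$ is replaced by $u_i$; the leading $\partial_t\nabla\psi_i$ terms produce the dispersive prefactors $(1+\mu b_1\Delta)\partial_t u_1+\mu\tfrac{a_1}{\delta}\Delta\partial_t u_2$ and $(1+\mu\tfrac{a_2}{\delta^2}\Delta)\partial_t u_2$ of~\eqref{coefc}; and combining $\nabla\cdot(h_i\nabla\psi_i) = \nabla\cdot(h_iu_i)+O(\mu^2)$ with the right-hand sides of~\eqref{BoussBouss}, using the flat-bottom identity $\mathcal{T}[h_2,0]\nabla\psi_2 = -\tfrac{1}{3\delta^3}\nabla\Delta\psi_2$ at leading order, reproduces exactly the coefficients $\tfrac{1+3b_1}{3}$, $\tfrac{1+2a_1}{2\delta}+\tfrac{1+3a_2}{3\delta^3}$, $\tfrac{1+3a_2}{3\delta^3}$ of~\eqref{coefc}. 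This is a routine algebraic verification.

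The main obstacle will be, as in Proposition~\ref{consSW}, the control at precision $\mu^2$ of the extra term $\gamma\partial_t(\nabla\psi_1 - H(\psi_1,\psi_2))$ in the fourth equation of~\eqref{sys}. The spatial estimate~\eqref{H} does not suffice by itself; I would follow the argument at the end of the proof of Proposition~\ref{consSW} and differentiate in $t$ the elliptic boundary-value problem satisfied by $u := \phi_1 - \phi_1^{app,2}$, obtaining an analogue of~\eqref{eqdtu} for the higher-order approximate solution. Applying the variational $H^1$ bound and the trace inequality used in the proof of Proposition~\ref{PH}, together with $\partial_t$-versions of the estimates on $\hh$, $P^\mu$ and $V$, yields
\[
  \bigl|\partial_t(\nabla\psi_1 - H(\psi_1,\psi_2))\bigr|_{H^s} \leq \mu^2 C_1,
\]
the necessity of controlling $\partial_t$ of $\zeta_i$, $\psi_i$ and $b$ being precisely what forces $C_1$ to be measured in the $W^{1,\infty}H^{\cdot}$ norms stated. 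Together with the $O(\mu^2)$ control of all other residuals established above, this closes the proof.
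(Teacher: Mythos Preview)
Your approach is essentially the paper's: first establish consistency with~\eqref{BoussBouss} using the second-order expansions~\eqref{G2}, \eqref{G11}, \eqref{H} together with $\epsilon_2\sim\mu$ (so that $\mu\epsilon_2\nabla\NN_i=O(\mu^2)$), then reduce~\eqref{coefc} to~\eqref{BoussBouss} via the affine change of unknowns $u_i\leftrightarrow\nabla\psi_i$, and handle the time derivative of the $H$-remainder by differentiating the elliptic problem as in Proposition~\ref{consSW}, but with the higher-order corrector $u=\phi_1-\phi_1^{app,2}$.

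There is one slip in your final displayed estimate. The quantity $\partial_t(\nabla\psi_1-H(\psi_1,\psi_2))$ is only $O(\mu)$, not $O(\mu^2)$: its leading part is precisely the dispersive term $\mu\gamma\partial_t\big(\tfrac{1}{\delta}\nabla\Delta\psi_2+\tfrac12\nabla\Delta\psi_1\big)$ that appears on the right of the fourth equation of~\eqref{BoussBouss}. What you actually need (and what the machinery with $u=\phi_1-\phi_1^{app,2}$ does give, since $\nabla u|_{z=0}=H-\nabla\phi_1^{app,2}|_{z=0}$) is
\[
\Big|\partial_t\Big(H(\psi_1,\psi_2)-\nabla\psi_1-\mu\nabla\big(\tfrac12\Delta\psi_1+\tfrac{1}{\delta}\Delta\psi_2\big)\Big)\Big|_{H^s}\leq \mu^2 C_1.
\]
With this correction the argument closes exactly as in the paper.
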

 \begin{proof}
 Let $t_0>d/2$ and $s\geq t_0+1/2$. Let $U:=(\zeta_1,\zeta_2,\nabla\psi_1,\nabla\psi_2)$ be a solution of~\eqref{sys}, such that~\eqref{h} is satisfied, and $U\in \mathbf{H}^{s+2}$. 

We first give the proof for $a_1=b_1=a_2=0$, corresponding to the original system~\eqref{BoussBouss}. We just have to plug $U$ in~\eqref{BoussBouss}, as in the proof of Proposition~\ref{consSW}. Since $\epsilon_2\sim\mu$, we have $\big\vert \mu\epsilon_2\nabla\NN_1\big\vert_{H^s}+\big\vert \mu\epsilon_2\nabla\NN_2\big\vert_{H^s}\leq \mu^2 C_1$. The other residuals are bounded by $\mu^2 C_1$ thanks to the estimates~\eqref{G2},~\eqref{G11} and~\eqref{H} with $\epsilon_2 \ll 1$, and the equivalent estimates on the derivatives which are obtained as in the proof of Proposition~\ref{consSW}.

The general case is obtained when we substitute $\nabla\psi_1-\mu b_1\Delta\nabla\psi_1-\mu \frac{1}{\delta}a_1\Delta\nabla\psi_2$ for $u_1$, and  $\nabla\psi_2-\mu\frac{1}{\delta^2}a_2\Delta\nabla\psi_2$ for $u_2$ in~\eqref{coefc}. We obtain~\eqref{BoussBouss} up to additional terms that are clearly bounded by $\mu^2 C_1$.
\end{proof}

\paragraph{Dispersion relations} As we have said previously, the coefficients can be chosen so that the system~\eqref{coefc} is linearly well-posed. Indeed, it is straightforward to check from the linearized system that $\omega_\pm^2(k)$, corresponding to plane-wave solutions $e^{ik\cdot X-i\omega_\pm t}$, must be the solutions of the equation
\begin{equation} 
\label{quadeq}
\omega^4-A(\mu |k|^2)|k|^2\omega^2+B(\mu |k|^2)|k|^4=0,
\end{equation}
with \[\begin{array}{l}
A(Y):=\frac{(1-\beta_1 Y)(1+\frac{\gamma\delta(a_1+1)-a_2}{\delta^2} Y)+\gamma(\frac{1}{\delta}-(\alpha_1+\alpha_2) Y)(1-(b_1+\frac{1}{2}) Y)+(1-\gamma)(\frac{1}{\delta}-\alpha_2 Y)(1-b_1 Y)}{(1-b_1 Y)(1-\frac{a_2-\gamma\delta}{\delta^2} Y)+\frac{\gamma}{2\delta} a_1 Y^2},\\
B(Y):=(1-\gamma)\frac{(\frac{1}{\delta}-\alpha_2 Y)(1-\beta_1 Y)}{(1-b_1Y)(1-\frac{a_2-\gamma\delta}{\delta^2}Y)+\frac{\gamma}{2\delta} a_1 Y^2},
\end{array}\]
and the notations
\[\alpha_1:=\frac{1+2 a_1}{2\delta} \qquad;\qquad \alpha_2:=\frac{1+3a_2}{3\delta^3} \qquad ; \qquad \beta_1:=\frac{1+3b_1}{3}.\] 

In order to have two positive solutions of~\eqref{quadeq}, the coefficients have to satisfy $a_2\leq -1/3$, and $b_1\leq -1/2$. We see that the original system~\eqref{BoussBouss}, as well as the classical layer-mean model~\eqref{BoussBoussLM} are ill-posed. However, there exists sets of parameters $a_1$, $a_2$, $b_1$ such that the generalized system is well-posed. Moreover, we can choose the coefficients such that the dispersions meet with the ones of the full system, at the order 3 in $\mu|k|^2$. We present in figure~\ref{dispcoef} the difference between the dispersion of the full system and the one of the Boussinesq/Boussinesq model for three sets of parameters: $a_1=b_1=a_2=0$ corresponding to the original system~\eqref{BoussBouss}, $a_1=-\frac{1}{2}$, $a_2=-\frac{1}{3}$ and $b_1=-\frac{1}{3}$ corresponding to the layer-mean system~\eqref{BoussBoussLM}, and finally $a_1\approx0.4714$, $a_2\approx-0.3942$ and $b_1=-1$ corresponding to optimized parameters in~\eqref{coefc}. Moreover, we chose $\mu=0.1$, $\delta=1/3$, and $\gamma=2/3$. Note that except for the last set of parameters, the system is linearly ill-posed, so that the computation breaks for high wave numbers.
\begin{figure}[htb]
\centering
\psfrag{k}{\begin{footnotesize}$k$                          \end{footnotesize}}
\psfrag{wp}{\begin{footnotesize}$\omega_+$                          \end{footnotesize}}
\psfrag{wm}{\begin{footnotesize}$\omega_-$                          \end{footnotesize}}
 \includegraphics[width=0.49\textwidth]{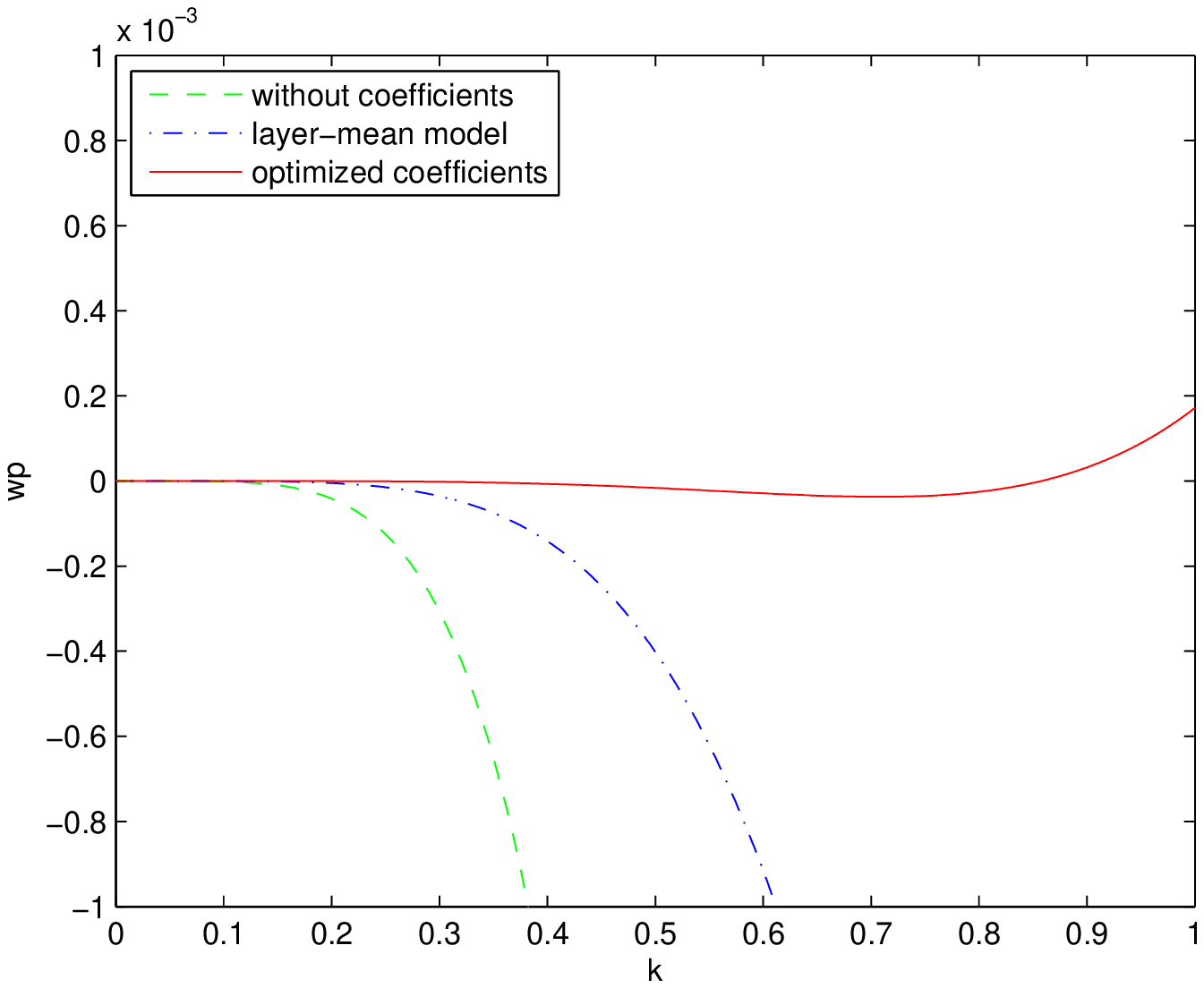} \includegraphics[width=0.49\textwidth]{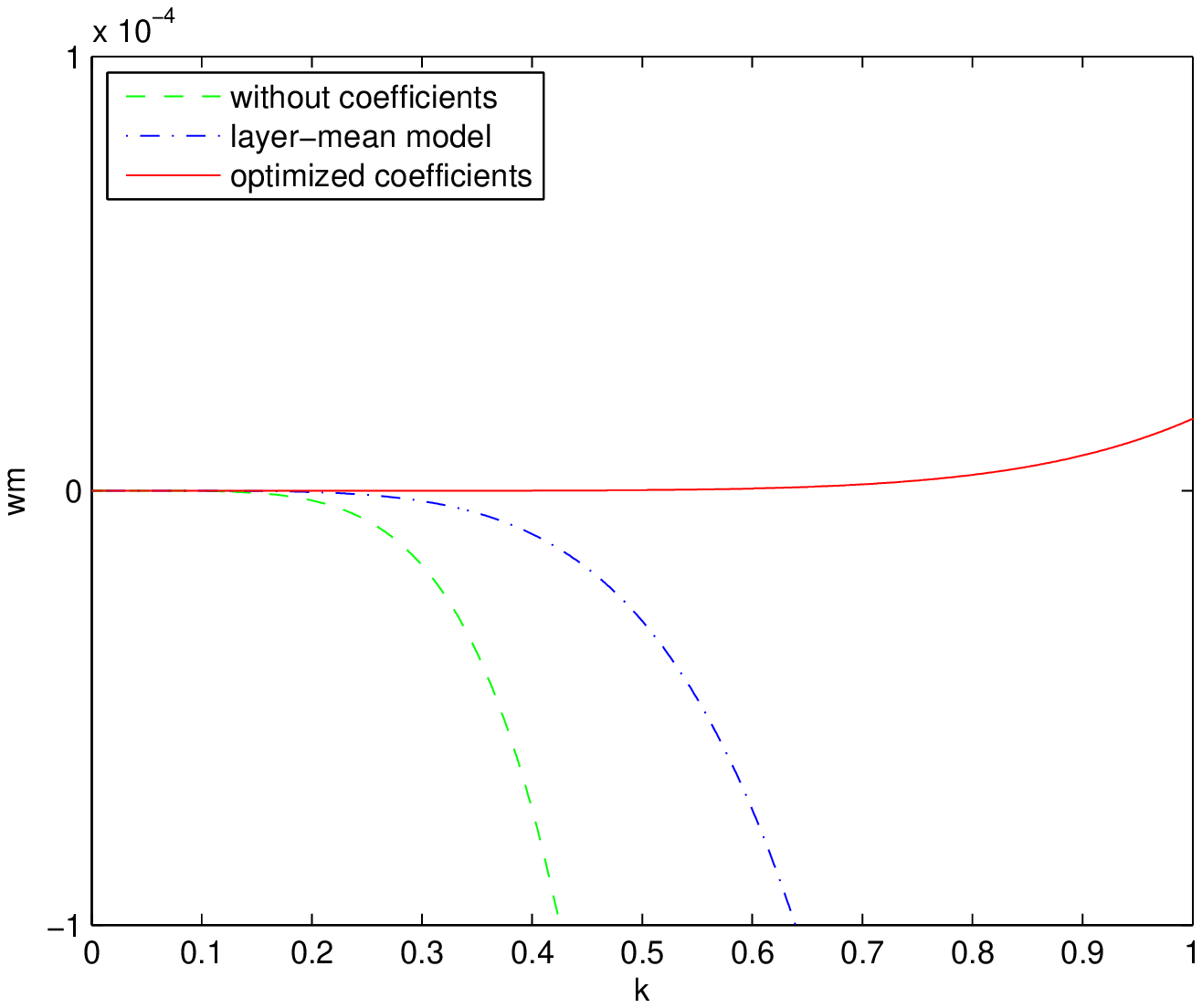}
 \caption{The Boussinesq/Boussinesq models dispersion error}
\label{dispcoef}
\end{figure}

\subsubsection{The higher order system}
We are now back in the strong linearity regime, allowing large amplitude ($\epsilon_1$, $\epsilon_2 =O(1)$). But now we use the higher order expansions~\eqref{G2},~\eqref{G11} and~\eqref{H}, and thus obtain the strongly nonlinear model
\begin{equation}\label{HO} \left\{ \begin{array}{lr}
\multicolumn{2}{l}{\alpha\partial_t \zeta_1 + \mathcal{A}_1+\mathcal{A}_2 = \mu \Big(\nabla\cdot\mathcal{T}_1 + \nabla\cdot\mathcal{T}_2 -\frac{1}{2}\nabla\cdot (h_1^2\nabla \mathcal{A}_2)-\nabla\cdot\big(h_1 \epsilon_1\nabla\zeta_1  \mathcal{A}_2)  \Big),} \\ 
\multicolumn{2}{l}{\partial_t \zeta_2 +\mathcal{A}_2 = \mu \nabla\cdot\mathcal{T}_2 ,} \\
\multicolumn{2}{l}{\partial_t \nabla\psi_1 + \alpha \nabla \zeta_1 +\dfrac{\epsilon_2}{2}\nabla \left(|\nabla\psi_1|^2\right)  =  \mu \epsilon_2\nabla\NN_1 ,}\\ 
\multicolumn{2}{l}{\partial_t \nabla\psi_2 + (1-\gamma)\nabla \zeta_2 + \gamma \alpha\nabla \zeta_1 +\dfrac{\epsilon_2}{2}\nabla\left( |\nabla\psi_2|^2 \right) = \mu\Big(\gamma\partial_t\nabla \mathcal{H}+\gamma\epsilon_2\nabla(\nabla\psi_1\cdot \nabla\mathcal{H})}\\ & +\epsilon_2\nabla \NN_2+\gamma \epsilon_2 \nabla \NN_1 \Big),
\end{array} \right. \end{equation}
where we have used the following notations:
\[\begin{array}{ll}
 \mathcal{A}_1:=\nabla\cdot  (h_1\nabla\psi_1), & \mathcal{A}_2:=\nabla\cdot  (h_2\nabla\psi_2),\\
\mathcal T_1:=\mathcal T[h_1,\epsilon_2\zeta_2]\nabla\psi_1, & \mathcal T_2:=\mathcal T[h_2,\beta b]\nabla\psi_2,\\
\multicolumn{2}{l}{\mathcal{H}:=h_1(\nabla\cdot(h_1\nabla\psi_1)+\nabla\cdot(h_2\nabla\psi_2)-\frac{1}{2}h_1\Delta\psi_1-\epsilon_1\nabla\zeta_1\cdot \nabla\psi_1),}\\
\multicolumn{2}{l}{\NN_1:= \frac{(\epsilon_1\nabla\zeta_1\cdot\nabla\psi_1-\nabla\cdot(h_1\nabla\psi_1)-\nabla\cdot(h_2\nabla\psi_2))^2}{2},}\\
\multicolumn{2}{l}{\NN_2:= \frac{(\epsilon_2\nabla\zeta_2\cdot\nabla\psi_2-\nabla\cdot(h_2\nabla\psi_2))^2-\gamma(\epsilon_2\nabla\zeta_2\cdot\nabla\psi_1-\nabla\cdot(h_2\nabla\psi_2))^2}{2}.}
\end{array}\]

\begin{proposition} \label{consHO}
The full system~\eqref{sys} is consistent with~\eqref{HO}, at the precision $\mu^2 C_1$, with \[\begin{array}{r}C_1=C(\frac{1}{h},\beta \big\vert b\big\vert_{W^{1,\infty}H^{s+11/2}},\epsilon_2\big\vert\zeta_2\big\vert_{W^{1,\infty}H^{s+9/2}},\epsilon_1\big\vert\zeta_1\big\vert_{W^{1,\infty}H^{s+7/2}},\big\vert\nabla\psi_1\big\vert_{W^{1,\infty}H^{s+11/2}},\\\big\vert\nabla\psi_2\big\vert_{W^{1,\infty}H^{s+11/2}}).\end{array}\]
\end{proposition}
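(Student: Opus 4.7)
The plan is to plug a sufficiently smooth solution $U:=(\zeta_1,\zeta_2,\nabla\psi_1,\nabla\psi_2)$ of~\eqref{sys} into~\eqref{HO} and show that each of the four equations is satisfied up to a residual of $L^\infty H^s$-norm bounded by $\mu^2 C_1$. The structure will mirror the proofs of Propositions~\ref{consSW} and~\ref{consbousscoef}, but now using the second-order expansions~\eqref{G2},~\eqref{G11},~\eqref{H} instead of the first-order ones, while retaining $\epsilon_1,\epsilon_2=O(1)$.

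The first two equations of~\eqref{HO} are immediate: after substituting $\frac{1}{\mu}G_1(\psi_1,\psi_2)$ for $\alpha\partial_t\zeta_1$ and $\frac{1}{\mu}G_2\psi_2$ for $\partial_t\zeta_2$, their residuals are exactly (up to the factor $\frac{1}{\mu}$) the quantities controlled by~\eqref{G11} and~\eqref{G2}, respectively, so both are $O(\mu^2 C_1)$. For the third equation, the residual reduces to $\mu\epsilon_2\nabla(\NN_1-\NN_1^{HO})$ where $\NN_1^{HO}$ is the simplified version appearing in~\eqref{HO}; using~\eqref{G10} to replace $\frac{1}{\mu}G_1(\psi_1,\psi_2)$ by $-(\mathcal{A}_1+\mathcal{A}_2)$ up to an $O(\mu)$ error, together with $\frac{1}{1+\mu\epsilon_1^2|\nabla\zeta_1|^2}=1+O(\mu)$, yields $\NN_1-\NN_1^{HO}=O(\mu)$ in the relevant Sobolev norm, hence the desired $\mu^2 C_1$ bound after multiplication by $\mu\epsilon_2$.

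The fourth equation is the delicate point and requires more bookkeeping. First, I would use the third equation of the full system to rewrite $\partial_t\nabla\psi_1$ as $-\alpha\nabla\zeta_1-\frac{\epsilon_2}{2}\nabla|\nabla\psi_1|^2+\mu\epsilon_2\nabla\NN_1$, which is precisely how the term $\gamma\alpha\nabla\zeta_1$ appears in~\eqref{HO}. Next, the static part $\frac{\gamma\epsilon_2}{2}\nabla(|H(\psi_1,\psi_2)|^2-|\nabla\psi_1|^2)$ is handled directly by~\eqref{H}: expanding $|H|^2$ around $|\nabla\psi_1|^2$ produces $\mu\gamma\epsilon_2\nabla(\nabla\psi_1\cdot\nabla\mathcal{H})$ with an $O(\mu^2)$ remainder. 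What remains is the contribution $\gamma\partial_t\bigl(H(\psi_1,\psi_2)-\nabla\psi_1-\mu\nabla\mathcal{H}\bigr)$, which must be estimated at precision $\mu^2 C_1$; this is the main obstacle.

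I expect this last estimate to be the hard step, because the static expansion~\eqref{H} loses half an order in $\mu$ due to the horizontal scaling (as pointed out in the remark following Proposition~\ref{PH}), so bluntly differentiating it would not suffice. The strategy is the one already used in Proposition~\ref{consSW} but now applied to the higher-order approximate solution $\phi_1^{app,2}$ (defined in the remark following Proposition~\ref{G1}): I would differentiate in $t$ the elliptic boundary value problem~\eqref{eqdtu} satisfied by $u:=\phi_1-\phi_1^{app,2}$, control the source terms $\partial_t\hh$ and $\partial_t(P^\mu)\nabla^\mu_{X,z}u$ via Step~4 of \S~\ref{Hsestimates} in Appendix~\ref{PreuveG}, then invoke the $H^{s+1/2,1}$ elliptic estimate from that appendix together with the same trace inequality used in Proposition~\ref{PH} to conclude $\bigl|\partial_t(H-\nabla\psi_1-\mu\nabla\mathcal{H})\bigr|_{H^s}\leq \mu^2 C_1$. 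The extra regularity in the definition of $\mathbf{H}^s$ accounts for the loss coming from the $\frac{1}{\sqrt\mu}$ factor in the trace bound. Gathering all four residual estimates then yields the stated consistency.
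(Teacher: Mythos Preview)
Your proposal is correct and follows essentially the same approach as the paper's proof: plug the solution of~\eqref{sys} into~\eqref{HO}, control the residuals of the first three equations directly via~\eqref{G2},~\eqref{G11},~\eqref{G10}, and handle the time-derivative term $\gamma\partial_t(H-\nabla\psi_1-\mu\nabla\mathcal{H})$ by differentiating the elliptic problem for $\phi_1-\phi_1^{app,2}$ and repeating the argument of Proposition~\ref{consSW} at one higher order. The paper states this in two sentences, while you have spelled out the term-by-term matching in the fourth equation and the reason the higher-order approximate $\phi_1^{app,2}$ is needed; the underlying argument is identical.
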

 \begin{proof}
 Let $t_0>d/2$ and $s\geq t_0+1/2$. Let $U:=(\zeta_1,\zeta_2,\nabla\psi_1,\nabla\psi_2)$ be a solution of~\eqref{sys}, such that~\eqref{h} is satisfied, and $U\in \mathbf{H}^{s+2}$. 
We plug $U$ in~\eqref{HO}, and thanks to the estimates~\eqref{G2},~\eqref{G11} and~\eqref{H}, and the equivalent estimates on the derivatives are obtained as in the proof of Proposition~\ref{consSW}, we can check that the residuals are bounded by $\mu^2 C_1$.
\end{proof}

\paragraph{Dispersion relations} The linearized system is the same as the one of~\eqref{BoussBouss}. So the system is linearly ill-posed, and we should derive models with parameters, to obtain well-posed systems.

\section{Convergence results}
\label{Solapp}
We show here how to use the consistency results obtained in Section~\ref{asymmod} to prove convergence results, stating that solutions of~\eqref{sys} - if they exist - remain close to the solutions of the asymptotic models that are symmetrizable hyperbolic system. 

\begin{Rem}It is not clear that each of our models can be written as a symmetrizable hyperbolic system. That is why we focus here on the shallow water/shallow water model~\eqref{SWSW}, in the flat-bottom case ($\beta=0$). We set $d=2$, and the case $d=1$ follows immediately. The case of the Boussinesq/Boussinesq models will be discussed in a later work. 
\end{Rem}

The analysis is based on classical results for quasilinear systems, which can be found for example in~\cite{Metivier2} and~\cite{Kato}, and that we recall here.
\begin{lemma} \label{estenergie}
 Let $s>\frac{d}{2}+1$ and $T>0$. We assume that $A_j$ are smooth functions of $u\in \RR^n$, such that the system 
 \begin{equation}\label{simpeqF}\partial_t U + \sum_{j=1}^d A_j(U)\partial_x U=F(t,x,U)\end{equation}
 is Friedrichs-symmetrizable. Moreover, we assume that $u \mapsto F(t,x,u)$ is a smooth function of $u\in \RR^n$, and that $F(t,x,u)$ is bounded in $H^s$,uniformly with respect to $t\in[0,T]$. Then for $g\in H^s(\RR^d)$, taking values in $\RR^n$, there exists $0<T'\leq T$ and a unique $U\in C^0([0,T');H^s(\RR^d))^n$ such that $U$ satisfies~\eqref{simpeqF} and $U(t=0)=g$. Moreover, $U$ belongs to $U\in C^0([0,T');H^{s})^n \cap C^1([0,T');H^{s-1})^n$, and if $U$ satisfies 
\[\big\vert U\big\vert_{W^{1,\infty}([0,T]\times \RR^d)}\leq M,\]
for $M>0$, then there are constants $C(M)$ and $K(M)$ such that
\[\big\vert U(t)\big\vert_{H^s}\leq Ce^{Kt} \big\vert g\big\vert_{H^s}+C\int_0^t e^{K(t-s)}\big\vert f(s)\big\vert_{H^s} ds,\]
with $f(t,x)=F(t,x,g)$.
\end{lemma}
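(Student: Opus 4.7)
The plan is to follow the classical approach for quasilinear Friedrichs-symmetrizable hyperbolic systems, as developed in~\cite{Kato,Metivier2}. First I would exhibit the Friedrichs symmetrizer, a smooth map $U \mapsto S(U)$ with values in symmetric positive definite matrices such that $S(U) A_j(U)$ is symmetric for each $j$. Multiplying~\eqref{simpeqF} on the left by $S(U)$ turns it into a symmetric hyperbolic system that is amenable to $L^2$-based energy estimates.

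To construct a solution, I would either use a parabolic regularization
\[\partial_t U^\varepsilon - \varepsilon \Delta U^\varepsilon + \sum_{j=1}^d A_j(U^\varepsilon)\partial_j U^\varepsilon = F(t,x,U^\varepsilon),\]
which admits a smooth local-in-time solution by a Banach fixed-point argument in $C^0([0,T_\varepsilon]; H^s)$, or iterate the linear problems $\partial_t U^{n+1} + \sum_j A_j(U^n)\partial_j U^{n+1} = F(t,x,U^n)$ and show the sequence is Cauchy in $L^\infty_t L^2_x$ while remaining bounded in $L^\infty_t H^s_x$. Uniqueness then follows by an $L^2$ estimate on the difference of two solutions with the same data, and the $C^1([0,T'); H^{s-1})$ regularity is obtained by reading $\partial_t U$ off of~\eqref{simpeqF}.

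The heart of the argument is the energy estimate. Applying $\Lambda^s$ to~\eqref{simpeqF}, taking the $L^2$ inner product against $S(U)\Lambda^s U$, and integrating by parts using the symmetry of $S(U)A_j(U)$, one obtains a differential inequality for $E(t) := (S(U)\Lambda^s U, \Lambda^s U)_{L^2}$. The lower-order terms generated are controlled via the Kato--Ponce commutator estimate
\[\bigl| [\Lambda^s, A_j(U)]\partial_j U \bigr|_{L^2} \leq C(|U|_{W^{1,\infty}})\, |U|_{H^s},\]
together with the tame composition estimate $|F(t,\cdot,U) - F(t,\cdot,0)|_{H^s} \leq C(|U|_{W^{1,\infty}}) |U|_{H^s}$ and the assumed $H^s$ bound on $f(t,x)=F(t,x,0)$ (up to the harmless substitution that isolates $f$). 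Since $S(U)$ is uniformly positive definite on the set $\{|U|_{L^\infty} \leq M\}$, $E(t)$ is equivalent to $|U(t)|_{H^s}^2$, and Gronwall's lemma then yields the claimed integral inequality with constants $C(M)$ and $K(M)$.

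The main obstacle is closing the a priori estimates uniformly in the regularization (or iteration) parameter on a time interval that depends only on $|g|_{H^s}$. This relies on the Sobolev embedding $H^{s-1} \hookrightarrow W^{1,\infty}$, which holds since $s > d/2+1$, to convert the $H^s$ control into the $W^{1,\infty}$ control needed to re-evaluate the commutator constants; one then shows that the life-span $T'$ can be chosen so that $|U^\varepsilon(t)|_{W^{1,\infty}} \leq M$ on $[0,T')$, and passes to the limit by weak-$\ast$ compactness in $L^\infty_t H^s_x$ together with strong convergence in $C^0([0,T']; H^{s-1})$ obtained via Aubin--Lions.
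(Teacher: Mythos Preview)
The paper does not prove this lemma at all: it is introduced as a classical result ``which can be found for example in~\cite{Metivier2} and~\cite{Kato}, and that we recall here,'' and is used as a black box in the subsequent convergence argument. Your outline is precisely the standard proof from those references (symmetrizer, commutator estimates via Kato--Ponce, Gronwall, regularization or iteration to construct solutions), so there is nothing to compare---you have simply supplied the argument the paper chose to cite. One small point: the lemma isolates $f(t,x)=F(t,x,g)$ with $g$ the initial datum rather than $F(t,x,0)$; your parenthetical ``harmless substitution'' is correct in spirit, but when writing this up you should make explicit the tame estimate $|F(t,\cdot,U)-F(t,\cdot,g)|_{H^s}\leq C(M,|g|_{H^s})|U-g|_{H^s}$ (or absorb the difference into the $K(M)|U|_{H^s}$ term) so that the stated inequality comes out exactly as claimed.
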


First, we remark that the shallow water/shallow water model~\eqref{SWSW}, in the flat-bottom case ($\beta=0$), can be written as a quasilinear system: 
\begin{equation}\label{simpeq}\partial_t U + A_1(U)\partial_x U+A_2(U)\partial_y U=0,\end{equation}
with the notation \begin{eqnarray*}U&:=&(h_1,h_2,u_{1x},u_{1y},u_{2x},u_{2y})\\
&=&(1+\epsilon_1\zeta_1-\epsilon_2\zeta_2,\frac{1}{\delta}+\epsilon_2\zeta_2,\epsilon_2\partial_x\psi_1,\epsilon_2\partial_y\psi_1,\epsilon_2\partial_x\psi_2,\epsilon_2\partial_y\psi_2),\end{eqnarray*} 
and the matrices 
\begin{eqnarray*}A_1(U)&:=&\begin{pmatrix}
 u_{1x} & 0 & h_1 & 0 & 0 & 0  \\
 0 & u_{2x} & 0 & 0 & h_2 & 0  \\
 1 & 1 & u_{1x} & u_{1y} & 0 & 0  \\
0&0&0&0&0&0\\
\gamma & 1 & 0 & 0 & u_{2x} & u_{2y} \\
0&0&0&0&0&0 \end{pmatrix} , \\
A_2(U)&:=&\begin{pmatrix}
 u_{1y} & 0 & 0 & h_1 & 0 & 0  \\
 0 & u_{2y} & 0 & 0 & 0 & h_2  \\
 0&0&0&0&0&0\\
 1 & 1 & u_{1x} & u_{1y} & 0 & 0  \\
 0&0&0&0&0&0\\
\gamma & 1 & 0 & 0 & u_{2x} & u_{2y}
\end{pmatrix}.\end{eqnarray*}

We prove now that the Cauchy problem associated with~\eqref{simpeq} is well-posed under some assumptions on the initial data, since the quasilinear system is Friedrichs-symmetrizable.
\begin{proposition}
 Let $s>\frac{d}{2}+1$. Let $U_0\in H^s(\RR^d)^6$, such that there exists $h>0$ such that for all $X$ in $\RR^d$, $U_0(X)$ satisfies the assumptions 
\begin{equation}\label{assumptions}h_1, h_2 > h, \ |u_{1x}^2+u_{1y}^2|,|u_{2x}^2+u_{2y}^2|< h, \ \text{ and }  (h_1-u_{1x}^2-u_{1y}^2)(h_2-u_{2x}^2-u_{2y}^2)> \gamma h_1h_2.\end{equation}
Then there exists $T'>0$ and a unique $U\in C^0([0,T');H^s(\RR^d))^6$ such that $U$ satisfies~\eqref{simpeq} and $U(t=0)=U_0$.
\end{proposition}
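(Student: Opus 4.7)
The plan is to apply Lemma~\ref{estenergie} directly to the system~\eqref{simpeq}, which is already in the required quasilinear form with $F\equiv0$. The only nontrivial task is to verify that~\eqref{simpeq} is Friedrichs-symmetrizable in a neighborhood of any state $U$ that satisfies~\eqref{assumptions}, i.e.\ to exhibit a symmetric positive-definite matrix $S(U)$, depending smoothly on $U$, such that both $S(U)A_1(U)$ and $S(U)A_2(U)$ are symmetric.

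The natural candidate is the Hessian $S(U):=E_d''(U)$ of the conserved energy density already identified in the paragraph on conservation laws,
\[E_d(U)\;=\;\frac{\gamma}{2}h_1|u_1|^2+\frac{1}{2}h_2|u_2|^2+\frac{\gamma}{2}h_1^2+\frac{1}{2}h_2^2+\gamma h_1 h_2,\]
viewed as a function of $U=(h_1,h_2,u_{1x},u_{1y},u_{2x},u_{2y})$. I would first check by a direct (routine but tedious) matrix computation that $S(U)A_1(U)$ is symmetric; the corresponding identity for $A_2(U)$ then follows from the rotational invariance of the system by swapping the roles of the coordinates.

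The crux is then the positive-definiteness of $S(U)$ under the hypothesis~\eqref{assumptions}. Writing a test vector as $(a_1,a_2,b_1,b_2)$ with $a_i\in\RR$ and $b_i\in\RR^2$, and completing squares in each $b_i$-block against its cross terms $2\gamma a_1 u_1\cdot b_1$ and $2a_2 u_2\cdot b_2$, positive-definiteness reduces (via a Schur-complement argument, using $h_1,h_2>0$) to the positive-definiteness of the $2\times 2$ matrix
\[\begin{pmatrix}\gamma\bigl(1-|u_1|^2/h_1\bigr) & \gamma\\ \gamma & 1-|u_2|^2/h_2\end{pmatrix}.\]
By Sylvester's criterion, this is positive-definite precisely when $|u_i|^2<h_i$ for $i=1,2$ and $(h_1-|u_1|^2)(h_2-|u_2|^2)>\gamma h_1h_2$, i.e.\ under exactly the conditions~\eqref{assumptions}.

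Since the set of $U$ satisfying~\eqref{assumptions} is open, the symmetrizer $S(U(t,X))$ remains uniformly positive-definite and $A_j(U)$ symmetric for all $(t,X)$ while the solution stays close in $L^\infty$ to $U_0$; Lemma~\ref{estenergie} then yields the desired $U\in C^0([0,T');H^s)^6$. The main obstacle is not conceptual but mechanical: carrying out the $6\times6$ symmetry check for $SA_1$ and then confirming that the positive-definiteness condition derived from the Schur complement matches exactly the product inequality appearing in~\eqref{assumptions}.
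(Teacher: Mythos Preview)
Your approach is correct and essentially identical to the paper's: the Hessian $E_d''(U)$ you compute is exactly the explicit symmetrizer $S(U)$ the paper writes down, and your Schur-complement reduction to the $2\times2$ block is the same computation the paper calls ``Gauss reduction,'' yielding precisely the conditions~\eqref{assumptions}. The only added value in your write-up is the conceptual motivation of $S$ as the energy Hessian, which the paper omits.
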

\begin{proof}
We introduce the following matrix $S$, namely
\[S(U):=\left(\begin{array}{cccccc}
 \gamma & \gamma & \gamma u_{1x} & \gamma u_{1y} & 0 & 0 \\
 \gamma &  1 & 0 & 0 & u_{2x} & u_{2y} \\
\gamma u_{1x} & 0 & \gamma h_1 & 0 & 0 & 0 \\
\gamma u_{1y} & 0 & 0 & \gamma h_1 & 0 & 0 \\
0 & u_{2x} & 0 & 0 & h_2 & 0 \\
0 & u_{2y} & 0 & 0 & 0 & h_2
\end{array}\right).\]

It is straightforward to check that $S(U)$ and $S(U)A(U,\xi)$ are self-adjoint, with $A(U,\xi):=\xi_1A_1(U)+\xi_2A_2(U)$. Then, using the Gauss reduction algorithm, one can check that $S(U)$ is definite positive if $U$ satisfies~\eqref{assumptions}. These requirements are satisfied at time $t=0$ by $U_0$, and we define $T$ as the maximum time such that they remain satisfied for all $t<T$. We know that $T>0$ thanks to a continuity argument. Then since we have proved that $S$ is a symmetrizer of~\eqref{simpeq}, Lemma~\ref{estenergie} gives $0<T'\leq T$ such that $U$ is uniquely defined on $[0,T')$.\qquad
\end{proof}

The last step consists in proving that the solutions of~\eqref{simpeq} approximate the solutions of the full system~\eqref{sys}, assuming that the latter exist. This is obtained thanks to the energy estimate of Lemma~\ref{estenergie}.
\begin{proposition} We fix $\gamma\in(0,1)$ and $\delta\in (0,+\infty)$. For $t_0>d/2$ and $s\geq t_0+1/2$, let $U\in C^1([0;T];H^s)^6\cap C^0([0;T];H^{s+1})^6$ be a solution of~\eqref{sys} such that~\eqref{h} is satisfied and $U$ is bounded in $\mathbf{H}^s([0,T])$, uniformly with respect to $\epsilon_1$, $\epsilon_2\in [0,1)$, and $\mu\in (0,\mu^{max}]$. We denote by $\t U:=(\t\zeta_1,\t\zeta_2,\t u_1,\t u_2)$ the solution of~\eqref{SWSW}, with the same initial values, that we assume to satisfy~\eqref{assumptions}. 
Then one has \[\big|U-\t U\big|_{{H}^s}\leq \mu C_0,\]
with $C_0=C(\frac{1}{h},\gamma,\delta,\mu^{max},\big|U\big|_{\mathbf{H}^s},T)$.
\end{proposition}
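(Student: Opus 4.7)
The plan is to combine the consistency of~\eqref{sys} with~\eqref{SWSW} (Proposition~\ref{consSW}) with a tame stability estimate for the Friedrichs-symmetrizable quasilinear system~\eqref{simpeq}. This is the classical consistency-plus-stability scheme.

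\textbf{Step 1: Both profiles solve~\eqref{simpeq} up to a controlled residual.} Rewriting $\tilde U$-quantities in the primitive variables used in~\eqref{SWSW}, $\tilde U$ is by assumption an exact solution of $\partial_t \tilde U + A_1(\tilde U)\partial_x\tilde U + A_2(\tilde U)\partial_y\tilde U = 0$. For the exact solution $U$ of~\eqref{sys}, Proposition~\ref{consSW} (together with the fact that the change of variables from $(\zeta_i,\psi_i)$ to $(h_i,u_i)$ is smooth and tame on the set~\eqref{h}) gives
\[
\partial_t U + A_1(U)\partial_x U + A_2(U)\partial_y U = R,\qquad \big\vert R\big\vert_{L^\infty([0,T];H^s)}\leq \mu\,C_0,
\]
where $C_0$ depends on the $\mathbf{H}^s$-bound of $U$.

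\textbf{Step 2: Equation for the difference.} Subtracting, the difference $V:=U-\tilde U$ satisfies $V_{|t=0}=0$ and
\[
\partial_t V + A_1(U)\partial_x V + A_2(U)\partial_y V = R + \bigl(A_1(\tilde U)-A_1(U)\bigr)\partial_x \tilde U + \bigl(A_2(\tilde U)-A_2(U)\bigr)\partial_y \tilde U.
\]
Since the matrices $A_j$ are affine in $U$, the commutator terms are bilinear: $(A_j(\tilde U)-A_j(U))\partial_{x_j}\tilde U$ is of the schematic form $V\cdot\nabla\tilde U$, so by Moser/tame product estimates in $H^s$ (for $s>d/2+1$),
\[
\big\vert \bigl(A_j(\tilde U)-A_j(U)\bigr)\partial_{x_j}\tilde U \big\vert_{H^s}\leq C\bigl(\big\vert\tilde U\big\vert_{W^{1,\infty}},\big\vert\tilde U\big\vert_{H^{s+1}}\bigr)\,\big\vert V\big\vert_{H^s}.
\]

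\textbf{Step 3: Energy estimate via the symmetrizer.} Apply $S(U)$ from the previous proposition. On the time interval where $U$ satisfies~\eqref{assumptions} (which holds at $t=0$ by hypothesis on the initial data, hence on a non-empty interval $[0,T^\star]$ by continuity of $U$, using~\eqref{h} and the regularity of $U$), $S(U)$ is positive definite and symmetrizes the system. A standard commutator argument on $\Lambda^s V$ and integration by parts yield
\[
\frac{d}{dt}\big\vert V\big\vert_{H^s}^2\;\leq\; K\bigl(\big\vert U\big\vert_{\mathbf{H}^s},\big\vert\tilde U\big\vert_{H^{s+1}}\bigr)\big\vert V\big\vert_{H^s}^2 \;+\; C\,\big\vert R\big\vert_{H^s}\,\big\vert V\big\vert_{H^s}.
\]
Since $V_{|t=0}=0$ and $\big\vert R\big\vert_{H^s}\leq \mu C_0$, Gr\"onwall's lemma gives $\big\vert V\big\vert_{H^s}\leq \mu\,C(\tfrac1h,\gamma,\delta,\mu^{\max},\big\vert U\big\vert_{\mathbf{H}^s},T)$ on $[0,T^\star]$.

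\textbf{Bootstrap and main obstacle.} The delicate point is that the symmetric-positive-definite condition~\eqref{assumptions} for $S(U)$ must remain valid throughout the full interval $[0,T]$ where $U$ is assumed to exist. A bootstrap argument closes this: by the estimate above, $\big\vert U-\tilde U\big\vert_{H^s}\leq \mu C_0$ as long as both~\eqref{h} (for $U$) and~\eqref{assumptions} (which $\tilde U$ initially satisfies) hold; since $\tilde U = U - V$ stays within $\mathcal{O}(\mu)$ of $U$, and~\eqref{assumptions} is an open condition in $W^{1,\infty}$, choosing $\mu$ small enough preserves it, allowing us to extend $T^\star$ up to $T$. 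Tracking the dependence of all constants on $\big\vert U\big\vert_{\mathbf{H}^s}$, $1/h$, $\gamma$, $\delta$, $\mu^{\max}$, $T$ gives the stated estimate. The main technical obstacle is this uniform control of the symmetrizer along the perturbed trajectory $\tilde U$; it is handled by the openness of~\eqref{assumptions} and the smallness of $\mu$.
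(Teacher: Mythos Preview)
Your proposal is correct and follows essentially the same consistency-plus-stability scheme as the paper. The paper invokes Lemma~\ref{estenergie} directly on the difference $R^\mu=U-\tilde U$, writing it as a solution of~\eqref{simpeqF} with coefficient matrices $A_j(R^\mu)$ (exploiting the affine dependence of $A_j$), whereas you linearize around $U$ and run the energy estimate with the symmetrizer $S(U)$ by hand; these are two presentations of the same argument. Your treatment of the continuation issue for~\eqref{assumptions} via an explicit bootstrap is more careful than the paper's ``taking a smaller $T$ if necessary'', but the content is the same.
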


{\it Proof.} 
Thanks to the consistency result (Proposition~\ref{consSW}), we know that $U$ satisfies~\eqref{simpeqF}, with $F(t,x,U)=f(t,x)$ and
\[\big\vert f\big\vert_{H^s}\leq \mu C_0,\] with $C_0=C(\frac{1}{h},\big|U\big|_{\mathbf{H}^s})$. 
Then, the difference between the two solutions $R^\mu:=U-\t U$ satisfies~\eqref{simpeqF}, with the same $f$ and
\[F(t,x,R^\mu):=f(t,x)-A_1(R^\mu)\partial_x \t U-A_2(R^\mu)\partial_y \t U.\]
Taking a smaller $T$ if necessary, one has \[\big\vert U\big\vert_{(W^{1,\infty}([0,T]\times \RR^d))^6}+\big\vert \t U\big\vert_{(W^{1,\infty}([0,T]\times \RR^d))^6}\leq M,\]
where $M$ is independent of $\epsilon_1$, $\epsilon_2$ and $\mu$.
Thus, we can apply Lemma~\ref{estenergie}, and one has 
\[\big\vert R^\mu(t)\big\vert_{H^s}\leq C\int_0^t e^{K(t-s)}\big\vert f \big\vert_{H^s} ds\leq \mu C(\frac{1}{h},\gamma,\delta,\mu^{max},\big|U\big|_{\mathbf{H}^s},T).\qquad\endproof\]

\section{Links to other models}
\label{Section3}
\subsection{Rigid lid in the shallow water/shallow water case}\label{rigidlid}
In~\cite{BLS}, Bona, Lannes and Saut presented a model for internal waves in the shallow water regime, with the rigid lid assumption. They showed that a nonlocal operator has to appear for $d=2$ (see observations in~\cite{GLS}). This operator cannot be seen in our model~\eqref{SWSW}, so that it is a purely two dimensional, rigid lid effect. However, we show in the following how to make it appear from~\eqref{SWSW}.

Indeed, the rigid lid assumption means that $\epsilon_1=0$, when $\epsilon_2$ remains $>0$, so that $\alpha=0$. The system~\eqref{SWSW} becomes
\begin{equation} \label{SWSWrl}\left\{ \begin{array}{l}
\nabla\cdot(h_1\nabla\psi_1)+\nabla\cdot(h_2\nabla\psi_2) =  0,  \\ 
\partial_t \zeta_2 +\nabla\cdot(h_2\nabla\psi_2)  =  0,  \\
\partial_t \nabla\psi_1 +\dfrac{\epsilon_2}{2}\nabla \left(|\nabla\psi_1|^2\right) =  0, \\ 
\partial_t \nabla\psi_2 + (1-\gamma)\nabla \zeta_2 +\dfrac{\epsilon_2}{2}\nabla\left( |\nabla\psi_2|^2 \right) =  0, \end{array} \right. \end{equation}
where $h_1=1-\epsilon_2\zeta_2$ and $h_2=\frac{1}{\delta}-\beta b+\epsilon_2\zeta_2$.

For simplicity, we restrict ourself to the case of a flat bottom ($\beta=0$), but we could do the same calculations with $\beta>0$.
We first define the shear velocity
\[v:=\nabla\psi_2-\gamma\nabla\psi_1.\]
From the first line, we deduce:
\[\nabla\cdot(h_2 v)=-\nabla\cdot((h_1+\gamma h_2)\nabla\psi_1)=-\frac{\gamma+\delta}{\delta}\nabla\cdot((1+\frac{\gamma-1}{\gamma+\delta}\delta\epsilon_2\zeta_2)\nabla\psi_1).\]
Then we define the nonlocal operator $\mathfrak Q$ as follows:
\begin{definition}
Assuming that $\zeta \in L^\infty (\RR^d)$, we define the mapping
\[ \mathfrak Q[\zeta]:= \begin{array}{ccc}
 L^2(\RR^d)^d & \rightarrow & L^2(\RR^d)^d \\
 W & \mapsto & V
\end{array}
\]
where $V$ is the unique gradient vector in $L^2(\RR^d)^d$, solution of the equation
\[\nabla\cdot((1+\zeta)V)=\nabla \cdot W.\]
\end{definition}
So from the definition, we have 
\[\nabla\psi_1=\mathfrak{Q}[\frac{\gamma-1}{\gamma+\delta}\delta\epsilon_2\zeta_2](-\frac{\delta}{\gamma+\delta}h_2v).\]
We plug this expression into~\eqref{SWSWrl}, and obtain immediately
\begin{equation} \label{SWSWRL}\left\{ \begin{array}{l} 
\partial_t \zeta_2 +\frac{\delta}{\gamma+\delta}\nabla\cdot\big(h_1 \mathfrak{Q}[\frac{\gamma-1}{\gamma+\delta}\delta\epsilon_2\zeta_2](h_2v) \big)=  0,  \\
\partial_t v + (1-\gamma)\nabla \zeta_2 +\dfrac{\epsilon_2}{2}\nabla\left( |v-\frac{\gamma\delta}{\gamma+\delta}\mathfrak{Q}[\frac{\gamma-1}{\gamma+\delta}\delta\epsilon_2\zeta_2](h_2v)|^2\right.\\ \multicolumn{1}{r}{ \left. -\frac{\gamma\delta^2}{(\gamma+\delta)^2}|\mathfrak{Q}[\frac{\gamma-1}{\gamma+\delta}\delta\epsilon_2\zeta_2](h_2v)|^2\right) =  0,} \end{array} \right. \end{equation}
where $h_1=1-\epsilon_2\zeta_2$ and $h_2=\frac{1}{\delta}+\epsilon_2\zeta_2$. This is exactly the system derived in~\cite{BLS}.

Using the same method, we could derive rigid-lid models from~\eqref{coefc} and~\eqref{HO}. The rigid-lid model in the Boussinesq regime has already been exhibited in~\cite{BLS}, and a fully nonlinear model is presented in~\cite{CCa}.

\subsection{The layer-mean equations}
\label{sectionlayermean}
In the literature, the water-wave system is often given by layer-mean equations (see for example~\cite{Wu}), using as unknowns the depth-mean velocity across the layers: 
\begin{eqnarray*}
 \overline{u}_1(X)&:=\frac{1}{h_1}\int_{\epsilon_2\zeta_2}^{1+\epsilon_1\zeta_1}\nabla\phi_1(X,r_1(X,z))dz &\mbox{ with } h_1:=1+\epsilon_1\zeta_1-\epsilon_2\zeta_2, \\
 \overline{u}_2(X)&:=\frac{1}{h_2}\int_{-1/\delta+\beta b}^{\epsilon_2\zeta_2}\nabla\phi_2(X,r_2(X,z))dz &\mbox{ with } h_2:=\frac{1}{\delta}-\beta b+\epsilon_2\zeta_2. 
\end{eqnarray*}
The systems under this form (obtained for example in~\cite{CCa2} and~\cite{BGT}) are equivalent to the system we derived, since one can approximate $\overline{u}_1$ and $\overline{u}_2$ thanks to our previous unknowns $\psi_1$ and $\psi_2$ (as we see in the following Proposition), and conversely. Thus, our study gives a rigorous justification of these models, and we are able to offer consistency results.
\begin{proposition}
 \label{uvsphi}
Let $t_0>d/2$ and $s\geq t_0+1/2$, $\nabla \psi_1$, $\nabla\psi_2 \in H^{s+11/2}(\RR^2)$, $\zeta_1 \in H^{s+7/2}(\RR^2)$, $\zeta_2 \in H^{s+9/2}(\RR^2)$ and $b\in H^{s+11/2}(\RR^2)$, such that~\eqref{h} is satisfied. Then one has
\begin{eqnarray}
\label{h1u10} \big\vert \overline{u}_1-\nabla\psi_1 \big\vert_{H^{s+1}}&\leq &\mu C_0,\\
\label{h2u20} \big\vert \overline{u}_2-\nabla\psi_2 \big\vert_{H^{s+1}}&\leq& \mu C_0,\\
\label{h1u11} \big\vert \overline{u}_1-\nabla\psi_1-\mu\mathcal D_1 (\nabla\psi_1,\nabla\psi_2) \big\vert_{H^{s+1}}&\leq& \mu^2 C_1,\\
\label{h2u21} \big\vert \overline{u}_2-\nabla\psi_2-\mu\mathcal D_2 \nabla\psi_2 \big\vert_{H^{s+1}}&\leq& \mu^2 C_1,
 \end{eqnarray}
with \[\begin{array}{r}C_j=C(\frac{1}{h},\beta \big\vert b\big\vert_{H^{s+7/2+2j}},\epsilon_2\big\vert\zeta_2\big\vert_{H^{s+5/2+2j}},\epsilon_1\big\vert\zeta_1\big\vert_{H^{s+3/2+2j}},\big\vert\nabla\psi_1\big\vert_{H^{s+7/2+2j}},\\\big\vert\nabla\psi_2\big\vert_{H^{s+7/2+2j}}),\end{array}\] and where $\mathcal D_1$ and $\mathcal D_2$ are defined by
\[\begin{array}{rl}
\mathcal D_1 (\nabla\psi_1,\nabla\psi_2)&= -\frac{1}{h_1}\Big(\mathcal T_1-\frac{1}{2}(h_1^2\nabla  \mathcal{A}_2)-(h_1\epsilon_1\nabla\zeta_1 \mathcal{A}_2)\Big),\\
\mathcal D_2 \nabla\psi_2&= -\frac{1}{h_2}\mathcal T_2,
\end{array}\]
with the notations of Proposition~\ref{G1}.
\end{proposition}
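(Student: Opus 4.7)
The plan is to mirror the strategy that gave Proposition~\ref{PH}: replace the true potentials $\phi_i$ by the explicit approximate solutions $\phi_i^{app,1}$ and $\phi_i^{app,2}$ constructed in~\S\ref{opexpansion}, carry out the depth averages by hand (they reduce to elementary integrals of powers of $z$), and then bound the remainder using the elliptic estimates of Appendix~\ref{PreuveG}. The case of $\overline{u}_2$ is handled first since it relies only on Proposition~\ref{defT}; the case of $\overline{u}_1$ is entirely analogous but uses both the water-wave-type approximation $\check\phi_1^{app,j}$ for $\check\phi_1$ and the hand-built $\bar\phi_1^{app}$ for $\bar\phi_1$ with layers coupled through $\mathcal A_2$.

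First I would perform the change of variables $w=r_i(X,z)$ in the definitions of $\overline u_i$. The Jacobian $\partial_z r_i=1/h_i$ exactly cancels the prefactor $1/h_i$, so that, up to harmless contributions from $\nabla_X r_i$ that only modify the higher-order remainder, one is left with $\overline u_1(X)=\int_0^1 \nabla_X\phi_1(X,w)\,dw$ (the flat-strip $\phi_1$) and likewise $\overline u_2(X)=\int_{-1}^0 \nabla_X\phi_2(X,w)\,dw$. Substituting $\phi_i^{app,1}$ gives the $O(\mu)$ identities~\eqref{h1u10}--\eqref{h2u20}; substituting $\phi_i^{app,2}$ gives the $O(\mu^2)$ identities~\eqref{h1u11}--\eqref{h2u21}. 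At this formal level the integrals of $(z-1),(z-1)^2,(z-1)^3$ on $[0,1]$ (resp.\ of $z,z^2,z^3$ on $[-1,0]$) combine with the explicit coefficients of $\phi_i^{app,j}$ to produce, after taking $\nabla_X$, precisely $\nabla\psi_i+\mu \mathcal D_i+\ldots$, where the structure of $\mathcal T[h,b]$ emerges from the same $-\tfrac13 h^3,\tfrac12 h^2\nabla b,h\nabla b\nabla b$ pattern already observed in~\S\ref{opexpansion}, divided by $h_i$ and with the additional $\tfrac12 h_1^2\nabla\mathcal A_2$ and $h_1\epsilon_1\nabla\zeta_1\mathcal A_2$ coming from $\bar\phi_1^{app}$.

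Next I would bound the remainders $R_i:=\overline u_i-\overline u_i^{app,j}$. Writing $u:=\phi_i-\phi_i^{app,j}$ and using Minkowski's integral inequality,
\[
\big\vert R_i\big\vert_{H^{s+1}(\RR^d)}\leq \int \big\vert\nabla u(\cdot,w)\big\vert_{H^{s+1}(\RR^d)}\,dw\leq C\,\big\|\nabla u\big\|_{L^2_z H^{s+1}_X(\SS^\pm)}.
\]
The $H^{s,k}$-type elliptic estimates obtained in Steps~4--5 of Appendix~\ref{PreuveG} for $u$, together with the identity $\sqrt\mu\,\nabla u=(\nabla^\mu_{X,z}u-\partial_z u\,e_{d+1})$, then yield a bound of the form $\mu^{j+1}C_j$ after dividing by $\sqrt\mu$, exactly as in the proof of Proposition~\ref{PH}.

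The main obstacle is Step~3: one must keep careful track of the $1/\sqrt\mu$ factor coming from the passage between the scaled gradient $\nabla^\mu_{X,z}$ used in the appendix and the unscaled horizontal gradient needed to define $\overline u_i$. In Proposition~\ref{PH} this cost an order in $\mu$ compared to the formal expansion (hence the remark after its statement), and here the same loss is to be expected; however, because averaging over $z$ is a cheaper operation than taking a trace, one still ends up at the announced orders $\mu C_0$ and $\mu^2 C_1$ provided one invests the additional regularity (the $H^{s+1}$ norm instead of $H^s$) that the statement allows. All other steps are routine manipulations of elementary polynomial integrals and tame product estimates in Sobolev spaces.
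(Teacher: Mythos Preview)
Your overall plan (flatten, substitute the approximate potentials, average in $z$, then bound the remainder with the elliptic estimates of Appendix~\ref{PreuveG}) is a legitimate route, but it is not the one the paper takes, and your version has a real gap.

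\medskip
\textbf{Comparison with the paper.} The paper does not recompute the depth averages of $\phi_i^{app,j}$ at all. Instead it first proves, via Green's formula applied to $\Delta^\mu_{X,z}\phi_i=0$ against a $z$-independent test function, the exact identities
\[
\nabla\cdot(h_1\overline u_1)=-\tfrac1\mu\big(G_1(\psi_1,\psi_2)-G_2\psi_2\big),\qquad
\nabla\cdot(h_2\overline u_2)=-\tfrac1\mu G_2\psi_2.
\]
These feed the already-proven expansions of $G_1$ and $G_2$ (Propositions~\ref{defT} and~\ref{G1}) directly into a bound on $\big|\nabla\cdot(\overline u_i-\nabla\psi_i)\big|_{H^s}$ and on $\big|\nabla\cdot(\overline u_i-\nabla\psi_i-\mu\mathcal D_i)\big|_{H^s}$, with no new algebra. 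An $L^2$ estimate is then obtained separately from the change of variables and Step~3 of the appendix. So the paper recycles the Dirichlet--Neumann expansions instead of redoing the computation you describe.

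\medskip
\textbf{The gap in your argument.} Your claim that after the change of variables ``up to harmless contributions from $\nabla_X r_i$ that only modify the higher-order remainder, one is left with $\overline u_1(X)=\int_0^1\nabla_X\phi_1(X,w)\,dw$'' is wrong at the precision you need for~\eqref{h1u11}--\eqref{h2u21}. The change of variables gives exactly (this is the formula the paper writes)
\[
\overline u_1-\nabla\psi_1=\int_0^1\Big(\nabla(\phi_1-\psi_1)+\nabla s_1\,\partial_z\phi_1\Big)\,dz,
\]
and the cross term $\nabla s_1\,\partial_z\phi_1$ is of size $O(\mu)$, not $O(\mu^2)$, because $\partial_z\phi_1=\mu\,\partial_z\phi^1+O(\mu^2)$ while $\nabla s_1$ is $O(1)$. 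In particular this term contributes to $\mathcal D_1$: for instance the piece $\epsilon_1\nabla\zeta_1\,\mathcal A_2$ in $\mathcal D_1$ does not come out of $\int_0^1\nabla_X\phi_1^{app,2}\,dw$ alone, it requires the $\nabla s_1\,\partial_z\phi^1$ contribution (and similarly for the $\mathcal T_1$ terms involving $\epsilon_2\nabla\zeta_2$). So if you insist on the direct-substitution approach you must keep that term throughout the computation of the $O(\mu)$ coefficient; dropping it would produce the wrong $\mathcal D_i$. This is fixable but makes the calculation substantially heavier than you indicate, which is precisely what the paper's Green-formula shortcut avoids.
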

\begin{proof}
Using the Green formula with $\phi_1$ the solution of~\eqref{phi1}, and a test function $\t \varphi:=(X,z)\mapsto \varphi(X)$, we have
\begin{eqnarray*} 
\int_{\Omega_1}\t \varphi\Delta^\mu_{X,z}\phi_1 dX dz&=&-\int_{\Omega_1}\nabla^\mu_{X,z} \phi_1\cdot\nabla^\mu_{X,z} \t \varphi dX dz+\int_{\Gamma_1}\varphi\partial_{n_1}\phi_1 dn_1+\int_{\Gamma_2}\varphi\partial_{n_2}\phi_1 dn_2\\
&=&-\mu\int_{\RR^d} \nabla\varphi \int_{\epsilon_2\zeta_2}^{1+\epsilon_1\zeta_1}\nabla\phi_1dz dX +\int_{\RR^d}\varphi\left( G_1(\psi_1,\psi_2)-G_2\psi_2\right)  dX .
\end{eqnarray*}
Thus, we deduce 
\begin{equation}
\label{u1G1} \nabla\cdot(h_1\overline{u}_1)=\frac{-1}{\mu}(G_1(\psi_1,\psi_2)-G_2\psi_2).
\end{equation}
Identically, we have
\begin{equation}
\label{u2G2} \nabla\cdot(h_2\overline{u}_2)=\frac{-1}{\mu}G_2\psi_2.
\end{equation}
We now prove the estimate~\eqref{h1u10}, and the others are obtained in the same way.

Using the Propositions~\ref{defT} and~\ref{G1} together with~\eqref{u1G1}, and since~\eqref{h} is satisfied, one has immediately
\[\big\vert \nabla\cdot(\overline{u}_1-\nabla\psi_1) \big\vert_{H^{s}}\leq \mu C_0,\]
so that we only have to obtain an $L^2$-estimate on $\overline{u}_1-\nabla\psi_1$. Using the definition of $\overline{u}_1$ and the mappings defined on \S~\ref{asymptexp}, we obtain
\[\overline{u}_1-\nabla\psi_1=\int_0^1 \nabla(\t\phi_1-\psi_1)+\nabla s_1\partial_{\t z} \t\phi_1 d\t z,\]
with $\phi_1:(X,\t z)\in \SS^+\mapsto \t \phi_1(X,s_1(X,\t z))$. We deduce \[\big\vert \overline{u}_1-\nabla\psi_1 \big\vert_2\leq C(\epsilon_1\big\vert \zeta_1\big\vert _{W^{1,\infty}},\epsilon_2\big\vert \zeta_2\big\vert _{W^{1,\infty}})\big\| \nabla_{X,\t z}(\t\phi_1-\psi_1)  \big\|_2 .\] The estimate follows now from Step 3 of \S~\ref{Hsestimates}, together with the estimates~\eqref{esth} and~\eqref{estV}.
\end{proof}

\subsubsection{The shallow water/shallow water regime: $\mu\ll 1$, $\epsilon=O(1)$}
We use~\eqref{h1u10} and~\eqref{h2u20} in the system~\eqref{SWSW}, and with a straightforward linear combination, we obtain
\begin{equation} \label{SWSWLM}\left\{ \begin{array}{l}
\partial_t h_1 + \epsilon_2\nabla\cdot(h_1\overline{u}_1) =  0,  \\ 
\partial_t h_2 +\epsilon_2\nabla\cdot(h_2\overline{u}_2)  =  0,  \\
\partial_t \overline{u}_1 + \nabla h_2+\beta\nabla b +\nabla h_1 +\dfrac{\epsilon_2}{2}\nabla \left(|\overline{u}_1|^2\right) =  0, \\ 
\partial_t \overline{u}_2 + \nabla h_2+\beta\nabla b + \gamma \nabla h_1 +\dfrac{\epsilon_2}{2}\nabla\left( |\overline{u}_2|^2 \right) =  0. \end{array} \right. \end{equation}
\begin{proposition}
The full system~\eqref{sys} is consistent with~\eqref{SWSWLM}, at the precision $\mu C_0$, with \[\begin{array}{r}C_0=C(\frac{1}{h},\beta \big\vert b\big\vert_{W^{1,\infty}H^{s+7/2}},\epsilon_2\big\vert\zeta_2\big\vert_{W^{1,\infty}H^{s+5/2}},\epsilon_1\big\vert\zeta_1\big\vert_{W^{1,\infty}H^{s+3/2}},\big\vert\nabla\psi_1\big\vert_{W^{1,\infty}H^{s+7/2}},\\\big\vert\nabla\psi_2\big\vert_{W^{1,\infty}H^{s+7/2}}).\end{array}\]
\end{proposition}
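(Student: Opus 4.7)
The plan is to leverage the already-established Proposition~\ref{consSW} (consistency of the full system with~\eqref{SWSW} at precision $\mu C_0$) together with the identification $\overline{u}_i\approx\nabla\psi_i$ furnished by Proposition~\ref{uvsphi}. The whole proof then reduces to showing that, when evaluated on any sufficiently smooth solution of~\eqref{sys} satisfying~\eqref{h}, each equation of~\eqref{SWSWLM} has an $L^\infty H^s$ residual of size at most $\mu C_0$.

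First, the two continuity equations of~\eqref{SWSWLM} actually hold with zero residual. The Green-formula identities
\[\nabla\cdot(h_1\overline{u}_1)=-\tfrac{1}{\mu}\big(G_1(\psi_1,\psi_2)-G_2\psi_2\big),\qquad \nabla\cdot(h_2\overline{u}_2)=-\tfrac{1}{\mu}G_2\psi_2,\]
established inside the proof of Proposition~\ref{uvsphi}, combined with $\alpha\partial_t\zeta_1=\tfrac{1}{\mu}G_1(\psi_1,\psi_2)$ and $\partial_t\zeta_2=\tfrac{1}{\mu}G_2\psi_2$ from~\eqref{sys}, together with the elementary identities $\partial_t h_1=\epsilon_2\alpha\,\partial_t\zeta_1-\epsilon_2\,\partial_t\zeta_2$ and $\partial_t h_2=\epsilon_2\,\partial_t\zeta_2$, give $\partial_t h_i+\epsilon_2\,\nabla\cdot(h_i\overline{u}_i)=0$ exactly.

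For the two momentum equations I would start from the corresponding equations of~\eqref{SWSW}, which by Proposition~\ref{consSW} hold modulo $\mu C_0$ on the solution of~\eqref{sys}. Using the algebraic identities $\nabla h_1+\nabla h_2+\beta\nabla b=\epsilon_1\nabla\zeta_1$ and $\nabla h_2+\beta\nabla b+\gamma\nabla h_1=(1-\gamma)\epsilon_2\nabla\zeta_2+\gamma\epsilon_1\nabla\zeta_1$, I rewrite the pressure-like terms in the form appearing in~\eqref{SWSWLM}. Replacing $\nabla\psi_i$ by $\overline{u}_i$ inside the quadratic nonlinearities $\tfrac{\epsilon_2}{2}\nabla|\nabla\psi_i|^2$ costs $\mu C_0$ thanks to~\eqref{h1u10}--\eqref{h2u20} and standard tame product estimates in $H^s$.

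The delicate ingredient is controlling $\partial_t(\overline{u}_i-\nabla\psi_i)$ in $H^s$, since naively differentiating the spatial estimates~\eqref{h1u10}--\eqref{h2u20} in $t$ would cost one extra derivative. This should be handled exactly as in the final part of the proof of Proposition~\ref{consSW} for the analogous term $\partial_t(H(\psi_1,\psi_2)-\nabla\psi_1)$: differentiate in time the elliptic boundary-value problem solved by $\phi_i-\phi_i^{app,1}$, apply the elliptic estimates of Appendix~\ref{PreuveG} to bound $\nabla^\mu_{X,z}\partial_t(\phi_i-\phi_i^{app,1})$ in the anisotropic space $H^{s+1/2,1}(\SS^\pm)$, and then pass to the interface via M\'etivier's trace theorem, using the very Green-formula representation of $\overline{u}_i$ recalled above. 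This time-derivative estimate is the main technical obstacle, but the paper's appendix already supplies all the needed tools; once it is in hand, matching the two sides of~\eqref{SWSWLM} is purely algebraic bookkeeping.
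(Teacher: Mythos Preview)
Your approach is essentially the same as the paper's: invoke Proposition~\ref{consSW} for consistency with~\eqref{SWSW}, then transfer to~\eqref{SWSWLM} via the estimates~\eqref{h1u10}--\eqref{h2u20} of Proposition~\ref{uvsphi}. The paper's proof is two sentences long and does exactly this.

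Your version is in fact more careful than the paper's. You correctly observe that the first two equations of~\eqref{SWSWLM} hold \emph{exactly} thanks to the identities~\eqref{u1G1}--\eqref{u2G2}; the paper only mentions this in the Remark following the proposition, not in the proof itself. More importantly, you flag the control of $\partial_t(\overline{u}_i-\nabla\psi_i)$ in $H^s$ as the one nontrivial point and explain how to obtain it by differentiating the elliptic problem in time, paralleling the treatment of $\partial_t(H(\psi_1,\psi_2)-\nabla\psi_1)$ in the proof of Proposition~\ref{consSW}. The paper's proof simply writes ``From~\eqref{h1u10} and~\eqref{h2u20}, we deduce\dots'' and leaves this step implicit. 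So there is no genuine difference in route, only in the level of detail; your expansion of the time-derivative step is a legitimate and welcome clarification.
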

\begin{proof}
We know from Proposition~\ref{consSW} that~\eqref{sys} is consistent with~\eqref{SWSW}, at the precision $\mu C_0$. From~\eqref{h1u10} and~\eqref{h2u20}, we deduce that $(\zeta_1,\zeta_2,\overline{u}_1,\overline{u}_2)$ satisfies~\eqref{SWSWLM} up to a residual of the same order.
\end{proof}
\begin{Rem}
 Note that the first two equations of~\eqref{SWSWLM} are equalities (where the last two equations are first order approximations in $\mu$), as we can see from~\eqref{sys},~\eqref{u1G1} and~\eqref{u2G2}. They reveal the conservation of mass. Conservation of momentum and energy are the one obtained in \S~\ref{secSWSW}, when we substitute $\overline{u}_i$ for $\nabla\psi_i$ $(i=1,2)$. These conservation laws, and the one of higher order systems, have already been introduced in the flat-bottom case in~\cite{BGT}.
\end{Rem}

\subsubsection{The Boussinesq/Boussinesq regime: $\mu \sim \epsilon_2 \sim \epsilon_1 \ll 1$}
We now restrict ourself to the flat-bottom case, since it considerably simplifies the notations, but the following could be derived with $\beta\neq 0$ without any difficulty. The estimates~\eqref{h1u11} and~\eqref{h2u21} with $\epsilon_2 \sim \mu$ and $\beta=0$ give the following formal relations
\begin{eqnarray}
 \overline{u}_1&\approx&\nabla\psi_1+\mu(\frac{1}{3}\nabla\Delta\psi_1+\frac{1}{2\delta}\nabla\Delta\psi_2),\\
 \overline{u}_2&\approx&\nabla\psi_2+\mu\frac{1}{3\delta^2}\nabla\Delta\psi_2.
\end{eqnarray}
Plugging this into~\eqref{BoussBouss} we obtain the system
\begin{equation}
\label{BoussBoussLM}
\left\{ \begin{array}{l}
\partial_t h_1 + \epsilon_2\nabla\cdot(h_1\overline{u}_1) =  0,  \\ 
\partial_t h_2 +\epsilon_2\nabla\cdot(h_2\overline{u}_2)  =  0, \\
\partial_t \overline{u}_1 +\alpha \nabla \zeta_1 +\dfrac{\epsilon_2}{2}\nabla \left(|\overline{u}_1|^2\right)  = \mu\partial_t(\frac{1}{3}\Delta\overline{u}_1+\frac{1}{2\delta}\Delta\overline{u}_2), \\ 
\partial_t \overline{u}_2 + (1-\gamma)\nabla \zeta_2 + \alpha\gamma \nabla \zeta_1 +\dfrac{\epsilon_2}{2}\nabla\left( |\overline{u}_2|^2 \right) = \mu\partial_t\big((\frac{1}{3\delta^2}+\frac{\gamma}{\delta}) \Delta\overline{u}_2+\frac{\gamma}{2}\Delta\overline{u}_1 \big).
\end{array} \right.  \end{equation}
\begin{Rem}
This set of equations had been revealed in~\cite{CCa2}. It corresponds to~\eqref{coefc}, with the choice of parameters: $a_1=-\frac{1}{2}$, $a_2=-\frac{1}{3}$, $b_1=-\frac{1}{3}$. This particular choice of parameters leads to a linearly ill-posed system. That is why it is interesting to obtain, as in \S~\ref{sectbouss}, a larger class of models, allowing linearly well-posed systems.
\end{Rem}
Since this system is a particular case of the Boussinesq/Boussinesq model~\eqref{coefc}, we can apply the Proposition~\ref{consbousscoef}.
\begin{proposition}
 The full system~\eqref{sys} is consistent with~\eqref{BoussBoussLM}, at the precision $\mu^2 C_1$, with \[\begin{array}{r}C_1=C(\frac{1}{h},\beta \big\vert b\big\vert_{W^{1,\infty}H^{s+11/2}},\epsilon_2\big\vert\zeta_2\big\vert_{W^{1,\infty}H^{s+9/2}},\epsilon_1\big\vert\zeta_1\big\vert_{W^{1,\infty}H^{s+7/2}},\big\vert\nabla\psi_1\big\vert_{W^{1,\infty}H^{s+11/2}},\\ \big\vert\nabla\psi_2\big\vert_{W^{1,\infty}H^{s+11/2}}).\end{array}\]
\end{proposition}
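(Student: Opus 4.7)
The plan is to reduce this proposition to Proposition~\ref{consbousscoef} by verifying that the layer-mean system \eqref{BoussBoussLM} is exactly the specialization of the parameterized family \eqref{coefc} to $a_1=-1/2$, $a_2=-1/3$, $b_1=-1/3$, $\beta=0$, once the auxiliary variables $u_i$ there are identified with the layer-mean velocities $\overline{u}_i$. This identification is the content of Proposition~\ref{uvsphi}: the estimates \eqref{h1u11} and \eqref{h2u21}, specialized to the Boussinesq regime $\epsilon_i\sim\mu\ll 1$ with flat bottom, give
\[
\overline{u}_1 = \nabla\psi_1 + \mu\bigl(\tfrac{1}{3}\nabla\Delta\psi_1 + \tfrac{1}{2\delta}\nabla\Delta\psi_2\bigr) + O(\mu^2), \qquad \overline{u}_2 = \nabla\psi_2 + \tfrac{\mu}{3\delta^2}\nabla\Delta\psi_2 + O(\mu^2),
\]
which match precisely the formulas for $u_1$ and $u_2$ used in the construction preceding \eqref{coefc} for that choice of parameters.

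Concretely, let $U=(\zeta_1,\zeta_2,\nabla\psi_1,\nabla\psi_2)\in\mathbf{H}^{s+2}$ solve \eqref{sys} with \eqref{h} satisfied. I would first observe that the two mass equations of \eqref{BoussBoussLM} are in fact exact identities: combining the first two equations of \eqref{sys} with the formulas \eqref{u1G1} and \eqref{u2G2} derived inside the proof of Proposition~\ref{uvsphi} yields $\partial_t h_i + \epsilon_2\nabla\cdot(h_i\overline{u}_i)=0$ with zero residual. For the two momentum equations, I would substitute the expansions above into the momentum equations of \eqref{BoussBouss} and invoke the consistency of \eqref{sys} with \eqref{BoussBouss} at precision $\mu^2 C_1$, which is the content of Proposition~\ref{consbousscoef} for $a_1=a_2=b_1=0$. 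The extra residuals generated by replacing $\nabla\psi_i$ with $\overline{u}_i$ on the right-hand side of \eqref{BoussBouss} consist of $O(\mu^2)$ corrections (from \eqref{h1u11}, \eqref{h2u21}) multiplied by coefficients of size $O(\mu)$, since the dispersive terms carry a prefactor $\mu$ and $\epsilon_2\sim\mu$; these contributions are therefore $O(\mu^3)$, well absorbed into the stated precision.

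The main obstacle I anticipate is the handling of the time derivatives $\mu\partial_t(\tfrac{1}{3}\Delta\overline{u}_1+\tfrac{1}{2\delta}\Delta\overline{u}_2)$ and $\mu\partial_t\bigl((\tfrac{1}{3\delta^2}+\tfrac{\gamma}{\delta})\Delta\overline{u}_2+\tfrac{\gamma}{2}\Delta\overline{u}_1\bigr)$ appearing on the right-hand side of \eqref{BoussBoussLM}, since the substitution $\overline{u}_i=\nabla\psi_i+\mu(\cdots)+O(\mu^2)$ must be differentiated in $t$. This requires an estimate on $\partial_t(\overline{u}_i-\nabla\psi_i)$ in $H^{s+1}$, obtained exactly as in the proof of Proposition~\ref{consSW}: differentiate in time the elliptic problems \eqref{tphi1}--\eqref{tphi2}, apply the energy estimates of Appendix~\ref{PreuveG} to the time-derived system, and use the fact that $U\in\mathbf{H}^{s+2}$ provides the extra spatial regularity that compensates for the time differentiation. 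Once these time-derivative bounds are in hand, the consistency at precision $\mu^2 C_1$ follows by a direct bookkeeping argument invoking Proposition~\ref{consbousscoef} as a black box.
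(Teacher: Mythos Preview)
Your proposal is correct and follows the same approach as the paper: the paper's proof consists solely of the remark immediately preceding the proposition, namely that \eqref{BoussBoussLM} is the specialization of \eqref{coefc} to $a_1=-\tfrac12$, $a_2=-\tfrac13$, $b_1=-\tfrac13$, so Proposition~\ref{consbousscoef} applies directly. Your additional care in distinguishing the exact $u_i$ of \eqref{coefc} from the layer-mean $\overline{u}_i$ via Proposition~\ref{uvsphi}, and in handling the time-differentiated remainders, is legitimate but more detailed than what the paper spells out; the paper simply absorbs the $O(\mu^2)$ gap $\overline{u}_i-u_i$ into the stated precision without further comment.
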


\subsubsection{The higher order system}
We now do the same study, without assuming any smallness on $\epsilon_1$, $\epsilon_2$. We plug~\eqref{h1u11} and~\eqref{h2u21} into~\eqref{HO}, and obtain
\begin{equation}\label{HOLM} \left\{ \begin{array}{lr}
\multicolumn{2}{l}{\partial_t h_1 + \epsilon_2\nabla\cdot(h_1\overline{u}_1) =  0,  }\\ 
\multicolumn{2}{l}{\partial_t h_2 +\epsilon_2\nabla\cdot(h_2\overline{u}_2)  =  0, }\\
\multicolumn{2}{l}{\partial_t \overline{u}_1 + \alpha \nabla \zeta_1 +\dfrac{\epsilon_2}{2}\nabla \left(|\overline{u}_1|^2\right)  =  \mu \epsilon_2\nabla\NN_1+\mu\epsilon_2\nabla(\overline{u}_1\cdot \mathcal D_1)+\mu\partial_t \mathcal D_1,} \\ 
\multicolumn{2}{l}{\partial_t \overline{u}_2 + (1-\gamma)\nabla \zeta_2 + \gamma \alpha\nabla \zeta_1 +\dfrac{\epsilon_2}{2}\nabla\left( |\overline{u}_2|^2 \right) = \mu\Big(\partial_t(\gamma\nabla \mathcal{H}+ \mathcal D_2)+\epsilon_2\nabla(\gamma\overline{u}_1\cdot \nabla\mathcal{H}}\\  & +\overline{u}_2\cdot \mathcal D_2+\NN_2+\gamma\NN_1) \Big),
\end{array} \right. \end{equation}
with the notations of Proposition~\ref{uvsphi} and~\eqref{HO}, and when we substitute $\overline{u}_i$ for $\nabla\psi_i$ ($i=1,2$).
\begin{proposition}
The full system~\eqref{sys} is consistent with~\eqref{HOLM}, at the precision $\mu^2 C_1$, with \[\begin{array}{r}C_1=C(\frac{1}{h},\beta \big\vert b\big\vert_{W^{1,\infty}H^{s+11/2}},\epsilon_2\big\vert\zeta_2\big\vert_{W^{1,\infty}H^{s+9/2}},\epsilon_1\big\vert\zeta_1\big\vert_{W^{1,\infty}H^{s+7/2}},\big\vert\nabla\psi_1\big\vert_{W^{1,\infty}H^{s+11/2}},\\ \big\vert\nabla\psi_2\big\vert_{W^{1,\infty}H^{s+11/2}}).\end{array}\]
\end{proposition}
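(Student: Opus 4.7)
The plan is to derive the consistency of \eqref{sys} with \eqref{HOLM} by combining two ingredients already at our disposal: the consistency of \eqref{sys} with the higher order system \eqref{HO} (Proposition~\ref{consHO}), and the asymptotic relation between the trace velocities $\nabla\psi_i$ and the layer-mean velocities $\overline{u}_i$ provided by Proposition~\ref{uvsphi}. The observation is that \eqref{HOLM} is, by construction, precisely what one obtains from \eqref{HO} when the change of unknowns $\overline{u}_i=\nabla\psi_i+\mu\mathcal{D}_i+O(\mu^2)$ is plugged in; so any discrepancy should be absorbable into a $\mu^2 C_1$ residual.

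First I would fix $t_0>d/2$, $s\geq t_0+1/2$, and take a solution $U=(\zeta_1,\zeta_2,\nabla\psi_1,\nabla\psi_2)$ of \eqref{sys} satisfying \eqref{h} and lying in $\mathbf{H}^{s+2}$, so that all the constants $C_1$ of the previous propositions are finite. From \eqref{u1G1}--\eqref{u2G2}, the first two equations of \eqref{HOLM} are exact (they are simply \eqref{sys}$_{1,2}$ rewritten using the Green identity), so no residual is produced there. For the evolution equations on $\overline{u}_i$, I would substitute $\overline{u}_i=\nabla\psi_i+\mu \mathcal{D}_i+r_i$ with $r_i=O(\mu^2 C_1)$ in $H^{s+1}$ via \eqref{h1u11}--\eqref{h2u21}. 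All the quadratic and $\NN_j$ terms involving $\overline{u}_i$ differ from their $\nabla\psi_i$ counterparts by quantities of size $\mu C_0\cdot|\nabla\psi_i|+\mu^2 C_1$, and when multiplied by the explicit prefactor $\mu$ (resp.\ $\epsilon_2\sim 1$), tame algebra in $H^s$ (with $s>d/2$) gives a $\mu^2 C_1$ bound.

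The next step is to reorganize the $\partial_t$ terms: writing $\partial_t\overline{u}_i=\partial_t\nabla\psi_i+\mu\partial_t\mathcal{D}_i+\partial_t r_i$, the $\mu\partial_t\mathcal{D}_i$ pieces are exactly those appearing on the right-hand side of \eqref{HOLM} (together with the $\gamma\partial_t\nabla\mathcal{H}$ term inherited from \eqref{HO}), and the $\partial_t r_i$ are the only genuinely delicate contributions. To handle them I would differentiate the boundary value problems for $\phi_1-\phi_1^{app,2}$ and $\phi_2-\phi_2^{app,2}$ in $t$, exactly as in the proof of Proposition~\ref{consSW} (system \eqref{eqdtu}), obtain the analogous Step 4/Step 5 elliptic estimates of Appendix~\ref{PreuveG} applied to $\partial_t u$, and then mimic the proof of Proposition~\ref{uvsphi} on these time-differentiated quantities. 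This yields $|\partial_t r_i|_{H^s}\leq \mu^2 C_1$ with the $W^{1,\infty}H^{\cdot}$ norms in the statement.

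With those bounds in hand, plugging $U$ (rewritten in terms of $\overline{u}_i$) into \eqref{HOLM} produces a residual that decomposes into: a piece coming from \eqref{sys}-versus-\eqref{HO}, bounded by $\mu^2 C_1$ via Proposition~\ref{consHO}; a piece coming from substituting $\nabla\psi_i+\mu\mathcal{D}_i$ for $\overline{u}_i$ inside the $O(1)$ and $O(\mu)$ terms of \eqref{HO}, which is $O(\mu^2 C_1)$ by direct expansion (tame estimates plus \eqref{h1u11}--\eqref{h2u21}); and the $\partial_t r_i$ piece treated above. Summing these concludes the proof. The main obstacle is the time-derivative estimate $|\partial_t r_i|_{H^s}\leq\mu^2 C_1$, because it is the only step that is not a direct algebraic manipulation; however, it is structurally identical to the end of the proof of Proposition~\ref{consSW}, so it reduces to a careful bookkeeping of indices rather than a new analytical difficulty.
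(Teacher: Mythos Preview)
Your proposal is correct and follows essentially the same approach as the paper: invoke Proposition~\ref{consHO} for consistency with \eqref{HO}, then use the estimates \eqref{h1u11}--\eqref{h2u21} of Proposition~\ref{uvsphi} to pass from $\nabla\psi_i$ to $\overline{u}_i$. Your treatment is in fact more explicit than the paper's two-line proof, in particular you spell out the handling of $\partial_t r_i$ via differentiation of the boundary value problem (as in the proof of Proposition~\ref{consSW}), which the paper leaves implicit in the $W^{1,\infty}H^{\cdot}$ dependence of $C_1$.
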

\begin{proof}
 Let $t_0>d/2$ and $s\geq t_0+1/2$. Let $(\zeta_1,\zeta_2,\nabla\psi_1,\nabla\psi_2)$ be a sufficiently smooth solution of~\eqref{sys}, such that~\eqref{h} is satisfied.
We know from Proposition~\ref{consHO} that $(\zeta_1,\zeta_2,\nabla\psi_1,\nabla\psi_2)$ satisfies~\eqref{HO} up to a residual bounded by $\mu^2 C_1$. Then, the estimates~\eqref{h1u11} and~\eqref{h2u21} give that $(\zeta_1,\zeta_2,\overline{u}_1,\overline{u}_2)$ satisfies~\eqref{HOLM} up to a residual of the same order.
\end{proof}

\appendix
\section{Proof of Proposition~\ref{G1}}
\label{PreuveG}

Our proof contains three parts. First we introduce $u$ the correction to the expansion of $\phi_1$ formally obtained in \S~\ref{opexpansion}, and we present the system solved by $u$. Then, we use the elliptic form of the operator to obtain $H^s$ estimates on $u$. Finally, we use these estimates to prove the desired inequalities.

\subsection{System solved by $u$}
We first define the second order correction to the formal expansion:
\[u:=\phi_1-\psi_1+\mu \underbrace{h_1(z-1)\Big(h_1\big(\frac{z+1}{2}\big)\Delta\psi_1-\epsilon_2\nabla\zeta_2\cdot\nabla\psi_1+\nabla\cdot(h_2\nabla\psi_2)\Big)}_{:=\phi^1}.\]
From the computation carried out in \S~\ref{opexpansion}, we know that $u$ satisfies the following equalities:
\begin{eqnarray*}
 \nabla_{X,z}\cdot P_1 \nabla_{X,z}u&=\mu^2\nabla\cdot P^1\nabla \phi^1 &\mbox{ in } \SS^+,\\
 u &=0 &\mbox{ on } \{z=1\},\\
 \partial_n u &= G_2\psi_2 +\mu\nabla\cdot (h_2\nabla\psi_2)+\mu^2(\partial_n^{P^1}\phi^1)& \mbox{ on } \{z=0\}.
\end{eqnarray*}
Moreover, we notice that~\eqref{u2G2} gives $G_2\psi_2 +\mu\nabla\cdot (h_2\nabla\psi_2)=\nabla\cdot V$, with \[V=\mu h_2(\nabla\psi_2-\overline{u}_2).\] Thus, using the definition of $P_1$ in~\eqref{Pi}, we finally have the system
\begin{equation}\label{equ}\left\{\begin{array}{ll}
 \nabla^\mu_{X,z}\cdot P^\mu \nabla^\mu_{X,z}u =\mu^2\ \nabla_{X,z}^\mu\cdot \hh & \mbox{in } \SS^+, \\
u =0 & \mbox{on } \{z=1\}, \\
 \partial_n u =\nabla\cdot V +\mu^2 e_{d+1}\cdot \hh & \mbox{on } \{z=0\},
\end{array}
\right.\end{equation}
where we have introduced the notation $\nabla^\mu_{X,z}:=(\sqrt\mu\nabla^T,\partial_z)^T$, $\hh:=P^\mu \nabla^\mu_{X,z}\phi^1$ and
 \[P^\mu:=\left(\begin{array}{cc}  h_1 I_d & -\sqrt\mu\nabla s_1 \\ -\sqrt\mu{\nabla s_1}^T & \frac{1+\mu|\nabla s_1|^2}{h_1} \end{array}\right).\]

We now give the useful estimates of the right-hand side of the system. It is straightforward to check that 
\begin{equation}\label{esth}\big\|\hh\big\|_{H^{s+1/2,1}}\leq C(\frac{1}{h},\epsilon_1\big\vert \zeta_1\big\vert _{H^{s+3/2}},\epsilon_2\big\vert \zeta_2\big\vert _{H^{s+5/2}},\beta\big\vert b\big\vert _{H^{s+5/2}},\big\vert \nabla\psi_1\big\vert _{H^{s+7/2}},\big\vert \nabla\psi_2\big\vert _{H^{s+7/2}}).\end{equation}
Then, since one has $G_2\psi_2 +\mu\nabla\cdot (h_2\nabla\psi_2)=\nabla\cdot V$, Proposition~\ref{defT} immediately gives
\begin{equation}\label{estdV}\big\vert \nabla\cdot V\big\vert_{H^s}\leq \mu^2 C(\frac{1}{h},\beta\big\vert b\big\vert _{H^{s+7/2}},\epsilon_2\big\vert \zeta_2\big\vert _{H^{s+5/2}},\big\vert \nabla\psi_2\big\vert _{H^{s+7/2}}).\end{equation}
We now seek a $L^2$-estimate of $V=\mu h_2(\nabla\psi_2-\overline{u}_2)$. Using the definition of $\overline{u}_2$ and the mappings defined on \S~\ref{asymptexp}, we obtain easily that 
\[\overline{u}_2-\nabla\psi_2=\int_{-1}^0 \nabla(\t\phi_2-\psi_2)+\nabla s_2\partial_{\t z} \t\phi_2 d\t z,\]
with $\phi_2:(X,\t z)\in \SS^-\mapsto\t \phi_2(X,s_2(X,\t z))$. Then, the method of our proof adapted for the lower fluid (this is done for example in~\cite{Florent}) leads at Step 3 to a $L^2$-estimate on $\nabla_{X,z}^\mu (\t\phi_2-\psi_2)$. We then plug this estimate on the previous equality, deduce the desired estimate on $\big\vert V\big\vert_{L^2}$, and finally get with~\eqref{estdV}: 
\begin{equation}\label{estV}\big\vert V\big\vert_{H^s}\leq \mu^2 C(\frac{1}{h},\beta\big\vert b\big\vert _{H^{s+5/2}},\epsilon_2\big\vert \zeta_2\big\vert _{H^{s+3/2}},\big\vert \nabla\psi_2\big\vert _{H^{s+5/2}}).\end{equation}

\subsection{$H^{s,1}$-estimate ($s\geq 0$) on $u$}\label{Hsestimates} 
We follow the sketch of the proof of Proposition 3 in~\cite{BLS}, which contains five steps.

\paragraph{Step 1} \emph{Coercivity of the operator.}
Since $\zeta_1$, $\zeta_2 \in W^{1,\infty}$ and satisfy~\eqref{h}, we can check (see Proposition 2.3 of~\cite{Lannes2}) that for any $\Theta \in \RR^{d+1}$,
\[\Theta\cdot P^\mu\Theta \geq \frac{1}{k} \big\vert \Theta\big\vert ^2,\]
with $k=\big\| h_1\big\|_\infty+\frac{1}{h}(1+\mu\big\|\nabla s_1\big\|_\infty^2$). The operator is uniformly coercive in $\mu$.

\paragraph{Step 2} \emph{Existence and uniqueness of the solution.}
The result is given by the coercivity of the operator. From the assumptions on $\zeta_1$, $\zeta_2$, $b$, $\psi_1$ and $\psi_2$, we know that $\hh \in H^{s+1/2,1}(\SS^+)^{d+1}$ and $V\in H^{s+1}(\RR^d)$. For $s\geq 1/2$, the proof of Proposition~\ref{flattening} works for the system~\eqref{equ}, so that we know that there exists a unique solution in $H^2(\SS^+)$. We now prove by induction that for $k\in\mathbb N$, 
\begin{equation}\label{smoothing}\hh\in H^{k+1} \mbox{ and } V\in H^k \Longrightarrow u\in H^{k+2}.\end{equation}
We assume that $\hh\in H^{k+2}$ and $V\in H^{k+1}$. We thus know that $u\in H^{k+2}$, so that $v:=\Lambda u \in H^{k+1}\subset H^1$. Hence, $v$ is the classical solution of 
\begin{equation}\left\{\begin{array}{ll}
 \nabla^\mu_{X,z}\cdot P^\mu \nabla^\mu_{X,z}v =\mu^2\ \nabla_{X,z}^\mu\cdot \t\hh & \mbox{in } \SS^+, \\
v =0 & \mbox{on } \{z=1\}, \\
 \partial_n v =\nabla\cdot \Lambda V +\mu^2 e_{d+1}\cdot \partial_x \t\hh & \mbox{on } \{z=0\},
\end{array}
\right.\end{equation}
with $\mu^2\t \hh =\mu^2\Lambda \hh +[\Lambda,P^\mu]\nabla^\mu_{X,z} u$. Thanks to Theorem 6 of~\cite{Lannes3}: for $t_0>\frac{d}{2}$, one has
\[\big\| [\Lambda,P^\mu]\nabla^\mu_{X,z} u\big\|_2 \leq C_{t_0} \big\| \nabla P^\mu\big\|_{H^{t_0}}\big\| \nabla^\mu_{X,z} u\big\|_2, \]
so that $\t\hh\in H^{k+1}$ and $\Lambda V\in H^k$.
The inductive hypotheses are satisfied, so that we know that $v\in H^{k+2}$. Finally, we use the coercivity of the operator (Step 1) with the nth derivative of~\eqref{equ}, and obtain
\[\big\|\partial_z^2 \partial^n u\big\|_2\leq k \big\|\nabla^\mu_{X,z}\cdot P^\mu \nabla^\mu_{X,z}\partial^n u\big\|_2 + \big\|\Delta_X \partial^n u\big\|_2.\]
It follows that $u\in H^{k+3}$, and~\eqref{smoothing} is proved. The interpolation theory leads to the final result: for $s\geq 1/2$, there exists a unique solution $u\in H^{s+3/2}$ of~\eqref{equ}.

\paragraph{Step 3} \emph{$L^2$-estimate on $\nabla_{X,z}^\mu u$.} 
We multiply~\eqref{equ} by $u$, integrate by parts on both sides, and use the boundary conditions to finally obtain
\[\int_\SS\nabla^\mu_{X,z} u\cdot P^\mu\nabla^\mu_{X,z}u=\mu^2\int_\SS\nabla^\mu_{X,z}u\cdot\hh+\int_{\{z=0\}}\nabla u\cdot V .\]
From the coercivity and the Cauchy-Schwarz inequality, we deduce
\[\big\|\nabla^\mu_{X,z}u\big\|_2^2\leq k(\mu^2\big\|\hh\big\|_2\big\|\nabla_{X,z}^\mu u\big\|_2+\big\vert V\big\vert _{H^{1/2}}\big\vert \nabla u_{\vert z=0}\big\vert _{H^{-1/2}}).\]
Then, a trace theorem (see M\'etivier~\cite{Metivier1} pp.23-27) gives
\begin{eqnarray*}
\big\vert \nabla u_{\vert z=0}\big\vert _{H^{-1/2}} &\leq& \mbox{Cst}(\big\|\nabla u\big\|_2+\big\|\Lambda^{-1}\partial_z\nabla u\big\|_2)\\
 &\leq& \mbox{Cst}(\frac{1}{\sqrt\mu}+1)\big\|\nabla_{X,z}^\mu u\big\|_2.
\end{eqnarray*}
This finally gives the estimate
\begin{equation}\label{estHs} \big\|\nabla_{X,z}^\mu u\big\|_2\leq  C(\frac{1}{h},\epsilon_1\big\vert \zeta_1\big\vert _{W^{1,\infty}},\epsilon_2\big\vert \zeta_2\big\vert _{W^{1,\infty}})(\mu^2\big\|\hh\big\|_2+\frac{1+\sqrt\mu}{\sqrt\mu}\big\vert V\big\vert _{H^{1/2}}).\end{equation} 

\paragraph{Step 4} \emph{$L^2$-estimate on $\Lambda^s\nabla^\mu_{X,z}u$ ($s\geq 0$).}
We define $v=\Lambda^s u$. Multiplying~\eqref{equ} by $\Lambda^s$ on both sides, one obtains
\begin{equation}\label{eqv}\left\{\begin{array}{ll}
 \nabla^\mu_{X,z}\cdot P^\mu \nabla^\mu_{X,z}v =\mu^2\ \nabla^\mu_{X,z}\cdot \t\hh & \mbox{in } \RR^d\times(0,1), \\
v =0 & \mbox{on } \{z=1\}, \\
 \partial_n v =\nabla\cdot\Lambda^s V +\mu^2 e_{d+1}\cdot \t\hh & \mbox{on } \{z=0\},
\end{array}
\right.\end{equation}
with $\mu^2\t \hh =\mu^2\Lambda^s \hh -[\Lambda^s,P^\mu]\nabla^\mu_{X,z} u$. We can use Step 3 with $v$ and obtain
\[ \begin{array}{r}\big\|\nabla^\mu_{X,z}v\big\|_2\leq  C(\frac{1}{h},\epsilon_1\big\vert \zeta_1\big\vert _{W^{1,\infty}},\epsilon_2\big\vert \zeta_2\big\vert _{W^{1,\infty}})(\mu^2\big\|\Lambda^s \hh\big\|_2+\big\|[\Lambda^s,P^\mu]\nabla^\mu_{X,z} u\big\|_2\\+\frac{1+\sqrt\mu}{\sqrt\mu}\big\vert V\big\vert _{H^{s+1/2}}).\end{array}\] 
We obtain the commutator estimate thanks to Theorem 6 of~\cite{Lannes3}: for $s>-\frac{d}{2}$ and $t_0>\frac{d}{2}$, one has
\[\big\| [\Lambda^s,f]g\big\|_2 \leq C_{s,t_0} \big\| \nabla f\big\|_{H^{\mathrm{max}\{t_0,s-1\}}}\big\| g\big\|_{H^{s-1}}. \]
In our case, it gives
\[\big\|[\Lambda^s,P^\mu]\nabla^\mu_{X,z} u\big\|_2\leq C_{s,t_0} (\frac{1}{h},\epsilon_1\big\vert \zeta_1\big\vert _{H^{\mathrm{max}\{t_0+2,s+1\}}},\epsilon_2\big\vert \zeta_2\big\vert _{H^{\mathrm{max}\{t_0+2,s+1\}}} )  \big\|\Lambda^{s-1} \nabla^\mu_{X,z} u\big\|_2. \]
We finally get an estimate on $\big\|\Lambda^s\nabla^\mu_{X,z}u\big\|_2$ in terms of $\big\|\Lambda^{s-1}\nabla^\mu_{X,z}u\big\|_2$. Step 3 is the case when $s=0$. By induction, and interpolation when $s\in(0,1)$, we obtain the following relation for all $s\geq 0$:
\begin{equation}\label{estHs1}\begin{array}{r}\big\|\Lambda^s\nabla^\mu_{X,z} u\big\|_2\leq C_{s,t_0} (\frac{1}{h},\epsilon_1\big\vert \zeta_1\big\vert _{H^{\mathrm{max}\{t_0+2,s+1\}}},\epsilon_2\big\vert \zeta_2\big\vert _{H^{\mathrm{max}\{t_0+2,s+1\}}} ) (\mu^2\big\|\Lambda^s \hh\big\|_2\\+\frac{1+\sqrt\mu}{\sqrt\mu}\big\vert V\big\vert _{H^{s+1/2}}).\end{array}\end{equation}

\paragraph{Step 5} \emph{$L^2$-estimate ($s\geq 0$) on $\Lambda^s\partial_z\nabla^\mu_{X,z}u$.}
The equation~\eqref{equ} gives the formula
\[\begin{array}{r}\frac{1+\mu\big\vert \nabla s_1\big\vert ^2}{h_1}\partial_z^2u=\mu^2 \nabla_{X,z}^\mu\cdot \hh-\mu\nabla\cdot(h_1\nabla u-\nabla s_1 \partial_z u)+\mu\partial_z(\nabla s_1\cdot\nabla u)\\-\partial_z(\frac{1+\mu\big\vert \nabla s_1\big\vert ^2}{h_1})\partial_z u,\end{array}\]
from which we deduce
\[\big\|\Lambda^s \partial_z^2 u\big\|_2\leq C(\frac{1}{h},\epsilon_1\big\vert \zeta_1\big\vert _{W^{1,\infty}},\epsilon_2\big\vert \zeta_2\big\vert _{W^{1,\infty}}) (\mu^2\big\|\Lambda^s\nabla^\mu_{X,z}\cdot \hh\big\|_2+ \sqrt\mu\big\|\Lambda^{s+1} \nabla^\mu_{X,z} u\big\|_2) .\]
Thus, we have the estimate
\[\big\|\Lambda^s \partial_z\nabla^\mu_{X,z} u\big\|_2\leq C(\frac{1}{h},\epsilon_1\big\vert \zeta_1\big\vert _{W^{1,\infty}},\epsilon_2\big\vert \zeta_2\big\vert _{W^{1,\infty}})(\mu^2\big\|\Lambda^s\nabla^\mu_{X,z}\cdot \hh\big\|_2+\sqrt\mu \big\|\Lambda^{s+1} \nabla_{X,z}^\mu u\big\|_2),\]
and Step 4 allows us to conclude
\[\begin{array}{r}\big\|\Lambda^s \partial_z\nabla^\mu_{X,z} u\big\|_2\leq C_{s,t_0}(\frac{1}{h},\epsilon_1\big\vert \zeta_1\big\vert _{H^{\mathrm{max}\{t_0+2,s+2\}}},\epsilon_2\big\vert \zeta_2\big\vert _{H^{\mathrm{max}\{t_0+2,s+2\}}})(\mu^2\big\| \hh\big\|_{H^{s+1,1}}\\+ \big\vert V\big\vert _{H^{s+3/2}}).\end{array}\]

\subsection{Proof of the inequalities}
To obtain the first estimate, we remark that
\[G_1(\psi_1,\psi_2)+\mu (\mathcal{A}_1+\mathcal{A}_2)=\partial_n u_{|z=1}-\mu^2 u_0,\]
with $u_0:=|\epsilon_1\nabla\zeta_1|^2\Big(h_1\Delta\psi_1-\epsilon_2\zeta_2\cdot\nabla\psi_1+\nabla\cdot(h_2\nabla\psi_2) \Big)$. It is straightforward to check that
\[ \big\vert u_0 \big\vert_{H^s}\leq C(\frac{1}{h},\beta \big\vert b\big\vert_{H^{s+1}},\epsilon_2\big\vert\zeta_2\big\vert_{H^{s+1}},\epsilon_1\big\vert\zeta_1\big\vert_{H^{s+1}},\big\vert\nabla\psi_1\big\vert_{H^{s+2}},\big\vert\nabla\psi_2\big\vert_{H^{s+2}}),\]
so that we just have to bound  $\big\vert\partial_n u_{|z=1}\big\vert_{H^s}$.
We now use the trace theorem to get
\begin{eqnarray}
 \big\vert\partial_n u_{|z=1}\big\vert_{H^s} &\leq&  C(\frac{1}{h},\epsilon_1\big\vert\zeta_1\big\vert_{W^{1,\infty}},\epsilon_2\big\vert\zeta_2\big\vert_{W^{1,\infty}}) (\mu\big\vert\nabla u_{\vert z=1} \big\vert_{H^s}+\big\vert\partial_z u_{\vert z=1} \big\vert_{H^s}) \nonumber\\
&\leq& C(\frac{1}{h},\epsilon_1\big\vert\zeta_1\big\vert_{W^{1,\infty}},\epsilon_2\big\vert\zeta_2\big\vert_{W^{1,\infty}})(\sqrt\mu\big\|\nabla^\mu_{X,z} u\big\|_{H^{s+1/2,0}}\nonumber\\
&&+\big\|\partial_z\nabla_{X,z}^\mu u\big\|_{H^{s-1/2,0}} ).
\end{eqnarray}
The estimates obtained in Steps 4 and 5, together with~\eqref{esth} and~\eqref{estV}, give immediately the desired result.

To obtain the second estimate, one has to carry on the proof with the higher order approximate solution obtained in \S~\ref{opexpansion}:
\begin{equation*}
\t u:=\phi_1-\phi_1^{app,2},
\end{equation*}
and one would obtain the estimates exactly as above. We omit this technical step.

\paragraph{Acknowledgments}
Ce travail a bénéficié d'une aide de l'Agence Nationale de la Recherche portant la référence ANR-08-BLAN-0301-01.
\bibliographystyle{siam}
\bibliography{Biboceano}

\end{document}